\def\eps{\varepsilon}
\begin{document}
\newtheorem{proofyyy}{Proof of Lemma \ref{lemma:stokes}}

\mainmatter

\title{Stokes phenomenon arising in the confluence of the Gauss hypergeometric equation}
\titlerunning{Stokes phenomenon in the confluent  hypergeometric equation}
\author{Calum Horrobin and Marta Mazzocco}
\authorrunning{C. Horrobin and M. Mazzocco.} 

\institute{University of Birmingham,\\
\email{m.mazzocco@bham.ac.uk}}

\maketitle

\begin{abstract}
In this paper we study the Gauss and Kummer hypergeometric equations in-depth. In particular, we focus on the confluence of two regular singularities of the Gauss hypergeometric equation to produce
the Kummer hypergeometric equation with an irregular singularity at infinity. We show how to pass from solutions with power-like behaviour which are analytic in disks, to solutions with exponential behaviour which are analytic in sectors and have divergent asymptotics. We explicitly calculate the Stokes matrices of the confluent system in terms of the monodromy data, specifically the connection matrices, of the original system around the merging singularities. 
\keywords{Hypergeometric differential equations, asymptotic expansions, confluence, monodromy data.}
\end{abstract}

\section{Introduction}
This paper studies the Gauss hypergeometric differential equation,
\begin{eqnarray} x(1-x) \ \frac{d^{2}y}{dx^{2}} + (\gamma - (\alpha+\beta+1)x) \ \frac{dy}{dx} - \alpha \beta \ y = 0, \label{eq:gauss} \end{eqnarray}
where $x \in \mathbb{C}$, and the Kummer confluent hypergeometric differential equation,
\begin{eqnarray} z \ \frac{d^{2}\tilde{y}}{dz^{2}} + (\gamma - z ) \ \frac{d\tilde{y}}{dz} - \beta \ \tilde{y} = 0, \label{eq:kummer} \end{eqnarray}
where $z \in \mathbb{C}$. 

For brevity, in this paper, these equations are simply called Gauss equation and Kummer equation respectively.

The aim of the paper is to give rigour to the confluence of two regular singularities of the Gauss equation to produce
the Kummer equation with an irregular singularity at infinity. In particular, the monodromy data of the confluent equation (Kummer), including Stokes data, are produced as limits of the monodromy data of the original equation (Gauss) using explicit formulae. 

One of the main difficulties addressed in this paper is how to make sense of the confluence limits by understanding how to pass from the solutions of the original system to the solutions of the confluent system. This is a non-trivial question because it involves passing from a solution with power-like behaviour which converges in a disk to solutions with exponential behaviour which are analytic in a sector and asymptotic to a divergent series. 

The procedure of this paper is based on an existence theorem by Glutsyuk \cite{glutsyuk}. Essentially, this states that there exist certain diagonal matrices $K_{\eps}$ and $K_{-\eps}$ such that the limit,
\[\lim_{\eps \rightarrow 0} K_{-\eps}^{-1} \ C \ K_{\eps},\]
where $C$ is the connection matrix between the merging simple poles of the original system, exists. Moreover, this limit gives one of the Stokes matrices if $\eps\to 0$ is taken along a certain ray. However, this existence theorem does not prescribe how to calculate the diagonal matrices $K_{\eps}$ and $K_{-\eps}$. The main result of this paper is to calculate such diagonal matrices and thus produce both the Stokes matrices in terms of limits of the connection matrix of the original equation explicitly. In particular calculate how one Stokes matrix is produced as limit along a certain ray and the other one by the limit along the opposite ray.

Despite the fact that the analytic theory of the Gauss and Kummer equations has been developed more than a hundred years ago, the question of producing the Stokes data of the Kummer equation in terms of limits of monodromy data of the Gauss one has only been approached rather recently \cite{lambert,watanabe}. In particular, in \cite{watanabe}, the Mellin-Barnes integral representations of the solutions of Kummer equation are produced as limits of the ones for the Gauss equation, and then the Stokes data are deduced from the Mellin-Barnes integral representations (this last calculation is reported here in Appendix B for completeness). 
In  \cite{lambert}, the confluence problem is solved by observing that one of the Fuchsian singularities remains Fuchsian under the confluence, so that the corresponding local fundamental matrix of the Gauss equation admits an analytic limit under the confluence, thus allowing to compute explicitly the monodromy of the Kummer equation around $0$. The Stokes matrices are then determined by the fact that loops around $0$ are homotopic to loops around $\infty$ in the Riemann sphere with two punctures. 

The approach of the current paper does not require closed form expressions such as Mellin-Barnes integrals.  Indeed, in \cite{HM}, we use this procedure to calculate the Stokes matrices of the linear problem associated to the fifth Painlev\'e equation (and its higher order analogues ) in terms of limits of the connection matrix between $1$ and $\infty$ in the linear problem associated to the sixth Painlev\'e equation (and its higher order analogues) for which closed form fundamental matrices are unknown.
 
Another advantage of the procedure of the current paper is that it does not rely on the existence of an additional simple pole which survives the confluence limit, and therefore it can be applied to the confluence from the Bessel differential equation to the Airy one for example, or even more ambitiously, in the confluence from the fifth to the third Painlev\'e equation - this challenging work is postponed to subsequent publications.

This paper is organised as follows: 
In Sections \ref{sec:gg} and \ref{sec:kummer}, the authors remind some background on the Gauss and Kummer hypergeometric differential equations respectively. In Section \ref{sec:hgconf} the confluence procedure is explained, and the main result of this paper, Theorem \ref{main:top} is proved.
In appendices A and B, the classical derivation of the monodromy data for the Gauss and Kummer hypergeometric differential equations respectively are derived using Mellin-Barnes integrals.

\vskip 3mm
{\it This paper is inspired by some of the facets of Nalini's mathematical taste and style because to tackle a seemingly simple problem it requires an unexpected depth that opens a Pandora's box of beautiful mathematical problems. For this reason, we wish to dedicate this paper to her.} [Calum Horrobin and Marta Mazzocco]

\vskip 3mm
{\it I wish to thank Nalini for her friendship of more than twenty years. Throughout her career, Nalini has mentored, supported and sponsored a huge number of 
early career mathematicians, some formally as her PhD students and post docs, others informally, like myself and many others.} [Marta Mazzocco]

\vskip 2mm \noindent{\bf Acknowledgements.} We thank D. Guzzetti for many helpful conversations. This research was supported by the EPSRC Research Grant $EP/P021913/1$ and EPSRC DTA allocation to the Mathematical Sciences Department at Loughborough University.

\section{Gauss hypergeometric differential equation} \label{sec:gg} 

Throughout the paper we work in the non-resonance assumption: $\gamma$, $\gamma-\alpha-\beta$, $\alpha-\beta \not \in \mathbb{Z}$.

To define monodromy data, it is easier to deal with a system of first order ODEs by using the following trivial lemma:

\begin{lemma} \label{lemma:hglemma} Under the assumptions $\alpha \neq 0$, $\gamma \neq \beta \neq 1$ and $\alpha \neq \beta-1$, the matrix 
\begin{eqnarray}Y(x) = \left( \begin{array}{cc} y_{1}(x) & y_{2}(x) \\ \Psi \left( y_{1} , y_{1}' ; x \right) & \Psi \left( y_{2}, y_{2}' ; x \right) \end{array} \right) , \label{eq:Y} \end{eqnarray}
where
\begin{eqnarray} \Psi \left( y_{k}, y_{k}' ; x \right) = \frac{ \alpha \left(\beta - \gamma + (\alpha+1-\beta) x \right) y_{k}(x) + x(x-1)(\alpha+1-\beta) y_{k}'(x)}{\alpha (\beta-1)(\beta-\gamma)}, \label{eq:psi1} \end{eqnarray}
is a fundamental solution of the equation 
\begin{eqnarray}\frac{{\rm d} Y}{{\rm d} x} = \left( \frac{A_{0}}{x} + \frac{A_{1}}{x-1}\right)Y, \label{eq:hg1} \end{eqnarray}
\begin{eqnarray} &\ & A_{0} = \frac{1}{\alpha+1-\beta} \left( \begin{array}{cc} \alpha(\beta-\gamma) & \alpha(1-\beta)(\beta-\gamma) \\ \alpha+1-\gamma & (1-\beta)(\alpha+1-\gamma) \end{array} \right), \nonumber \\
&\ & A_{1} = \frac{1}{\alpha+1-\beta} \left( \begin{array}{cc} \alpha(\gamma-\alpha-1) & \alpha(\beta-1)(\beta-\gamma) \\ \gamma-\alpha-1 & (\beta-1)(\beta-\gamma) \end{array} \right), \nonumber \end{eqnarray}
if and only if $y_{1}(x)$ and $y_{2}(x)$ are linearly independent solutions of Gauss hypergeometric equation (\ref{eq:gauss}).
\end{lemma}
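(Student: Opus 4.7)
The plan is to verify the biconditional by direct calculation, recognising that $y\mapsto (y,\Psi(y,y';x))^{\top}$ is a companion-type gauge transformation engineered precisely so that the scalar Gauss equation (\ref{eq:gauss}) becomes the rational first-order system (\ref{eq:hg1}). The hypotheses $\alpha\neq 0$, $\gamma\neq\beta$, $\beta\neq 1$ and $\alpha\neq\beta-1$ are exactly what is required for the denominator $\alpha(\beta-1)(\beta-\gamma)$ appearing in (\ref{eq:psi1}) and the common factor $\alpha+1-\beta$ appearing in the entries of $A_{0}$ and $A_{1}$ to be non-zero, so that the transformation is invertible.

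For the ``if'' direction, I would fix a scalar solution $y_{k}$ of (\ref{eq:gauss}) and form the column $\vec{u}_{k}(x)=(y_{k}(x),\Psi(y_{k},y_{k}';x))^{\top}$. First I would combine $A_{0}/x+A_{1}/(x-1)$ into a single matrix with rational entries, and write out the top row of the candidate identity $\vec{u}_{k}'=[A_{0}/x+A_{1}/(x-1)]\vec{u}_{k}$; a short algebraic manipulation shows that, when solved for $\Psi$ in terms of $y_{k}$ and $y_{k}'$, this scalar relation reproduces (\ref{eq:psi1}) identically, so the top row holds automatically. The bottom row then reads $\Psi'=[\cdots]y_{k}+[\cdots]\Psi$; substituting the closed-form $\Psi$, differentiating via the product rule, multiplying through by $x(x-1)$, and eliminating $y_{k}''$ using (\ref{eq:gauss}), I expect the equation to collapse to $0=0$, since this is precisely the computation the particular coefficients of $A_{0},A_{1}$ were designed to produce.

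Conversely, if $Y$ of the form (\ref{eq:Y}) satisfies (\ref{eq:hg1}), then the top row of the system expresses $y_{k}'$ as a rational linear combination of $y_{k}$ and $\Psi(y_{k},y_{k}';x)$, while the bottom row gives the derivative of $\Psi$. Differentiating the first relation, substituting the second for $\Psi'$ and the first again to eliminate $\Psi$, one obtains a scalar second-order ODE in $y_{k}$ which, by the same coefficient design, coincides with (\ref{eq:gauss}). Linear independence of $y_{1},y_{2}$ is then inherited from $Y$ being a fundamental solution: $\det Y\not\equiv 0$, and since the second row of $Y$ depends linearly on the first row and its derivative, proportionality of $y_{1}$ and $y_{2}$ would force proportionality of the columns of $Y$, a contradiction. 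The only real obstacle in the whole argument is the algebraic bookkeeping in the bottom-row verification, which is presumably why the authors label the lemma trivial.
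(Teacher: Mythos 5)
The paper itself offers no proof of this lemma (it is dismissed as ``trivial''), and your direct companion-type verification — checking that the top row of the system is an identity equivalent to (\ref{eq:psi1}), reducing the bottom row to the Gauss equation via elimination of $y_k''$, and reversing the elimination for the converse — is exactly the intended argument and is sound. The one detail worth adding is in the forward direction: since $\det Y = \frac{(\alpha+1-\beta)x(x-1)}{\alpha(\beta-1)(\beta-\gamma)}\left(y_{1}y_{2}'-y_{2}y_{1}'\right)$, linear independence of $y_{1},y_{2}$ (nonvanishing Wronskian) is what makes $Y$ genuinely a \emph{fundamental} solution, mirroring the inheritance argument you give only for the converse direction.
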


So, from now on, we stick to the system of first order ODEs \eqref{eq:hg1} .

We define the following disks with chosen branches, as illustrated in Figure \ref{fig:123} below:
\begin{align} \Omega_{0} &= \left\{ x : |x|<1 , \ -\pi \leq \text{arg}(x) < \pi \right\}, \nonumber \\
\Omega_{1} &= \left\{ x : |x-1|<1, \ - \pi \leq \text{arg}(1-x) < \pi \right\}, \nonumber \\
\Omega_{\infty} &= \left\{ x : |x| > 1 , \ -\pi \leq \text{arg}(-x) < \pi \right\}, \nonumber \end{align}

\begin{figure}\begin{center}
\includegraphics[scale=.8]{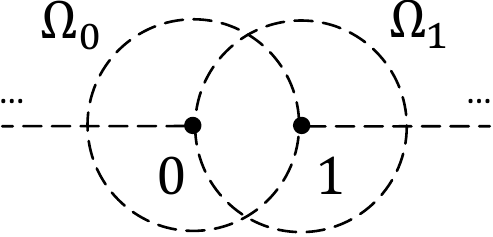}
\caption{\label{fig:123}Chosen disks with branch cuts. Note that  $\Omega_{\infty}$ is a disk in the complement of $\overline\Omega_0\cup \overline\Omega_1$.}
\end{center} \end{figure} 

It is well-known that the solutions of equation (\ref{eq:gauss}) are expressible in terms of Gauss hypergeometric $\ _{2}F_{1}$ series, in particular the following 
 three pairs of linearly independent local solutions $y_{1}^{(k)}(x)$ and $y_{2}^{(k)}(x)$ of (\ref{eq:gauss}) defined in the neighbourhoods $\Omega_{k}$ form a basis around around each singular point: 
\begin{align} &\begin{matrix*}[l] y_{1}^{(0)}(x) = x^{1-\gamma} \ _{2}F_{1} \left( \begin{array}{c} \alpha+1-\gamma , \ \beta+1-\gamma \\ 2-\gamma \end{array} ; x \right), \\ y_{2}^{(0)}(x) = \ _{2}F_{1} \left(\begin{array}{c} \alpha , \ \beta \\ \gamma \end{array} ; x \right), \end{matrix*} &&x \in \Omega_{0}, \label{eq:y0} \\
&\begin{matrix*}[l] y_{1}^{(1)}(x) = (1-x)^{\gamma-\alpha-\beta} \ _{2}F_{1} \left(\begin{array}{c} \gamma-\alpha , \ \gamma-\beta \\ \gamma+1-\alpha-\beta \end{array} ; 1-x \right), \\
y_{2}^{(1)}(x) = \ _{2}F_{1} \left(\begin{array}{c} \alpha , \ \beta \\ \alpha+\beta+1-\gamma \end{array} ; 1-x \right), \end{matrix*} &&x \in \Omega_{1}, \label{eq:y1} \\
&\begin{matrix*}[l] y_{1}^{(\infty)}(x) = (-x)^{-\alpha} \ _{2}F_{1} \left( \begin{array}{c} \alpha , \ \alpha+1-\gamma \\ \alpha+1-\beta \end{array} ; x^{-1} \right), \\
y_{2}^{(\infty)}(x) = (-x)^{-\beta} \ _{2}F_{1} \left( \begin{array}{c} \beta , \ \beta+1-\gamma \\ \beta+1-\alpha \end{array} ; x^{-1} \right), \end{matrix*} &&x \in \Omega_{\infty}. \label{eq:yinf} \end{align}

\begin{lemma} \label{lemma:cup} The following local fundamental solutions of the matrix hypergeometric equation (\ref{eq:hg1}) have the following form
\begin{align} Y^{(0)}(x) &= R_{0} G_{0}(x) x^{\Theta_{0}}, &&x \in \Omega_{0}, \label{eq:ak} \\
Y^{(1)}(x) &= R_{1} G_{1}(x) (1-x)^{\Theta_{1}}, &&x \in \Omega_{1}, \label{eq:ak2} \\ 
Y^{(\infty)}(x) &= R_{\infty} G_{\infty}(x) (-x)^{-\Theta_{\infty}}, &&x \in \Omega_{\infty}, \label{eq:infinity} \end{align}
where $R_{k}$ and $\Theta_{k}$ are the following matrices:
\begin{align} R_{0} = \left(\begin{array}{cc} 1 & 1 \\ \frac{\alpha+1-\gamma}{\alpha(\beta-\gamma)} & \frac{1}{\beta-1} \end{array} \right), \ R_{1} = \left(\begin{array}{cc} 1 & 1 \\ \frac{1}{\alpha} & \frac{\alpha+1-\gamma}{(\beta-1)(\beta-\gamma)} \end{array} \right), \ R_{\infty} = \left( \begin{array}{cc} 1 & 0 \\ 0 & \frac{(\beta-\alpha)(\alpha+1-\beta)}{\alpha(\beta-1)(\beta-\gamma)} \end{array} \right), \nonumber \end{align}
\begin{eqnarray} \Theta_{0} = \left(\begin{array}{cc} 1-\gamma & 0 \\ 0 & 0 \end{array} \right) , \ \Theta_{1} = \left(\begin{array}{cc} \gamma-\alpha-\beta & 0 \\ 0 & 0 \end{array} \right) , \ \Theta_{\infty} = \left(\begin{array}{cc} \alpha & 0 \\ 0 & \beta-1 \end{array} \right), \nonumber \end{eqnarray}
which satisfy $R_{k}^{-1}A_{k}R_{k} = \Theta_{k}$, and $G_{k}(x)$ are the following series: \\

$G_{0}(x) = \left( \begin{matrix*}[l] \ _{2}F_{1} \left( \begin{array}{c} \alpha+1-\gamma,\ \beta-\gamma \\ 1-\gamma \end{array} ;x\right) \text{\LARGE ,} \\ \frac{x(\alpha+1-\gamma)(1-\beta)}{(1-\gamma)(2-\gamma)} \ _{2}F_{1} \left(\begin{array}{c} \alpha+2-\gamma, \  \beta+1-\gamma \\ 3-\gamma \end{array} ; x \right) \text{\LARGE ,} \end{matrix*} \right.$ \\
\begin{flushright}$\left. \begin{matrix*}[r] \frac{x \alpha (\gamma-\beta)}{\gamma(\gamma-1)} \ \ _{2}F_{1} \left(\begin{array}{c} \alpha+1, \ \beta \\ \gamma+1 \end{array} ; x\right) \\ \ _{2}F_{1} \left(\begin{array}{c}\alpha , \ \beta-1 \\ \gamma - 1 \end{array} ; x \right) \end{matrix*} \right),$ \end{flushright}

$G_{1}(x) = \left( \begin{matrix*}[l] \ _{2}F_{1} \left(\begin{array}{c}\gamma-\alpha-1 , \ \gamma-\beta \\ \gamma-\alpha-\beta \end{array} ; 1-x \right) \text{\LARGE ,} \\ \frac{(1-x)(\beta-1)(\beta-\gamma)}{(\alpha+\beta-\gamma-1)(\alpha+\beta-\gamma)} \ _{2}F_{1}\left(\begin{array}{c} \gamma-\alpha , \ \gamma+1-\beta \\ \gamma+2-\alpha-\beta \end{array} ; 1-x \right) \text{\LARGE ,} \end{matrix*} \right.$ \\
\begin{flushright} $\left. \begin{matrix*}[r] \frac{(1-x)\alpha(\alpha+1-\gamma)}{(\alpha+\beta-\gamma)(\alpha+\beta+1-\gamma)} \ _{2}F_{1} \left( \begin{array}{c} \alpha+1 , \ \beta \\ \alpha+\beta+2-\gamma \end{array} ; 1-x \right) \\ \ _{2}F_{1} \left( \begin{array}{c} \alpha , \ \beta-1 \\ \alpha+\beta-\gamma \end{array} ; 1-x \right) \end{matrix*} \right),$ \end{flushright}

$G_{\infty}(x) = \left( \begin{matrix*}[l] \ _{2}F_{1} \left(\begin{array}{c} \alpha , \ \alpha+1-\gamma \\ \alpha+1-\beta \end{array} ; x^{-1} \right) \text{\LARGE ,} \\ \frac{\alpha(\beta-1)(\beta-\gamma)(\gamma-\alpha-1)}{(\alpha-\beta)(\alpha+1-\beta)^{2}(\alpha+2-\beta)}\frac{1}{x} \ _{2}F_{1} \left( \begin{array}{c} \alpha+1, \ \alpha+2-\gamma \\ \alpha+3-\beta \end{array} ; x^{-1} \right) \text{\LARGE ,} \end{matrix*} \right.$ \\
\begin{flushright} $\left. \begin{matrix*}[r] -\frac{1}{x} \ _{2}F_{1} \left( \begin{array}{c} \beta , \ \beta+1-\gamma \\ \beta+1-\alpha \end{array} ; x^{-1} \right) \\ \ _{2}F_{1} \left(\begin{array}{c} \beta-1 , \ \beta-\gamma \\ \beta-\alpha-1 \end{array} ; x^{-1} \right) \end{matrix*} \right).$ \end{flushright} \end{lemma}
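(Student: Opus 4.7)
The plan is to combine Lemma~\ref{lemma:hglemma} with the classical local scalar solutions (\ref{eq:y0})--(\ref{eq:yinf}) and identify the resulting fundamental matrix of the system (\ref{eq:hg1}) with the factorizations (\ref{eq:ak}), (\ref{eq:ak2}), (\ref{eq:infinity}). Broadly the proof splits into three parts: (a) verify the purely algebraic statement $R_k^{-1}A_kR_k=\Theta_k$, where one sets $A_\infty := -(A_0+A_1)$; (b) invoke Frobenius theory under the non-resonance hypothesis to secure a local fundamental solution of the claimed shape; (c) identify the entries of $G_k(x)$ explicitly using Lemma~\ref{lemma:hglemma} and hypergeometric contiguous relations.

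Part (a) is a direct $2\times 2$ calculation. One checks $\mathrm{tr}(A_0)=1-\gamma$, $\det(A_0)=0$ and that the two columns of $R_0$ are eigenvectors of $A_0$ with eigenvalues $1-\gamma$ and $0$; analogous computations cover $A_1$ versus $\Theta_1$ and $A_\infty$ versus $\Theta_\infty$. For part (b), the hypothesis $\gamma,\gamma-\alpha-\beta,\alpha-\beta \notin \mathbb{Z}$ makes the eigenvalue differences of each $\Theta_k$ non-integer, so the standard Frobenius construction produces at each singularity a unique local fundamental solution of the desired form, with $G_k$ holomorphic near the singular point and normalized to the identity there. A glance at the displayed series shows immediately that the $G_k(x)$ of the statement do satisfy $G_0(0)=I$, $G_1(1)=I$ and the analogous normalization as $x\to\infty$.

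Part (c) is the heart of the proof. By Lemma~\ref{lemma:hglemma}, the first row of a fundamental matrix of (\ref{eq:hg1}) consists of two linearly independent solutions of the scalar Gauss equation (\ref{eq:gauss}), and the second row is obtained from them by applying the operator $\Psi(\cdot,\cdot\,';x)$ in (\ref{eq:psi1}). Reading off the top row of $R_k G_k(x)(x-x_k)^{\Theta_k}$ using the top row of $R_k$ together with the leading behaviour prescribed by $\Theta_k$, I would identify the two first-row entries as explicit linear combinations of the classical local solutions (\ref{eq:y0})--(\ref{eq:yinf}). The content of the lemma is then that applying $\Psi$ to these expressions produces exactly the bottom row of $R_k G_k(x)(x-x_k)^{\Theta_k}$ as tabulated; this reduces to a hypergeometric identity which I would verify using the derivative rule $\frac{d}{dx}\,{}_2F_1(a,b;c;x) = \frac{ab}{c}\,{}_2F_1(a+1,b+1;c+1;x)$ together with Gauss's contiguous relations, so as to recombine the $y_k$ and $x(x-1)y_k'$ pieces of (\ref{eq:psi1}) into the shifted $_2F_1$ series appearing in $G_k$. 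The cases at $1$ and $\infty$ follow by the same manipulation in the local coordinate $1-x$ and $x^{-1}$, respectively.

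The main obstacle is purely computational and sits in part (c): running the contiguous-relation bookkeeping cleanly in all three singular charts and tracking the rational prefactors in $R_k$ and $G_k$. The non-resonance hypothesis guarantees that none of those denominators vanishes, so the identifications are unambiguous, and no conceptual difficulty beyond careful algebra is anticipated.
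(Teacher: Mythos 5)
Your plan is correct and is essentially the second route the paper itself indicates: substituting the classical local scalar solutions (\ref{eq:y0})--(\ref{eq:yinf}) into the matrix form (\ref{eq:Y}) of Lemma~\ref{lemma:hglemma} and matching the result with $R_kG_k(x)(\cdot)^{\Theta_k}$ via the derivative formula and Gauss's contiguous relations, with the eigenvector check $R_k^{-1}A_kR_k=\Theta_k$ done directly. The Frobenius/uniqueness step you add is harmless (it is the backbone of the paper's alternative ``Birkhoff normal form'' route) but not needed once the direct identification is carried out.
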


\begin{proof} This result can be proved in two ways: either by reducing equation (\ref{eq:hg1}) to Birkhoff normal form near each singularity and computing the corresponding gauge transformations $R_0G_0(x)$,  $R_1G_1(x)$ and $G_\infty(x)$ recursively or  by direct substitution of the local solutions (\ref{eq:y0})-(\ref{eq:yinf}) into expression (\ref{eq:Y}) and using Gauss contiguous relations. \end{proof}

\begin{remark} \label{remark:lm} The matrices $R_{k}$, $k=0,1$ and $\infty$, in the above solutions (\ref{eq:ak}), (\ref{eq:ak2}) and (\ref{eq:infinity}) have been chosen to satisfy $R_{k}^{-1}A_{k}R_{k} = \Theta_{k}$, where $A_{\infty} := -A_{0}-A_{1}$. The matrices $G_0,G_1,G_\infty$ have leading term given by the identity.\end{remark}

We now define the monodromy data of Gauss hypergeometric equation (\ref{eq:gauss}) and recall how to express them in explicit form \cite{bateman, ww}. In appendix A we 
derive these classical formulae by following the approach of representing solutions using Mellin-Barnes integrals.

When defining local solutions, we have been specific about identifying which sheet of the Riemann surface of the logarithm we are restricting our local solutions to at each singular point. We may extend the definitions of our local fundamental solutions $Y^{(k)}(x)$ to other sheets $e^{2 m \pi i} \Omega_{k}$, $k=0,1,\infty$, by analytically continuing along a closed loop encircling the singularity $x=0,1,\infty$. This action simply means that our solution becomes multiplied by the corresponding exponent $e^{2 m \pi i \Theta_{k}}$, for $k=0,1$ and $\infty$, $m \in \mathbb{Z}$. Note that, for $k=0$ and $1$, the analytic continuation of $Y^{(k)}(x)$ around its singularity in the positive direction means $m>0$ in the previous sentence; while, for $k=\infty$, it means $m < 0$. The diagonal matrices $e^{2 \pi i \Theta_{k}}$ are called the local monodromy exponents of the singularities. \\

We proceed with the global analysis of solutions. Let $Y^{(0)}(x)$, $Y^{(1)}(x)$ and $Y^{(\infty)}(x)$ be the fundamental solutions of the hypergeometric equation as defined in the previous section. Denote by $\gamma_{j,k}\left[Y^{(j)}\right](x)$ the analytic continuation of $Y^{(j)}(x)$ along an orientable curve $\gamma_{j,k} : [0,1] \rightarrow \mathbb{C}$ with $\gamma_{j,k}(0) \in \Omega_{j}$ and $\gamma_{j ,k}(1) \in \Omega_{k}$, for $j,k=0,1,\infty$. We have the following connection formulae (see Appendix A for the detailed derivation of these): 
\begin{align} \gamma_{j,k}\left[Y^{(j)}\right](x) = Y^{(k)}(x) C^{kj}, \label{eq:cont} \end{align}
where: 
\begin{align} C^{0 \infty} &= \left(\begin{array}{cc} e^{i \pi (\gamma-1)} \frac{\Gamma(\alpha+1-\beta)\Gamma(\gamma-1)}{\Gamma(\alpha)\Gamma(\gamma-\beta)} & e^{i \pi(\gamma-1)} \frac{\Gamma(\beta+1-\alpha)\Gamma(\gamma-1)}{\Gamma(\beta)\Gamma(\gamma-\alpha)} \\ \frac{\Gamma(\alpha+1-\beta)\Gamma(1-\gamma)}{\Gamma(1-\beta)\Gamma(\alpha+1-\gamma)} & \frac{\Gamma(\beta+1-\alpha)\Gamma(1-\gamma)}{\Gamma(1-\alpha)\Gamma(\beta+1-\gamma)} \end{array} \right), \label{eq:ci0} \\
C^{1 \infty} &= \left(\begin{array}{cc} e^{i \pi (\gamma-\beta)} \frac{\Gamma(\alpha+1-\beta)\Gamma(\alpha+\beta-\gamma)}{\Gamma(\alpha)\Gamma(\alpha+1-\gamma)} & e^{i \pi (\gamma-\alpha)} \frac{\Gamma(\beta+1-\alpha) \Gamma(\alpha+\beta-\gamma)}{\Gamma(\beta)\Gamma(\beta+1-\gamma)} \\ e^{i \pi \alpha} \frac{\Gamma(\alpha+1-\beta)\Gamma(\gamma-\alpha-\beta)}{\Gamma(1-\beta)\Gamma(\gamma-\beta)} & e^{i \pi \beta} \frac{\Gamma(\beta+1-\alpha)\Gamma(\gamma-\alpha-\beta)}{\Gamma(1-\alpha)\Gamma(\gamma-\alpha)} \end{array} \right), \label{eq:ci1} \\
C^{0 1} &= \left(\begin{array}{cc} \frac{\Gamma(\gamma+1-\alpha-\beta)\Gamma(\gamma-1)}{\Gamma(\gamma-\alpha)\Gamma(\gamma-\beta)} & \frac{\Gamma(\alpha+\beta+1-\gamma)\Gamma(\gamma-1)}{\Gamma(\alpha)\Gamma(\beta)} \\ \frac{\Gamma(\gamma+1-\alpha-\beta)\Gamma(1-\gamma)}{\Gamma(1-\alpha)\Gamma(1-\beta)} & \frac{\Gamma(\alpha+\beta+1-\gamma)\Gamma(1-\gamma)}{\Gamma(\alpha+1-\gamma)\Gamma(\beta+1-\gamma)} \end{array} \right). \label{eq:c10} \end{align}

We choose to normalise the monodromy data of Gauss hypergeometric equation with the fundamental solution $Y^{(\infty)}(x)$. Denote by $\gamma_{k}\left[Y^{(\infty)}\right](x)$ the analytic continuation of $Y^{(\infty)}(x)$ along an orientable, closed curve $\gamma_{k} : [0,1] \rightarrow \mathbb{C}$ with $\gamma_{k}(0) = \gamma_{k}(1) \in \Omega_{\infty}$, $k=0,1$, which encircles the singularity $x=0,1$ respectively in the positive (anti-clockwise) direction. The curves $\gamma_{0}$ and $\gamma_{1}$ are illustrated in Figure \ref{fig:hgloops} below, note that $\gamma_{\infty} := \gamma_{1}^{-1}\gamma_{0}^{-1}$. We have: 
\begin{align} \gamma_{k}\left[Y^{(\infty)}\right](x) = Y^{(k)}(x) M_{k}, \quad \quad k=0,1,\infty, \nonumber \end{align}
where,
\begin{align} M_{0} = \left(C^{0 \infty}\right)^{-1} e^{2 \pi i \Theta_{0}} C^{0 \infty}, \quad M_{1} = \left(C^{1 \infty}\right)^{-1} e^{2 \pi i \Theta_{1}} C^{1 \infty}, \quad M_{\infty} = e^{2 \pi i \Theta_{\infty}}. \label{eq:hgm} \end{align}
These matrices satisfy the cyclic relation, 
\begin{align} M_{\infty}M_{1}M_{0} = I. \label{eq:cyc1} \end{align}

\begin{figure}[H] \begin{center}
\includegraphics[scale=0.6]{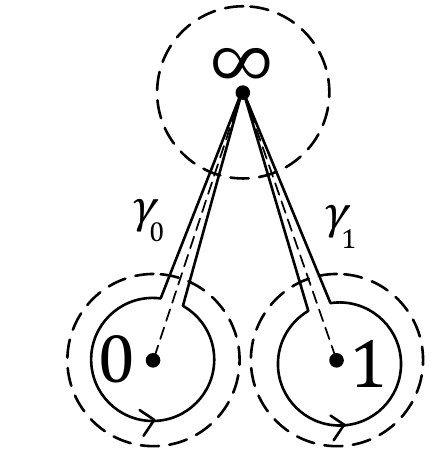}
\caption{\label{fig:hgloops}Curves defining the monodromy matrices $M_{k}$ of Gauss hypergeometric differential equation.}
\end{center} \end{figure} 

\begin{definition}We define the monodromy data of Gauss hypergeometric equation (\ref{eq:gauss}) as the set, 
\begin{align} \mathcal{M} := \left\{ \left(M_{0},M_{1},M_{\infty}\right) \in \left(\text{GL}_{2}(\mathbb{C})\right)^{3} \left| \begin{array}{c} M_{\infty}M_{1}M_{0} = I, \ M_{\infty} = e^{2 \pi i \Theta_{\infty}} \\ \text{eigenv}(M_{k}) = e^{2 \pi i \Theta_{k}}, \text{ k=0,1} \end{array} \right. \right\} _{\text{\Huge/\normalsize GL$_{2}(\mathbb{C})$}} \label{eq:hgmono} \end{align}
where eigenv$(M_{k}) = e^{2 \pi i \Theta_{k}}$ means that the eigenvalues of $M_{k}$ are given as the elements of the diagonal matrix $e^{2 \pi i \Theta_{k}}$ and the quotient is by global conjugation by a diagonal matrix. \end{definition}

\section{Kummer confluent hypergeometric equation} \label{sec:kummer}
We use $z$ as the variable of Kummer confluent hypergeometric equation, we also write tilde above some of the functions and parameters to distinguish from the Gauss hypergeometric equation. We recall the following,
\begin{lemma} \label{lemma:l2} Under the assumption $(\beta-1)(\beta-\gamma) \neq 0$, the matrix
\begin{eqnarray} \widetilde{Y}(z) = \left( \begin{array}{cc} \tilde{y}_{1}(z) & \tilde{y}_{2}(z) \\ \widetilde{\Psi}\left(\tilde{y}_{1},\tilde{y}_{1}';z\right) & \widetilde{\Psi}\left(\tilde{y}_{2},\tilde{y}_{2}';z\right) \end{array} \right), \label{eq:y2} \end{eqnarray}
where,
\[\widetilde{\Psi}\left(\tilde{y}_{k} , \tilde{y}_{k}' ; z \right) = \frac{\left(z + \beta - \gamma \right)\tilde{y}_{k}(z) - z \tilde{y}_{k}'(z)}{(\beta-1)(\beta-\gamma)}, \]
is a fundamental solution of the equation
\begin{eqnarray}
 \frac{\partial \widetilde{Y}}{\partial z} = \left( \left(\begin{array}{cc} 1 & 0 \\ 0 & 0 \end{array} \right) + \frac{\widetilde{A}_{0}}{z}\right) \widetilde{Y}, \text{ where } \widetilde{A}_{0} = \left( \begin{array}{cc} \beta-\gamma & (1-\beta)(\beta-\gamma) \\ 1 & 1-\beta \end{array} \right), \label{eq:chg1} 
\end{eqnarray}
if and only if $\tilde{y}_{1}(z)$ and $\tilde{y}_{2}(z)$ are linearly independent solutions of Kummer confluent hypergeometric equation (\ref{eq:kummer}), 
\begin{align} z \ \tilde{y}'' + (\gamma-z) \ \tilde{y}' - \beta \ \tilde{y}=0. \nonumber \end{align}\end{lemma}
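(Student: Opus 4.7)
The plan is to mimic exactly the column-wise verification behind the ``trivial'' Lemma \ref{lemma:hglemma}. I would first write the matrix ODE \eqref{eq:chg1} on a generic column $(\tilde y_k,\widetilde\Psi_k)^T$ of $\widetilde Y$. This unpacks into two scalar equations. The first is algebraic in $\widetilde\Psi_k$ once $\tilde y_k$ and $\tilde y_k'$ are known, and solving it — which requires exactly the non-degeneracy hypothesis $(\beta-1)(\beta-\gamma)\neq 0$ to divide through the off-diagonal entry $(1-\beta)(\beta-\gamma)/z$ — reproduces precisely the formula for $\widetilde\Psi$ given in the statement. Thus this first row becomes a tautology once $\widetilde\Psi_k$ has been defined as in the lemma.

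The heart of the computation is then the second scalar equation. Differentiating the closed expression for $\widetilde\Psi_k$ in terms of $\tilde y_k,\tilde y_k'$, substituting into $\widetilde\Psi_k' = \tilde y_k/z + (1-\beta)\widetilde\Psi_k/z$, and collecting terms in $\tilde y_k,\tilde y_k',\tilde y_k''$ should reduce this second equation to Kummer's equation $z\tilde y_k''+(\gamma-z)\tilde y_k'-\beta\tilde y_k=0$ after multiplication by the nonzero constant $(\beta-1)(\beta-\gamma)$. This is a short, mechanical check; the same computation read in reverse supplies the converse direction, namely that any two Kummer solutions plus the prescribed $\widetilde\Psi$ yield a column satisfying \eqref{eq:chg1}.

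Finally, to upgrade from columns to a fundamental matrix, I would compute $\det\widetilde Y=\tilde y_1\widetilde\Psi_2-\tilde y_2\widetilde\Psi_1$ directly from the definition of $\widetilde\Psi$. The $(z+\beta-\gamma)$ contributions cancel, leaving a scalar multiple of the Wronskian, $\det\widetilde Y = -z\,W[\tilde y_1,\tilde y_2]/((\beta-1)(\beta-\gamma))$. Under the hypothesis and for $z\neq 0$, this is nonzero iff $\tilde y_1,\tilde y_2$ are linearly independent, completing the equivalence. The only point requiring any real care throughout is bookkeeping of the prefactor $(\beta-1)(\beta-\gamma)$, which appears both in $\widetilde\Psi$ and in the determinant and is precisely the quantity the hypothesis forbids from vanishing; there is no deeper obstacle.
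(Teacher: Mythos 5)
Your proposal is correct: the first row of \eqref{eq:chg1} indeed forces the stated formula for $\widetilde{\Psi}$, the second row reduces (after multiplying by $z(\beta-1)(\beta-\gamma)$) to $-z\bigl[z\tilde y_k''+(\gamma-z)\tilde y_k'-\beta\tilde y_k\bigr]=0$, and $\det\widetilde Y=-z\,W[\tilde y_1,\tilde y_2]/((\beta-1)(\beta-\gamma))$ ties invertibility to linear independence. This direct column-by-column verification is exactly the routine check the paper leaves implicit (it states the lemma without proof, as with the analogous Lemma \ref{lemma:hglemma}), so your argument matches the intended approach.
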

Kummer confluent hypergeometric equation (\ref{eq:kummer}) has one Fuchsian singularity at $z=0$, since $\frac{\gamma-z}{z}$ and $\frac{-\beta}{z}$ have simple poles at $z=0$, and an irregular singularity at $z=\infty$ of Poincar\'{e} rank one. The exponents of the singularity $z=0$ are $1-\gamma$ and $0$ and at $z=\infty$ are $\gamma-\beta$ and $\beta-1$. We make the non-resonance assumption $\gamma \notin \mathbb{Z}$. 

\subsubsection{Local behaviour of the solutions} \label{sec:kumsol}
Kummer confluent hypergeometric equation has an irregular singularity at $z=\infty$ of Poincar\'{e} rank one and, as such, solutions around this point exhibit Stokes phenomenon. In this sub-section, we will state some definitions and theorems which precisely describe fundamental solutions of Kummer equation at the irregular point and the monodromy data, including Stokes matrices.

We first fix the pair of linearly independent local solutions of (\ref{eq:kummer}) as follows:
\begin{align}
 &\begin{matrix*}[l] \tilde{y}_{1}^{(0)}(z) = z^{1-\gamma} \ _{1}F_{1} \left( \begin{array}{c} \beta+1-\gamma \\ 2-\gamma \end{array} ; z \right), \\ \tilde{y}_{2}^{(0)}(z) = \ _{1}F_{1} \left( \begin{array}{c} \beta \\ \gamma \end{array} ; z \right), \end{matrix*} &&z \in \widetilde{\Omega}_{0}. \label{eq:yt0}
 \end{align} 
 where 
 \[\widetilde{\Omega}_{0} := \left\{z: - \frac{3}{2}\pi \leq \text{arg}(z) <  \frac{\pi}{2}\right\},\]
 is a punctured disk around $0$ with branch cut along the positive imaginary axis.
 
In terms of the linear system \eqref{eq:y2}, these solutions correspond to the following local fundamental solution of the matrix hypergeometric equation (\ref{eq:chg1}):
\begin{align} 
\widetilde{Y}^{(0)}(z) = \widetilde{R}_{0} H_{0}(z) z^{\widetilde{\Theta}_{0}}, &&z \in \widetilde{\Omega}_{0}, \label{eq:aroundzero} 
\end{align}
where $\widetilde{R}_{0}$ and $\widetilde{\Theta}_{0}$ are the following matrices:
\begin{eqnarray} 
\widetilde{R}_{0} = \left( \begin{array}{cc} 1 & 1 \\ \frac{1}{\beta-\gamma} & \frac{1}{\beta-1} \end{array} \right) \quad \text{and} \quad \widetilde{\Theta}_{0} = \left(\begin{array}{cc} 1-\gamma & 0 \\ 0 & 0 \end{array} \right), \nonumber 
\end{eqnarray}
which satisfy $\widetilde{R}_{0}^{-1} \widetilde{A}_{0} \widetilde{R}_{0} = \widetilde{\Theta}_{0}$, and $H_{0}(z)$ is the following series:
$$
H_{0}(z) = \left(\begin{array}{cc} _{1}F_{1} \left( \begin{array}{c} \beta-\gamma \\ 1-\gamma \end{array} ;z\right) & \frac{z (\gamma-\beta)}{\gamma(\gamma-1)} \ \ _{1}F_{1} \left(\begin{array}{c} \beta \\ \gamma+1 \end{array} ; z\right) \\ & \\ \frac{z(1-\beta)}{(1-\gamma)(2-\gamma)} \ _{1}F_{1} \left(\begin{array}{c} \beta+1-\gamma \\ 3-\gamma \end{array} ; z \right) & _{1}F_{1} \left(\begin{array}{c}\beta-1 \\ \gamma - 1 \end{array} ; z \right) \end{array} \right).
$$

We now turn our attention to the irregular singularity $z=\infty$. 

\begin{definition} 
The rays $\{z : \text{Re}(z) = 0 , \ \text{Im}(z) >0\}$ and $\{z : \text{Re}(z) = 0 , \ \text{Im}(z) <0\}$ are called the Stokes rays of Kummer equation (\ref{eq:kummer}). 
\end{definition}

We note that these rays constitute the borderline where the behaviour of $e^{z}$ changes, as $z \rightarrow \infty$; that is to say, on one side of each of these rays we have $e^{z} \rightarrow 0$, whereas on the other side of each ray we have $e^{z} \rightarrow \infty$. This is a key aspect of Stokes phenomenon and plays a role in understanding the following classical theorem. 
\begin{theorem} \label{theorem:kummerst} 
Let  
\[\widetilde{\Sigma}_{k} = \left\{ z : -\frac{\pi}{2} < \text{arg}(z) - k \pi < \frac{3\pi}{2} \right\}.\]
For all $k \in \mathbb{Z}$, there exists a solution $\widetilde{Y}^{(\infty,k)}(z)$ of equation (\ref{eq:chg1}) analytic in the sector $\widetilde{\Sigma}_{k}$ such that,
\begin{align} \widetilde{Y}^{(\infty,k)}(z) \sim \widetilde{R}_{\infty}\left(\sum_{n=0}^{\infty}h_{n,\infty}z^{-n}\right) \left(\begin{array}{cc} e^{z} z^{\beta-\gamma} & 0 \\ 0 & z^{1-\beta} \end{array} \right), \quad \quad \text{as } z \rightarrow \infty, \ z \in \widetilde{\Sigma}_{k}, \label{eq:chgasy} \end{align}
where $\widetilde{R}_{\infty}$ is the following matrix,
\begin{eqnarray} \widetilde{R}_{\infty} = \left(\begin{array}{cc} 1 & 0 \\ 0 & \frac{-1}{(\beta-1)(\beta-\gamma)} \end{array} \right), \nonumber \end{eqnarray}
and $H_{\infty}(z)$ is the following series
$$
H_{\infty}(z) = \left( \begin{array}{cc} \ _{2}F_{0} \left(1-\beta , \gamma-\beta ; z^{-1}\right) & \frac{-1}{z} \ _{2}F_{0} \left(\beta , \beta+1-\gamma ; -z^{-1} \right) \\ \frac{(1-\beta)(\beta-\gamma)}{z} \ _{2}F_{0} \left( 2-\beta , \gamma+1-\beta ; z^{-1} \right) & \ _{2}F_{0} \left(\beta-1, \beta-\gamma ; -z^{-1} \right) \end{array} \right).
$$
Moreover, each solution $\widetilde{Y}^{(\infty,k)}(z)$ is uniquely specified by the relation (\ref{eq:chgasy}). \end{theorem}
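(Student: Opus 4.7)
The plan is to establish formal solvability, then lift to analytic solutions on the sectors $\widetilde{\Sigma}_k$, and finally prove uniqueness. For the formal step, I would substitute the ansatz $\widetilde{Y}(z) = \widetilde{R}_\infty H(z) \, \mathrm{diag}(e^z z^{\beta-\gamma}, z^{1-\beta})$ with $H(z) = I + \sum_{n\geq 1} h_n z^{-n}$ into (\ref{eq:chg1}) and match coefficients of $z^{-n}$. Because $\widetilde{R}_\infty$ is diagonal, it commutes with the leading matrix $\Lambda := \mathrm{diag}(1,0)$ and preserves the diagonal of $\widetilde{A}_0$, so the order-$z^{-(n+1)}$ equation takes the form $[\Lambda, h_{n+1}] = P_n(h_0, \ldots, h_n)$ for a linear $P_n$ depending on $\widetilde{R}_\infty^{-1} \widetilde{A}_0 \widetilde{R}_\infty$ and on $D := \mathrm{diag}(\beta-\gamma, 1-\beta)$. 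Since $\Lambda$ has two distinct eigenvalues, the commutator $[\Lambda, \cdot]$ inverts on off-diagonal entries: this determines the off-diagonal of $h_{n+1}$ at each step, while the diagonal part of the same equation (where the commutator vanishes) pins down the diagonal of $h_n$. Iterating from $h_0 = I$ gives a unique formal series with identity leading term, and a direct check via Gauss's contiguous relations would then confirm that this series coincides with the matrix of $_2F_0$'s displayed in the statement.

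For the analytic step, I would invoke the classical Hukuhara--Turrittin existence theorem (in its Ramis refinement) for systems of Poincar\'e rank one with simple leading spectrum. The Stokes rays are the directions where $|e^z|$ neither grows nor decays, namely $\arg z = \pm\pi/2$. The sector $\widetilde{\Sigma}_k$ has opening $2\pi$, is bounded on both sides by the same Stokes ray on adjacent sheets of the Riemann surface of $\log z$, and contains the opposite Stokes ray in its interior. The formal series $H_\infty(z)$ is Gevrey-$1$ by standard growth estimates on the recurrence above, hence $1$-summable in every direction of $\widetilde{\Sigma}_k$ different from the interior Stokes ray; the resummations in all such directions glue along overlaps into a single holomorphic $H_k(z)$ on $\widetilde{\Sigma}_k$ asymptotic to $H_\infty(z)$ uniformly on each closed subsector. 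Setting $\widetilde{Y}^{(\infty,k)}(z) := \widetilde{R}_\infty H_k(z) \, \mathrm{diag}(e^z z^{\beta-\gamma}, z^{1-\beta})$ then gives the required analytic solution.

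For uniqueness, I would note that any two fundamental solutions $\widetilde{Y}_1, \widetilde{Y}_2$ of (\ref{eq:chg1}) satisfying (\ref{eq:chgasy}) on $\widetilde{\Sigma}_k$ are related by $\widetilde{Y}_1 = \widetilde{Y}_2 C$ for a constant $C \in \mathrm{GL}_2(\mathbb{C})$, and the matching asymptotics force $E(z) C E(z)^{-1} \to I$ as $z \to \infty$ in $\widetilde{\Sigma}_k$, where $E(z) = \mathrm{diag}(e^z z^{\beta-\gamma}, z^{1-\beta})$. Since $\widetilde{\Sigma}_k$ contains a full open right half-plane and a full open left half-plane, the off-diagonal entries of $E C E^{-1}$, which are $e^{\pm z} z^{\pm(2\beta-\gamma-1)} C_{ij}$, can only vanish asymptotically if $C_{12} = C_{21} = 0$, and the constant diagonal entries then force $C_{11} = C_{22} = 1$. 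The main obstacle I anticipate is rigorously handling $1$-summability on a sector of opening exactly $2\pi$, which sits at the boundary of Ramis's theorem; one can argue by a slight-shrinking limit and gluing the resulting sections, or else bypass summability altogether by writing $H_k(z)$ as a Laplace integral of an explicit Borel transform, exploiting the known integral representations of the $_2F_0$ series that make up $H_\infty(z)$.
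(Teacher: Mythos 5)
Your proposal is correct in substance, but the existence step takes a genuinely different route from the paper. The formal part (ansatz $\widetilde{R}_\infty H(z)\,\mathrm{diag}(e^z z^{\beta-\gamma}, z^{1-\beta})$, recursion with $[\mathrm{diag}(1,0),\cdot]$ determining off-diagonal entries and the diagonal part fixing the rest, identification with the explicit ${}_2F_0$ matrix) and the uniqueness argument (constant matrix $C$ with $E(z)CE(z)^{-1}\to I$, forcing $C=I$ because the sector contains directions where each of $e^{\pm z}$ blows up) are essentially identical to the paper's. For analytic existence, however, the paper simply cites the sectoral existence theorem of Balser--Jurkat--Lutz and only uses the formal computation to identify the asymptotics, whereas you propose Borel--Laplace summation of the Gevrey-$1$ series. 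That route works and is more self-contained in spirit (it is close to the Ramis--Martinet material the paper surveys later, and your ``explicit Borel transform'' fallback is essentially the Mellin--Barnes/Kummer-$U$ representation derived in Appendix B), but two details need tightening. First, the singular directions for $1$-summability are not the Stokes ray interior to $\widetilde{\Sigma}_k$: the Borel transforms of the off-diagonal components have their singularities at $\zeta=\pm 1$, so the singular directions are $\arg z=0$ and $\arg z=\pi$ (the anti-Stokes directions), and the Stokes rays $\arg z=\pm\pi/2$ of the paper are where dominance switches, not where summation fails. Second, the worry about ``opening exactly $2\pi$'' dissolves once this is fixed: summing in each direction $d$ strictly between the two singular directions yields sums asymptotic on half-planes $\{\mathrm{Re}(ze^{id})>0\}$ (slightly enlarged), and gluing these over $d$ ranging through an open interval of length $\pi$ already covers a sector of opening $2\pi$ of the form $\widetilde{\Sigma}_k$, so no shrinking-and-limiting argument is required. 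With those corrections your argument is a valid alternative to the paper's citation of the general existence theorem.
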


\begin{proof} A proof of the existence of fundamental solutions $\widetilde{Y}^{(\infty,k)}(z)$ which are analytic on sectors $\widetilde{\Sigma}_{k}$ may be found in \cite{BJL}. 
To find the asymptotic behaviour \eqref{eq:chgasy}, we make the following ansatz
\begin{align} 
&\widetilde{Y}^{(\infty,k)}(z) \sim \widetilde{R}_{\infty} H_{\infty}(z) \text{exp}\left( \int_{-\infty}^{z} \left(\Lambda_{0} + \frac{\Lambda_{1}}{z'}\right)\ dz' \right), &&\text{as } z \rightarrow \infty, \ z \in \widetilde{\Sigma}_{k}, \nonumber
 \end{align} 
where,
\[ \widetilde{R}_{\infty} = \left(\begin{array}{cc} 1 & 0 \\ 0 & \frac{-1}{(\beta-1)(\beta-\gamma)} \end{array} \right),\]
$\Lambda_{0}$ and $\Lambda_{1}$ are constant, diagonal matrices to be determined and $H_{\infty}(z)$ is a formal series
\[H_{\infty}(z) = \sum_{n=0}^{\infty} h_{n,\infty}z^{-n}.\]
where the coefficients $h_{n,\infty}$ are to be determined. 

By substitution in the equation (\ref{eq:chg1}), we obtain\begin{align} -\sum_{n=1}^{\infty} n h_{n,\infty} z^{-n-1} &+ \left(\sum_{n=0}^{\infty} h_{n,\infty} z^{-n}\right)\left( \Lambda_{0} + \frac{\Lambda_{1}}{z} \right) \nonumber \\
&= \widetilde{R}_{\infty}^{-1} \left( \left( \begin{array}{cc} 1 & 0 \\ 0 & 0 \end{array} \right) + \frac{\widetilde{A}_{0}}{z} \right)\widetilde{R}_{\infty}  \left( \sum_{n=0}^{\infty} h_{n,\infty} z^{-n} \right). \nonumber \end{align}
By setting $h_{0,\infty} = I$ and equating powers of $z^{-n}$ in this equation, for $n=0$ and $1$, we find:
\[\Lambda_{0} = \left(\begin{array}{cc} 1 & 0 \\ 0 & 0 \end{array} \right) \text{ and } \Lambda_{1} = \left(\begin{array}{cc} \beta-\gamma & 0 \\ 0 & 1-\beta \end{array}\right),\]
and, for $n \geq 1$, we find the recursion equation, 
\[\left[ h_{n,\infty} , \left(\begin{array}{cc} 1 & 0 \\ 0 & 0 \end{array} \right) \right] = (n-1) h_{n-1,\infty} + h_{n-1,\infty} \left(\begin{array}{cc} \gamma-\beta & 0 \\ 0 & \beta-1 \end{array} \right) + \widetilde{R}_{\infty}^{-1}\widetilde{A}_{0}\widetilde{R}_{\infty}h_{n-1,\infty}.\]
It can be verified that the general solution of this equation is, 
\begin{align}h_{n,\infty} &= \left(\begin{array}{cc} \frac{(1-\beta)_{n}(\gamma-\beta)_{n}}{n!} & \frac{(\beta)_{n-1}(\beta+1-\gamma)_{n-1}}{(-1)^{n}(n-1)!} \\ \frac{(1-\beta)(\beta-\gamma)(2-\beta)_{n-1}(\gamma+1-\beta)_{n-1}}{(n-1)!} & \frac{(\beta-1)_{n}(\beta-\gamma)_{n}}{(-1)^{n}n!} \end{array}\right), \label{eq:hninf} \end{align}
which are indeed the coeficients in the asymptotic series given.

To prove uniqueness of solutions, let $\widehat{Y}^{(\infty,k)}(z)$ denote another fundamental solution of equation (\ref{eq:chg1}) which is analytic on the sector $\widetilde{\Sigma}_{k}$ and has the correct asymptotic behavior, namely, 
\begin{align} \widehat{Y}^{(\infty,k)}(z) \sim \widetilde{R}_{\infty}\left(\sum_{n=0}^{\infty}h_{n,\infty}z^{-n}\right) \left(\begin{array}{cc} e^{z} z^{\beta-\gamma} & 0 \\ 0 & z^{1-\beta} \end{array} \right), \quad \quad \text{as } z \rightarrow \infty, \ z \in \widetilde{\Sigma}_{k}. \label{eq:chgasy2} \end{align}
Since $\widetilde{Y}^{(\infty,k)}(z)$ and $\widehat{Y}^{(\infty,k)}(z)$ are fundamental solutions defined on the same sector, there exists a constant matrix $C \in \text{GL}_{2}(\mathbb{C})$ such that,
\[\widetilde{Y}^{(\infty,k)}(z) = \widehat{Y}^{(\infty,k)}(z) C, \quad \quad z \in \widetilde{\Sigma}_{k}.\]
Using the asymptotic relations (\ref{eq:chgasy}) and (\ref{eq:chgasy2}), we deduce the following, 
\[\left(\begin{array}{cc} e^{z} z^{\beta-\gamma} & 0 \\ 0 & z^{1-\beta} \end{array} \right) C \left(\begin{array}{cc} e^{-z} z^{\gamma-\beta} & 0 \\ 0 & z^{\beta-1} \end{array} \right) \sim I, \quad \quad \text{as } z \rightarrow \infty, \ z \in \widetilde{\Sigma}_{k}.\]
From this relation, we immediately see that $(C)_{1,1} = (C)_{2,2}=1$. Moreover, since there exists rays belonging to $\widetilde{\Sigma}_{k}$ along which each exponential, $e^{z}$ and $e^{-z}$, explodes as $z \rightarrow \infty$, we conclude that $(C)_{1,2} = (C)_{2,1} = 0$. \end{proof}

\begin{remark} The matrices $\widetilde{R}_{0}$ and $\widetilde{R}_{\infty}$ in the above solutions (\ref{eq:aroundzero}) and (\ref{eq:chgasy}) have been chosen to satisfy $\widetilde{R}_{0}^{-1}\widetilde{A}_{0}\widetilde{R}_{0} = \widetilde{\Theta}_{0}$ and, 
\[ \left[\widetilde{R}_{\infty} , \left(\begin{array}{cc} 1 & 0 \\ 0 & 0 \end{array} \right)\right] = 0.\]
\end{remark}

We denote the asymptotic behaviour of true solutions at infinity as in (\ref{eq:chgasy}) by,
\begin{align} 
\widetilde{Y}_{f}^{(\infty)}(z) = \left( \sum_{n=0}^{\infty}h_{n,\infty}z^{-n}\right) \left(\begin{array}{cc} e^{z} z^{\beta-\gamma} & 0 \\ 0 & z^{1-\beta} \end{array} \right), \quad \quad z \in \widetilde{\Sigma}_{k}. \nonumber \end{align}
The series $H_{\infty}(z) = \sum_{n=0}^{\infty}h_{n,\infty}z^{-n}$ defines a formal gauge transformation which maps equation (\ref{eq:chg1}) to, 
\begin{align} \frac{\partial}{\partial z}\widehat{Y}(z) = \left( \left(\begin{array}{cc} 1 & 0 \\ 0 & 0 \end{array} \right) + \frac{1}{z} \left( \begin{array}{cc}\beta-\gamma & 0 \\ 0 & 1-\beta\end{array} \right) \right) \widehat{Y}, \label{eq:neweqn} \end{align}
via the transformation $\widetilde{Y}(z) = \widetilde{R}_{\infty} H_{\infty}(z) \widehat{Y}(z)$. We define the coefficient of $\frac{1}{z}$ in the new equation to be $-\widetilde{\Theta}_{\infty}$, namely,
\[\widetilde{\Theta}_{\infty} := \left(\begin{array}{cc} \gamma-\beta & 0 \\ 0 & \beta-1 \end{array} \right) \equiv - \text{diag}\left(\widetilde{A}_{0}\right).\]
In the generic case $a,b \notin \mathbb{Z}^{\leq 0}$, d'Alembert's ratio test shows that the series $\ _{2}F_{0}( a , b ; z^{-1} )$ diverges for all $z \in \mathbb{C}$. In this sense, the asymptotic behaviour $\widetilde{Y}_{f}^{(\infty)}(z)$ is a formal fundamental solution. \\

\begin{remark} \label{remark:formal} Using expression (\ref{eq:y2}) in Lemma \ref{lemma:l2}, the formal fundamental solution $\widetilde{Y}_{f}^{(\infty)}$ of (\ref{eq:chg1}) corresponds to the following standard formal basis of solutions of (\ref{eq:kummer}),
\begin{align} &\begin{matrix*}[l] \tilde{y}_{1,f}^{(\infty)}(z) = e^{z} z^{\beta-\gamma} \ _{2}F_{0} \left(\gamma-\beta, \ 1-\beta ; z^{-1} \right), \\ \tilde{y}_{2,f}^{(\infty)}(z) = -z^{-\beta}\ _{2}F_{0} \left( \beta , \ \beta+1-\gamma ; -z^{-1} \right). \end{matrix*} \label{eq:yformal} \end{align} \end{remark}

\subsubsection{Monodromy data} \label{sec:kummon}
We now define the monodromy data, including Stokes data, of Kummer equation (\ref{eq:kummer}) and recall how to express them in explicit form \cite{bateman,ww}. In Appendix B, we derive these classical formulae by representing solutions using Mellin-Barnes integrals. 

\begin{definition} \label{definition:kumst} Let $\widetilde{Y}^{(\infty,k)}(z)$ be the fundamental solutions given in Theorem \ref{theorem:kummerst} and define sectors,
\[\widetilde{\Pi}_{k} := \widetilde{\Sigma}_{k} \cap \widetilde{\Sigma}_{k+1} \equiv \left\{ z : |z|>0 , \ \frac{\pi}{2} < \text{arg}(z) - k \pi < \frac{3\pi}{2}\right\},\]
as illustrated in Figure \ref{fig:psec} below. We define Stokes matrices $\widetilde{S}_{k} \in \text{SL}_{2}(\mathbb{C})$ as follows,
\begin{align} \widetilde{Y}^{(\infty,k+1)}(z) = \widetilde{Y}^{(\infty,k)}(z) \widetilde{S}_{k} , \quad z \in \widetilde{\Pi}_{k}. \label{eq:kummerstokes} \end{align} \end{definition}

\begin{figure}[H] \begin{center}
\includegraphics[scale=.6]{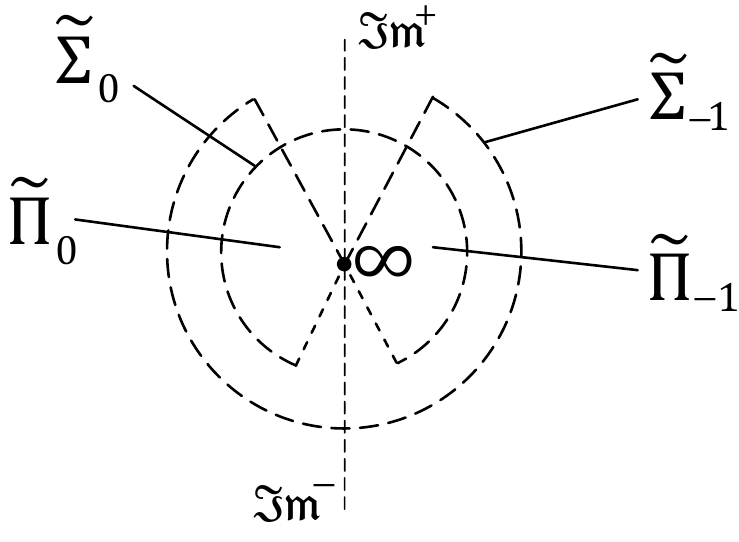}
\caption{\label{fig:psec}Sectors $\widetilde{\Pi}_{0}$, $\widetilde{\Pi}_{-1}$, $\widetilde{\Sigma}_{0}$ and $\widetilde{\Sigma}_{-1}$ projected onto the plane $\overline{\mathbb{C}}\backslash\{0\}$. The positive and negative imaginary axes are Stokes rays.}
\end{center} \end{figure}

From the asymptotic relation (\ref{eq:chgasy}), it is clear that 
\begin{align} 
\widetilde{Y}^{(\infty,k+2)}(z) = \widetilde{Y}^{(\infty,k)}\left(ze^{-2 \pi i}\right) e^{-2 \pi i \widetilde{\Theta}_{\infty}}, \quad \quad z \in \widetilde{\Sigma}_{k+2}. \label{eq:sameasy} 
\end{align}
due to the fact that these two solutions
have the same asymptotic behaviour as $z \rightarrow \infty$ in the sector $z \in \widetilde{\Sigma}_{k+2}$. Therefore all solutions $\widetilde{Y}^{(\infty,k)}(z)$ are categorised into two fundamentally distinct cases, namely, when $k$ is even and when $k$ is odd. 
Combining Definition \ref{definition:kumst} with the relation (\ref{eq:sameasy}), one can show that
\begin{align} 
e^{-2 \pi i \widetilde{\Theta}_{\infty}} \widetilde{S}_{k+1} = \widetilde{S}_{k-1} e^{-2\pi i \widetilde{\Theta}_{\infty}}, \nonumber \end{align}
which shows that Kummer equation has only two types of Stokes matrices $\widetilde{S}_{k}$ which are fundamentally different: one with $k$ odd and the other with $k$ even. 

Here we select to work with the fundamental solutions $\widetilde{Y}^{(\infty,-1)}(z)$ in the sector $\widetilde\Sigma_{-1}$ and $\widetilde{Y}^{(\infty,0)}(z)$ in the sector $\widetilde\Sigma_{0}$ and with the Stokes matrices $\widetilde S_{0}$ and $\widetilde S_{-1}$. The explicit form of  the Stokes matrices are derived in the Appendix B where the following Lemma is proved:

\begin{lemma} \label{lemma:stokes} We have the following classical formulae:
\begin{align}\widetilde{S}_{0} &= \left(\begin{array}{cc} 1 & \frac{2 \pi i }{\Gamma(\beta)\Gamma(\beta+1-\gamma)} e^{i \pi (\gamma-2\beta)} \\ 0 & 1 \end{array} \right) \quad \text{and} \quad \widetilde{S}_{-1} &= \left(\begin{array}{cc} 1 & 0 \\ \frac{2 \pi i}{\Gamma(1-\beta)\Gamma(\gamma-\beta)} & 1 \end{array} \right). \label{eq:kummers-1} \end{align} \end{lemma}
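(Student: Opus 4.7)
The plan is to derive the explicit Stokes matrices by relating the three asymptotic fundamental solutions on adjacent sectors through Mellin--Barnes integral representations, following the approach announced for Appendix B.

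First I would write each asymptotic fundamental solution $\widetilde{Y}^{(\infty,k)}(z)$ as a Mellin--Barnes contour integral whose integrand is a product of $\Gamma$-functions in the parameters $\beta$ and $\gamma-\beta$ multiplied by a factor of the form $(-z)^{s}$, with the branch adapted to the sector $\widetilde{\Sigma}_k$. In each sector the contour can be closed on the side dictated by the exponential decay of the integrand, so that the ensuing residue sum reproduces the correct asymptotic behaviour of Theorem \ref{theorem:kummerst}. This identifies the integral representation of $\widetilde{Y}^{(\infty,-1)}$ on $\widetilde{\Sigma}_{-1}$ and of $\widetilde{Y}^{(\infty,0)}$ on $\widetilde{\Sigma}_{0}$, and similarly for $\widetilde{Y}^{(\infty,1)}$.

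Rather than manipulating the three representations directly against one another, I would use the local solution $\widetilde{Y}^{(0)}(z)$ at $z=0$ as a common reference. Its columns are explicit $_1F_1$ series with sector-independent coefficients, so closing the Mellin--Barnes contour on the opposite side gives, via the classical link between $_1F_1$ and the two Tricomi $U$-solutions, connection formulae
\[
\widetilde{Y}^{(0)}(z)=\widetilde{Y}^{(\infty,k)}(z)\,\widetilde{C}^{\infty k,0},
\]
whose entries are products of $\Gamma$-functions in $\beta,\gamma-\beta,\beta+1-\gamma,1-\beta$ multiplied by appropriate powers of $e^{i\pi\beta}$ coming from the chosen branch of $(-z)^{-\beta}$ on $\widetilde{\Sigma}_k$. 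Once these are in hand, the Stokes matrices are recovered from the tautology
\[
\widetilde{S}_k=\widetilde{C}^{\infty k,0}\bigl(\widetilde{C}^{\infty(k+1),0}\bigr)^{-1},
\]
valid on the overlap $\widetilde{\Pi}_k$, so the computation reduces to multiplying two explicit $2\times2$ matrices and simplifying with the reflection formula $\Gamma(s)\Gamma(1-s)=\pi/\sin(\pi s)$.

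The triangular structure of $\widetilde{S}_0$ and $\widetilde{S}_{-1}$ with unit diagonal is guaranteed a priori: on $\widetilde{\Pi}_{-1}$ one has $\mathrm{Re}(z)>0$ so $e^z$ is dominant, while on $\widetilde{\Pi}_0$ it is subdominant, forcing exactly one off-diagonal entry in each matrix to vanish and the diagonal entries to equal $1$ by the uniqueness part of Theorem \ref{theorem:kummerst}. The main obstacle I expect is the phase bookkeeping: the factor $e^{i\pi(\gamma-2\beta)}$ in $\widetilde{S}_0$ and the precise $2\pi i$ prefactors in both matrices are sensitive to the branch conventions of the Mellin--Barnes integrand, to the choice of $\widetilde{R}_\infty$, and to the minus sign in the second column of $\widetilde{y}_{2,f}^{(\infty)}$ fixed in \eqref{eq:yformal}; coordinating all of these so that the product collapses exactly onto \eqref{eq:kummers-1}, rather than onto a diagonal conjugate of it, is the delicate step.
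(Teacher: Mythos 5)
Your plan runs on the same machinery as the paper's Appendix B: Mellin--Barnes representations of the sectorial solutions (Proposition \ref{proposition:kp}), their analytic continuation to $z=0$ against the $_1F_1$ basis (Proposition \ref{proposition:kpp}), and the a priori unipotent triangular form of $\widetilde{S}_0,\widetilde{S}_{-1}$ from the dominance of $e^{\pm z}$ on $\widetilde{\Pi}_0,\widetilde{\Pi}_{-1}$. Where you diverge is the endgame: you would compute three connection matrices $\widetilde{C}^{\infty k,0}$, $k=-1,0,1$, and extract each Stokes matrix as a quotient, whereas the paper computes the single connection matrix $\widetilde{C}^{0\infty}$ of \eqref{eq:kummerc0} and then solves the cyclic relation \eqref{eq:cycrel}, i.e.\ $\widetilde{C}^{\infty 0}e^{2\pi i\widetilde{\Theta}_0}\widetilde{C}^{0\infty}=\widetilde{S}_{-1}^{-1}e^{-2\pi i\widetilde{\Theta}_\infty}\widetilde{S}_0^{-1}$, for the two Stokes multipliers at once, using the triangularity. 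The paper's route is more economical (one continuation instead of three) and, more importantly, it makes explicit the factor your route hides.

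That hidden factor is the one genuine weak point of your write-up: the identity $\widetilde{S}_k=\widetilde{C}^{\infty k,0}\bigl(\widetilde{C}^{\infty(k+1),0}\bigr)^{-1}$ is a tautology only if both connection matrices refer to the \emph{same} determination of $\widetilde{Y}^{(0)}$. For $k=-1$ this is harmless, since $\widetilde{\Pi}_{-1}$ lies inside the branch domain $\widetilde{\Omega}_0$. But $\widetilde{\Pi}_0=\{\pi/2<\arg z<3\pi/2\}$ lies entirely beyond the cut of $\widetilde{\Omega}_0$ (which runs along the positive imaginary axis), so the connection matrix for $\widetilde{Y}^{(\infty,1)}$ is necessarily computed against a continuation of $\widetilde{Y}^{(0)}$ across that cut; relative to the principal determination it differs by the local monodromy $e^{2\pi i\widetilde{\Theta}_0}$ (equivalently one must invoke $\widetilde{Y}^{(\infty,1)}(z)=\widetilde{Y}^{(\infty,-1)}(ze^{-2\pi i})e^{-2\pi i\widetilde{\Theta}_\infty}$ as in \eqref{eq:sameasy}). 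If you form the quotient without inserting this factor you lose precisely the phase $e^{i\pi(\gamma-2\beta)}$ in $\widetilde{S}_0$. You flag the phase bookkeeping as delicate, which is the right instinct, but the fix should be stated: either track the monodromy factor explicitly in the $k=0$ quotient, or do as the paper does and fold it into the cyclic relation \eqref{eq:chgm}--\eqref{eq:cycrel}, where $e^{2\pi i\widetilde{\Theta}_0}$ appears by construction. With that correction your plan closes the argument and reproduces \eqref{eq:kummers-1}.
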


We choose to normalise our monodromy data with respect to the fundamental solution $\widetilde{Y}^{(\infty,0)}(z)$. Denote by $\gamma_{\infty,0}\left[\widetilde{Y}^{(\infty,0)}\right](z)$ the analytic continuation of $\widetilde{Y}^{(\infty,0)}(z)$ along an orientable curve $\gamma_{\infty,0} : [0,1] \rightarrow \mathbb{C}$ with $\gamma_{\infty,0}(0) \in \widetilde{\Sigma}_{0}$ and $\gamma_{\infty,0}(1) \in \widetilde{\Omega}_{0}$. We have,
\[\gamma_{\infty 0} \left[\widetilde{Y}^{(\infty,0)}\right](z) = \widetilde{Y}^{(0)}(z) \widetilde{C}^{0 \infty},\]
where,
\begin{align} \widetilde{C}^{0 \infty} &= \left(\begin{array}{cc} e^{i \pi (\beta-1)} \frac{\Gamma(\gamma-1)}{\Gamma(\gamma-\beta)} & -\frac{\Gamma(\gamma-1)}{\Gamma(\beta)} \\ e^{i \pi (\beta-\gamma)} \frac{\Gamma(1-\gamma)}{\Gamma(1-\beta)} & -\frac{\Gamma(1-\gamma)}{\Gamma(\beta+1-\gamma)} \end{array} \right). \label{eq:kummerc0} \end{align}

Denote by $\gamma_{0}\left[\widetilde{Y}^{(\infty,0)}\right](z)$ the analytic continuation of $\widetilde{Y}^{(\infty,0)}(z)$ along an orientable, closed curve $\gamma_{0} : [0,1] \rightarrow \mathbb{C}$ with $\gamma_{0}(0) = \gamma_{0}(1) \in \widetilde{\Sigma}_{0}$ which encircles the singularity $z = 0$ in the positive (anti-clockwise) direction. The curve $\gamma_{0}$ is illustrated below, note that $\gamma_{\infty}:=\gamma_{0}^{-1}$.
\begin{figure}[H] \begin{center}
\includegraphics[scale=.8]{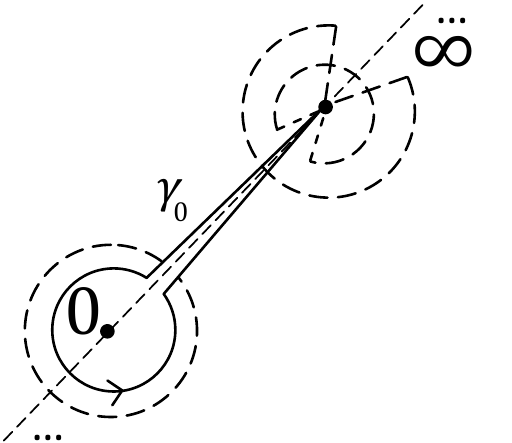}
\caption{\label{fig:chgloops}Curves defining the monodromy matrices $\widetilde{M}_{k}$ of Kummer hypergeometric differential equation.}
\end{center} \end{figure} 

We have,
\begin{align} \gamma_{k}\left[ \widetilde{Y}^{(\infty,0)}\right](z) = Y^{(\infty,k)}(z)\widetilde{M}_{k}, \quad \quad k=0,\infty, \nonumber \end{align}
where,
\begin{align} \widetilde{M}_{0} = \left(\widetilde{C}^{0\infty}\right)^{-1} e^{2 \pi i \widetilde{\Theta}_{0}} \widetilde{C}^{0 \infty} \quad \text{and} \quad \widetilde{M}_{\infty} = \widetilde{S}_{0}e^{2 \pi i \widetilde{\Theta}_{\infty}}\widetilde{S}_{-1}. \label{eq:chgm} \end{align}
These matrices satisfy the cyclic relation,
\begin{align} \widetilde{M}_{\infty}\widetilde{M}_{0} = I. \label{eq:cycrel} \end{align}

\begin{definition} We define the monodromy data of Kummer hypergeometric differential equation (\ref{eq:kummer}) as the set, 
\begin{align} 
\widetilde{\mathcal{M}} := \left\{ \begin{matrix*}[r] \left(\widetilde{M}_{0} , \widetilde{S}_{0} , \widetilde{S}_{-1}\right) \\ \in \left(\text{GL}_{2}(\mathbb{C})\right)^{3} \end{matrix*} \left| \begin{array}{c} \widetilde{S}_{0} \text{ unipotent, upper triangular,} \\ \widetilde{S}_{-1} \text{unipotent, lower triangular,} \\ \widetilde{S}_{0}e^{2 \pi i \widetilde{\Theta}_{\infty}} \widetilde{S}_{-1}\widetilde{M}_{0} = I, \\ \text{eigenv}\left(\widetilde{M}_{0}\right) = e^{2 \pi i \widetilde{\Theta}_{0}} \end{array} \right. \right\}_{\text{\Huge/\normalsize GL$_{2}(\mathbb{C})$}} \label{eq:chgmono} \end{align}
where eigenv$(\widetilde{M}_{0}) = e^{2\pi i \widetilde{\Theta}_{0}}$ means that the eigenvalues of $\widetilde{M}_{0}$ are given as the elements of the diagonal matrix $e^{2 \pi i \widetilde{\Theta}_{0}}$ and the quotient is by global conjugation by a diagonal matrix. \end{definition}

\section{Confluence from Gauss to Kummer equation} \label{sec:hgconf}

In this Section we analyse the confluence procedure from Gauss equation (\ref{eq:gauss}) to Kummer equation (\ref{eq:kummer}). We are primarily concerned with understanding how to produce the monodromy data of the Kummer equation, as defined in Section \ref{sec:kummon}, from the connection matrices of the Gauss equation (see Section \ref{sec:gg}), under the confluence procedure. 

We first explain how the confluence procedure works intuitively. By the substitution  $x=\frac{z}{\alpha}$, 
on the Gauss equation (\ref{eq:gauss})
\begin{align} x(1-x) \ y''(x) + (\gamma - (\alpha+\beta+1)x) \ y'(x) - \alpha \beta \ y(x) &= 0, \nonumber \\
\Leftrightarrow \frac{z}{\alpha} \left( \frac{\alpha-z}{\alpha} \right) \alpha^{2} \ y_{zz} + \left(\gamma - (\alpha+\beta+1)\frac{z}{\alpha}\right) \alpha \ y_{z} - \alpha \beta \ y &= 0, \nonumber \\
\Leftrightarrow z \ y_{zz} + (\gamma-z) \ y_{z} - \beta \ y - \frac{1}{\alpha} \left(z^{2} y_{zz} + (\beta+1) y_{z}\right) &=0. \nonumber \end{align}
we produce an differential equation with three Fuchsian singularitites at $z=0, \alpha$ and $\infty$ respectively. 

As a heuristic argument, one can see that the final equation becomes Kummer equation (\ref{eq:kummer}) as $\alpha \rightarrow \infty$ so that  a double pole is created at $z=\infty$ as the two simple poles $z=\alpha$ and $\infty$ merge. This derivation does not explain how of obtain solutions of the Kummer equation by taking limits as $\alpha \rightarrow \infty$ of certain solutions of Gauss equation under the substitution $x=\frac{z}{\alpha}$. To understand this, we need to use a result by Glutsyuk \cite{glutsyuk}, which deals with limits of solutions at merging simple poles under a generic confluence procedure. This is explained in the next sub-section.

\subsection{A result by Glutsyuk} \label{sec:gl}

Consider the following differential equation, 
\begin{align} \frac{\partial Y}{\partial \lambda} = \frac{A(\lambda, \eps)}{(\lambda-\eps)(\lambda+\eps)} Y, \quad A(\lambda,\eps) \in \text{GL}_{2}(\mathbb{C}), \label{eq:gl} \end{align}
with $A(\lambda,\eps)$ a holomorphic matrix about $\lambda = \pm \eps$ such that $A(\pm \eps,\eps) \neq 0$ for sufficiently small $\eps \geq 0$ satisfying the following limit,
\[\lim_{\eps \rightarrow 0} A(\lambda,\eps) = A(\lambda,0).\]
Hence, the non-perturbed, or \textit{confluent}, equation, 
\begin{align} \frac{\partial Y}{\partial \lambda} = \frac{A(\lambda, 0)}{\lambda^{2}} Y, \label{eq:gl2} \end{align}
has an irregular singularity at $\lambda=0$ of Poincar\'{e} rank one. Moreover, it is assumed that the eigenvalues of the residue matrices  $A(\pm \eps,\eps)$ of at $\lambda = \pm \eps$ are non resonant and thet  the eigenvalues of the leading matrix of $A(\lambda,0)$ at $\lambda = 0$ are distinct.

We first deal with the perturbed equation (\ref{eq:gl}). We define neighbourhoods $\Omega_{\pm \eps}$ of the points $\lambda = \pm \eps$ respectively whose radii are less than $2 |\eps|$ and with branch cuts made along the straight line passing through the points $\lambda = -\eps , 0 , \eps$, as illustrated in Figure \ref{fig:gl3} below. Equation (\ref{eq:gl}) has fundamental solutions $Y^{(\pm \eps)}(\lambda)$ which are analytic in the cut disks $\Omega_{\pm}(\eps)$ of the following form,
\begin{align} Y^{(\pm \eps)}(\lambda) &= \left( \sum_{n=0}^{\infty} G_{n,\pm \eps} (\lambda \mp \eps)^{n} \right) (\lambda \mp \eps)^{\Lambda_{\pm \eps}}, &&\lambda \in \Omega_{\pm \eps}, \nonumber \end{align}
where $G_{0,\pm \eps}$ are fixed matrices which diagonalise the residue matrices $A(\pm \eps,\eps) $ and all other terms of the series are determined by certain recursion formulae.

\begin{figure}[H] \begin{center}
\includegraphics[scale=.5]{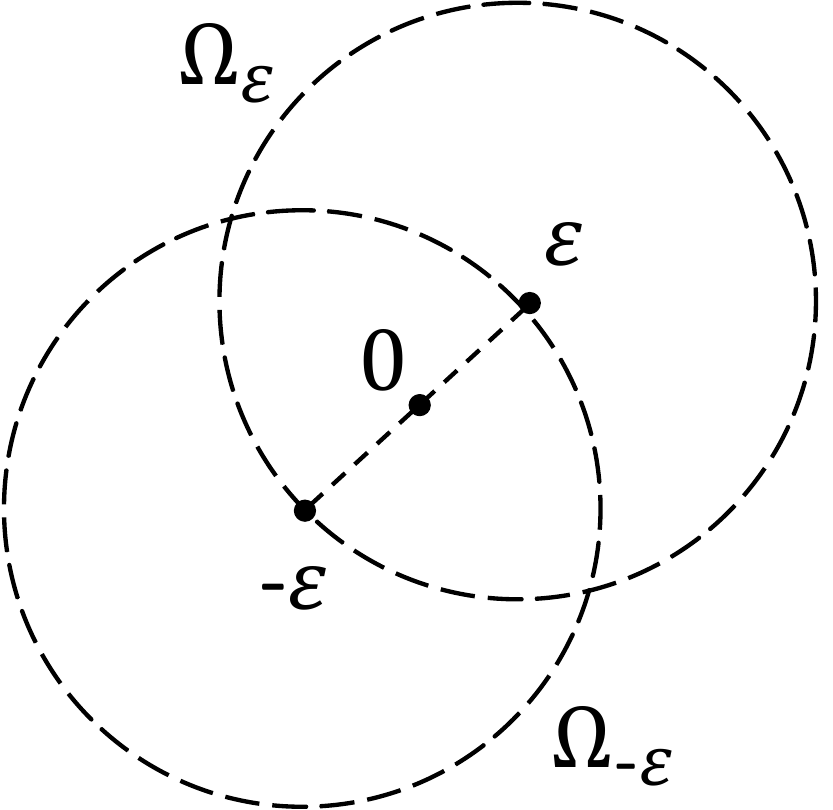}
\caption{\label{fig:gl3}An illustration of the neighbourhoods $\Omega_{\pm \eps}$ with branch cuts in which we define the fundamental solutions $Y^{(\pm \eps)}(\lambda)$.}
\end{center} \end{figure} 

We now turn our attention to the confluent equation (\ref{eq:gl2}). Denote by $\mu_{1}$ and $\mu_{2}$ the eigenvalues of the leading matrix of $A(\lambda,0)$ at $\lambda=0$ (by assumption, $\mu_{1}\neq\mu_{2}$) and let,
\[r_{i,j}=\left\{ \lambda : \ \text{Re}\left( \frac{\mu_{i}-\mu_{j}}{\lambda} \right) = 0, \ \text{Im}\left(\frac{\mu_{i}-\mu_{j}}{\lambda} \right) >0 \right\}, \quad i,j\in \{1,2\},\]
be the Stokes rays. We denote by $\mathscr{S}_{0}$ and $\mathscr{S}_{1}$ open sectors whose union is a punctured neighbourhood of $\lambda=0$, each of which: has an opening greater than $\pi$; contains only one Stokes ray and does not contain the other Stokes ray at its boundary. An illustration of such Stokes rays and sectors is given below. 
\begin{figure}[H] \begin{center}
\includegraphics[scale=.8]{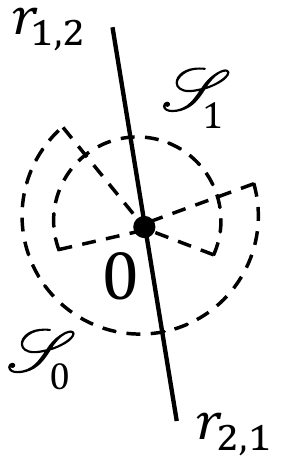}
\caption{\label{fig:gl}An illustration of the Stokes rays $r_{i,j}$ and sectors $\mathscr{S}_{0}$ and $\mathscr{S}_{1}$.}
\end{center} \end{figure} 
We can cover all of the sheets of the Riemann surface of the logarithm at $\lambda = 0$ by extending the notation as follows,
\[\lambda \in \mathscr{S}_{k+2} \Leftrightarrow \lambda e^{-2 \pi i} \in \mathscr{S}_{k}.\]

From  the standard theory of linear systems of ordinary differential equations, there exists a number $R$ sufficiently large such that, for all $k \in \mathbb{Z}$, there exist fundamental solutions $Y^{(0,k)}(\lambda)$ of the non-perturbed equation (\ref{eq:gl2}) analytic in the sectors $\mathscr{S}_{k}$
such that,
\[Y^{(0,k)}(\lambda) \sim \left( \sum_{n=0}^{\infty} H_{n} \lambda^{n} \right) \lambda^{\Theta_{0}} \text{exp}\left(\lambda^{-1} \left(\begin{array}{cc} \mu_{1} & 0 \\ 0 & \mu_{2} \end{array} \right) \right), \text{ as } \lambda \rightarrow 0, \ \lambda \in \mathscr{S}_{k},\]
where $H_{0}$ is a fixed matrix which diagonalises the leading term of $A(\lambda,0)$ at $\lambda=0$, all other terms of the series and the diagonal matrix $\Theta$ are uniquely determined by certain recursion relations. Each solution $Y^{(0,k)}(\lambda)$ is uniquely specified by the above asymptotic relation. 

We define open sectors $\sigma_{\pm \eps}(\eps) \subset \Omega_{\pm}$ with base points at $\lambda = \pm \eps$ respectively whose openings do not contain the branch cut between $-\eps$ and $\eps$ as illustrated in Figure \ref{fig:gl2} below. 
\begin{figure}[H] \begin{center}
\includegraphics[scale=.8]{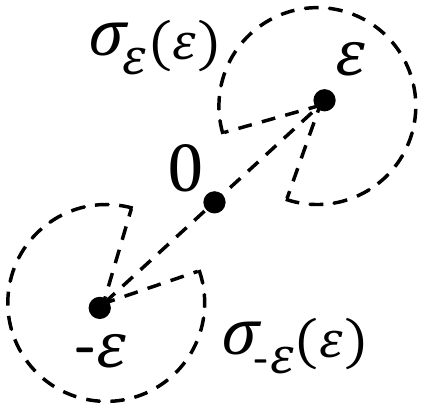}
\caption{\label{fig:gl2}An illustration of the sectors $\sigma_{\pm \eps}(\eps)$.}
\end{center} \end{figure} 
We impose the condition that, as $\eps \rightarrow 0$ along a ray, the sector $\sigma_{\eps}(\eps)$ (resp. $\sigma_{-\eps}(\eps)$) is translated along a ray to zero and becomes in agreement with the sector $\mathscr{S}_{k+1}$ (resp. $\mathscr{S}_{k}$), for some $k \in \mathbb{Z}$. We write this condition as follows,
\begin{align} \lim_{\eps \rightarrow 0} \sigma_{\eps}(\eps) = \mathscr{S}_{k+1} \quad \text{and} \quad \lim_{\eps \rightarrow 0} \sigma_{-\eps}(\eps) = \mathscr{S}_{k}. \label{eq:limits} \end{align}

\begin{theorem} \label{theorem:gl} Let the fundamental solutions $Y^{(\eps)}(\lambda)$, $Y^{(-\eps)}(\lambda)$ and $Y^{(0,k)}(\lambda)$ and the sectors $\sigma_{\eps}(\eps)$, $\sigma_{-\eps}(\eps)$ and $\mathscr{S}_{k}$ be defined as above. There exist diagonal matrices $K_{\eps}$ and $K_{-\eps}$ such that we have the following limits,
\begin{align} &\lim_{\eps \rightarrow 0} \left. Y^{(\eps)}(\lambda)\right|_{\lambda \in \sigma_{\eps}(\eps)} K_{\eps} = Y^{(0,k+1)}(\lambda), \nonumber \\
&\lim_{\eps \rightarrow 0} \left. Y^{(-\eps)}(\lambda)\right|_{\lambda \in \sigma_{-\eps}(\eps)} K_{-\eps} = Y^{(0,k)}(\lambda), \nonumber \end{align}
uniformly for $\lambda \in \mathscr{S}_{k+1}$, $\mathscr{S}_{k}$ respectively, as $\eps$ belongs to a fixed ray. \end{theorem}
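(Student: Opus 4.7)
The plan is to identify the diagonal matrices $K_{\pm\eps}$ by matching the local normal form of $Y^{(\pm\eps)}$ at the simple poles with the prescribed sectorial asymptotic of $Y^{(0,k)}$, and then to promote this formal match into a uniform convergence statement via a Picard-type stability argument; the uniqueness clause for sectorial solutions will then force the identification of the limits.

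For the construction of $K_{\pm\eps}$, I would analyze the local factor $(\lambda\mp\eps)^{\Lambda_{\pm\eps}}$ as $\eps\to 0$. Since the residue of \eqref{eq:gl} at $\lambda=\pm\eps$ equals $A(\pm\eps,\eps)/(\pm 2\eps)$, its eigenvalues---and hence those of $\Lambda_{\pm\eps}$---diverge as $\eps\to 0$ with leading part of order $1/\eps$ determined by the eigenvalues of the leading matrix $A(0,0)$. Using the factorization
\[
(\lambda\mp\eps)^{\Lambda_{\pm\eps}} \;=\; \lambda^{\Lambda_{\pm\eps}}\exp\!\bigl(\Lambda_{\pm\eps}\log(1\mp\eps/\lambda)\bigr),
\]
and expanding the logarithm in powers of $\eps/\lambda$, the second factor limits, term by term, to an exponential of the form $\exp(\,\cdot\,/\lambda)$ that reproduces the exponential content of $Y^{(0,k)}$. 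The divergent factor $\lambda^{\Lambda_{\pm\eps}}$ must then be compensated by a constant multiplier: I would set $K_{\pm\eps}=(\pm 2\eps)^{-\Lambda_{\pm\eps}} D_{\pm\eps}$, with $D_{\pm\eps}$ a bounded diagonal correction chosen so that the constant prefactor $G_{0,\pm\eps}(\pm 2\eps)^{-\Lambda_{\pm\eps}}D_{\pm\eps}$ tends to $H_{0}$ and so that the bounded part of $\Lambda_{\pm\eps}$ tends to $\Theta_{0}$.

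For the convergence step, I would show that both $Y^{(\pm\eps)}(\lambda)K_{\pm\eps}$ and $Y^{(0,k)}(\lambda)$ solve linear systems whose coefficients converge uniformly on compact subsets of $\mathscr{S}_{k}\setminus\{0\}$, and apply a Picard iteration along a path from a base point lying in $\sigma_{\pm\eps}(\eps)\cap\mathscr{S}_{k}$---nonempty by \eqref{eq:limits}---to any compact target set in the sector. This yields locally uniform convergence on $\mathscr{S}_{k}$. Any limit is a solution of \eqref{eq:gl2} on the sector enjoying the prescribed asymptotic expansion as $\lambda\to 0$, and hence equals $Y^{(0,k+1)}$ (resp.\ $Y^{(0,k)}$) by the uniqueness clause of the sectorial existence theorem.

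The main obstacle is making the uniform convergence robust across a sector of opening greater than $\pi$, where Stokes phenomenon invalidates the naive analytic continuation of an asymptotic expansion. The direction $\arg(\eps)$ along which $\eps\to 0$ is decisive: it fixes the position of $\sigma_{\pm\eps}(\eps)$ relative to the Stokes rays $r_{i,j}$ of \eqref{eq:gl2} and thereby selects which element of the countable family $\{Y^{(0,k)}\}_{k\in\mathbb{Z}}$ arises in the limit; this is precisely the mechanism that will later produce two distinct Stokes matrices of the confluent system from limits along opposite rays of $\arg(\eps)$. Controlling the exponentially small ambiguities intrinsic to irregular singularities, so that the limit does not slip between two distinct sectorial solutions on the two sides of a Stokes ray, is the core technical input of Glutsyuk's argument.
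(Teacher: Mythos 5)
First, a point of comparison: the paper gives no proof of Theorem \ref{theorem:gl} at all --- it is quoted as an existence result from Glutsyuk \cite{glutsyuk}, and the authors' own contribution begins afterwards (computing the matrices $K_{\pm\eps}$ explicitly in the hypergeometric case). So there is no internal argument to match yours against; what can be judged is whether your sketch would stand as a proof of Glutsyuk's theorem, and as it stands it would not.

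There are three concrete gaps. (i) The matching step is not correct as described: since $\Lambda_{\pm\eps}$ is the exponent of the residue $A(\pm\eps,\eps)/(\pm 2\eps)$, one has $\Lambda_{\pm\eps}\approx\pm\,\mathrm{diag}(\mu_{1},\mu_{2})/(2\eps)$, so the factor $\exp\bigl(\Lambda_{\pm\eps}\log(1\mp\eps/\lambda)\bigr)$ produces only $\exp\bigl(-\mathrm{diag}(\mu_{1},\mu_{2})/(2\lambda)\bigr)(1+o(1))$, i.e.\ only part of the exponential content of $Y^{(0,k)}$; the remainder must be generated by the series $\sum_{n}G_{n,\pm\eps}(\lambda\mp\eps)^{n}$, whose coefficients degenerate as $\eps\to 0$ and whose disk of convergence has radius less than $2|\eps|$ and shrinks to a point. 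Consequently a term-by-term limit is meaningless on the fixed-size sector $\sigma_{\pm\eps}(\eps)$: one must first analytically continue $Y^{(\pm\eps)}$ out of $\Omega_{\pm\eps}$ and control that continuation uniformly in $\eps$, and this is precisely the hard content of Glutsyuk's theorem, not a detail. (ii) The Picard/continuous-dependence step is circular. Convergence of solutions of systems with uniformly convergent coefficients follows only once the (normalized) solutions converge at one base point; but the convergence of $Y^{(\pm\eps)}(\lambda_{0})K_{\pm\eps}$ at a base point is essentially the statement to be proved --- indeed the whole problem is to show that \emph{some} diagonal $K_{\pm\eps}$ makes such convergence happen, and your proposed $K_{\pm\eps}=(\pm 2\eps)^{-\Lambda_{\pm\eps}}D_{\pm\eps}$ is only a formal guess whose adequacy is never established. (iii) Even granting locally uniform convergence on compact subsets of $\mathscr{S}_{k}\setminus\{0\}$, invoking the uniqueness clause requires showing that the limit admits the prescribed asymptotic expansion as $\lambda\to 0$ \emph{uniformly on a sector of opening greater than $\pi$}; locally uniform convergence away from $\lambda=0$ does not give this, and it is exactly here that the exponentially small corrections responsible for the Stokes phenomenon could make the limit ``slip'' between $Y^{(0,k)}$ and a neighbouring sectorial solution. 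You correctly identify this issue (and the decisive role of the ray $\arg\eps$ and of condition (\ref{eq:limits})) in your final paragraph, but you acknowledge it rather than resolve it, so the core of the proof is missing.
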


\begin{remark} It is well-known that, when solving a linear ordinary differential equation around a Fuchsian singular point, the maximal radius we may take for the neighbourhood on which we can define an analytic solution is the distance to the nearest singularity. For the perturbed equation (\ref{eq:gl}), as $\eps$ becomes arbitrarily small it is clear from the hypotheses on $A(\lambda,\eps)$ that the closest singularity to $\lambda = \pm \eps$ will be $\lambda = \mp \eps$ respectively. We have illustrated the domains $\Omega_{\pm \eps}$ in Figure \ref{fig:gl3} with the maximal radii for which it is possible to define analytic solutions. Observe that the neighbourhoods of analyticity of the fundamental solutions diminish as $\eps \rightarrow 0$. The intelligent part of restricting the fundamental solutions $Y^{(\pm \eps)}(\lambda)$ to the sectors $\sigma_{\pm \eps}(\eps)$ as drawn in Figure \ref{fig:gl2}, rather than the neighbourhoods $\Omega_{\pm \eps}$, is that the radii of these sectors need not be restricted to the distance to the nearest singularity. Indeed, by construction, the singularity $\lambda = \pm \eps$ will not be inside the sector $\sigma_{\mp \eps}(\eps)$ respectively. In particular, this means that the radii of these sectors need not vanish. \end{remark}

By the same reasoning as in the previous remark, it is without loss of generality that we may assume $\sigma_{\eps}(\eps) \cap \sigma_{-\eps}(\eps) \neq \varnothing$ for $\eps$ sufficiently close to zero. Accordingly, since we have two fundamental solutions defined on this intersection, they must be related by multiplication by a constant invertible matrix on the right, namely, 
\begin{align} Y^{(\eps)}(\lambda) = Y^{(-\eps)}(\lambda) C, \quad \quad \lambda \in \sigma_{\eps}(\eps) \cap \sigma_{-\eps}(\eps), \label{eq:connection} \end{align}
for some connection matrix $C \in \text{GL}_{2}(\mathbb{C})$. Similarly, the two fundamental solutions $Y^{(0,0)}(\lambda)$ and $Y^{(0,1)}(\lambda)$ of the confluent equation must be related to each other by multiplication by a constant invertible matrix on the right on the intersection $\mathscr{S}_{0}$ and $\mathscr{S}_{1}$, namely,
\begin{align} Y^{(0,1)}(\lambda) = Y^{(0,0)}(\lambda) S, \quad \quad \lambda \in \mathscr{S}_{0} \cap \mathscr{S}_{1}, \label{eq:stokesmatrix} \end{align}
for some Stokes matrix $S \in \text{GL}_{2}(\mathbb{C})$.

\begin{corollary} \label{corollary:glcorollary} Let the fundamental solutions $Y^{(\eps)}(\lambda)$, $Y^{(-\eps)}(\lambda)$ and $Y^{(0,k)}(\lambda)$ and the sectors $\sigma_{\eps}(\eps)$, $\sigma_{-\eps}(\eps)$ and $\mathscr{S}_{k}$ be defined as above; let $K_{\pm \eps}$ be matrices satisfying Theorem \ref{theorem:gl} and let $C$ and $S$ be the matrices defined by (\ref{eq:connection}) and (\ref{eq:stokesmatrix}) respectively. We have the following limit,
\begin{align} \lim_{\eps \rightarrow 0} K_{-\eps}^{-1} C K_{\eps} = S, \label{eq:sat} \end{align}
as $\eps$ belongs to a fixed ray. \end{corollary}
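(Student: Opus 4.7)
The plan is to combine the two statements of Theorem \ref{theorem:gl} by pivoting on the connection relation (\ref{eq:connection}) so that the conjugated matrix $K_{-\eps}^{-1} C K_\eps$ appears naturally, and then pass to the limit on a fixed point $\lambda$ lying in the overlap sector.

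First I would fix, once and for all, a point $\lambda_0 \in \mathscr{S}_{k} \cap \mathscr{S}_{k+1}$, which is a nonempty open set because the two Glutsyuk sectors each have opening greater than $\pi$ and their intersection contains a punctured neighbourhood of $\lambda=0$ with the relevant Stokes ray removed. By the limit conditions (\ref{eq:limits}) the sectors $\sigma_{\pm\eps}(\eps)$ translate to $\mathscr{S}_{k+1}$ and $\mathscr{S}_{k}$ respectively as $\eps\to 0$ along the fixed ray, hence for all $\eps$ sufficiently small on that ray we have $\lambda_0 \in \sigma_{\eps}(\eps)\cap \sigma_{-\eps}(\eps)$, so the connection relation (\ref{eq:connection}) is valid at $\lambda_0$.

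Next I would rewrite (\ref{eq:connection}) evaluated at $\lambda_0$ in the form
\begin{equation*}
Y^{(\eps)}(\lambda_0)\,K_{\eps} \;=\; \bigl(Y^{(-\eps)}(\lambda_0)\,K_{-\eps}\bigr)\,\bigl(K_{-\eps}^{-1}\,C\,K_{\eps}\bigr).
\end{equation*}
Theorem \ref{theorem:gl} now guarantees that, as $\eps\to 0$ along the fixed ray, the left-hand side converges to $Y^{(0,k+1)}(\lambda_0)$ and the first factor on the right converges to $Y^{(0,k)}(\lambda_0)$. Since $Y^{(0,k)}(\lambda_0)$ is a fundamental matrix and hence invertible, the second factor on the right must also converge, and its limit equals $Y^{(0,k)}(\lambda_0)^{-1}\,Y^{(0,k+1)}(\lambda_0)$. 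But by the defining relation (\ref{eq:stokesmatrix}) this quotient is exactly the Stokes matrix $S$, which is independent of $\lambda_0$. This gives $\lim_{\eps\to 0} K_{-\eps}^{-1} C K_{\eps} = S$.

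The step I expect to be the main obstacle is justifying the intermediate algebraic manipulation at a fixed $\lambda_0$: one needs to check that the domain of the connection relation (\ref{eq:connection}) actually contains $\lambda_0$ for all small enough $\eps$ on the prescribed ray. This is precisely what the hypothesis (\ref{eq:limits}) delivers, together with the remark preceding the corollary stating that $\sigma_{\eps}(\eps)\cap\sigma_{-\eps}(\eps)$ is nonempty for small $\eps$; the rest is merely the observation that $K_{-\eps}^{-1} C K_{\eps}$ is a constant matrix (independent of $\lambda$), so its limit can be computed by evaluating the two sides of the rewritten identity at one convenient point and using Theorem \ref{theorem:gl}.
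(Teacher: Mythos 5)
Your argument is correct and coincides with the paper's own proof of this corollary, which is carried out (in the hypergeometric case) inside the proof of Theorem \ref{main:top}: there one writes the connection relation as $Y^{(\eps)}K_{\eps}=\bigl(Y^{(-\eps)}K_{-\eps}\bigr)\bigl(K_{-\eps}^{-1}CK_{\eps}\bigr)$, invokes Theorem \ref{theorem:gl} for the two factors, and uses the invertibility of the limiting fundamental solution (Lemma \ref{lemma:elementary}) to conclude that the constant factor converges to $\bigl(Y^{(0,k)}\bigr)^{-1}Y^{(0,k+1)}=S$. Your preliminary check that a fixed $\lambda_{0}$ lies in $\sigma_{\eps}(\eps)\cap\sigma_{-\eps}(\eps)$ for all small $\eps$ is exactly the nonemptiness remark preceding the corollary, so nothing is missing.
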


In (\ref{eq:sat}) it is clear how to obtain one of the Stokes matrices at the point $\lambda=0$ of the confluent equation. In order to obtain the second Stokes matrix we take $\eps \rightarrow 0$ along the opposite ray to the one already considered. Rather than having the limits in (\ref{eq:limits}), we would now have, for example, that $\sigma_{\eps}(\eps)$ tends to $\mathscr{S}_{k}$ and $\sigma_{-\eps}(\eps)$ tends to $\mathscr{S}_{k-1}$. In this way, we use the limit in (\ref{eq:sat}) to produce the other Stokes matrix. We will explain all of these details and calculate everything explicitly for each of the cases we consider.

\subsubsection{Limits of solutions} \label{sec:331}
As outlined above, our confluence procedure is to introduce the new variable $z$ by the substitution $x = \frac{z}{\alpha}$ and take the limit $\alpha \rightarrow \infty$. For the remainder of this chapter we must be careful in which way we are taking $\alpha$ to infinity, for example it would be inconvenient for us if $\alpha$ spiralled towards infinity. We will consider two limits along fixed rays: one with $\arg(\alpha) = \frac{\pi}{2}$ and the other with $\arg(\alpha) = -\frac{\pi}{2}$.

\subsubsection{Obtaining the solutions $\widetilde{Y}^{(\infty,k)}(z)$} 
We now turn our attention to the main problem of how to obtain fundamental solutions at the double pole of the confluent equation from solutions at the merging simple poles of the original equation. We first examine the behaviour of the fundamental solutions at $x=\infty$, as given in (\ref{eq:yinf}). Observe that these solutions are expressed using the Gauss $\ _{2}F_{1}$ series in the variable $x^{-1} \equiv \frac{\alpha}{z}$, which diverge for $|x^{-1}|>1 \Leftrightarrow |z| < |\alpha|$. In this case, we clearly do not have uniform convergence with respect to $\alpha$ and we need to use Glutsyuk's Theorem \ref{theorem:gl}. 

The fundamental set of solutions (\ref{eq:yinf}) are written in canonical form. However, we will rewrite the solution $y_{1}^{(\infty)}(x)$ using one of Kummer relations as follows,
\begin{align} y_{1}^{(\infty)}(x) &= (-x)^{-\alpha} \ _{2}F_{1} \left(\begin{array}{c} \alpha , \ \alpha+1-\gamma \\ \alpha+1-\beta \end{array} ; x^{-1} \right), &&x \in \Omega_{\infty}, \nonumber \\
&= (-x)^{\beta - \gamma}(1-x)^{\gamma-\alpha-\beta} \ _{2}F_{1} \left(\begin{array}{c} 1-\beta , \ \gamma-\beta \\ \alpha+1-\beta \end{array} ; x^{-1} \right), &&x \in \widehat{\Omega}_{\infty}, \label{eq:kummerrelation} \end{align}
where the new domain $\widehat{\Omega}_{\infty}$ is defined as,
\[\widehat{\Omega}_{\infty} = \left\{x : |x|>1, \ -\pi \leq \text{arg}(-x) < \pi, \ -\pi \leq \text{arg}(1-x) < \pi\right\}.\]
There is no need to rewrite the solution $y_{2}^{(\infty)}(x)$ as given in (\ref{eq:yinf}) as it is already in a suitable form, this is explained in Lemma \ref{lemma:obtaining} below. We note that the above two forms of the solution $y_{1}^{(\infty)}(x)$ are equivalent on the domain $\Omega_{\infty} \cap \widehat{\Omega}_{\infty}$. The condition imposed on arg$(1-x)$ in $\widehat{\Omega}_{\infty}$ is only necessary to deal with the term $(1-x)^{\gamma-\alpha-\beta}$. After making the substitution $x=\frac{z}{\alpha}$ and taking the limit $\alpha \rightarrow \infty$we have
\begin{align} 
\left(1-\frac{z}{\alpha}\right)^{\gamma-\alpha-\beta} &= \text{exp}\left((\gamma-\alpha-\beta)\log\left(1-\frac{z}{\alpha}\right) \right), \nonumber \\
&= \text{exp}\left((\gamma-\alpha-\beta) \left( -\frac{z}{\alpha} + \mathcal{O}\left(\alpha^{-2}\right)\right)\right), \nonumber \\
&= e^{z} \left(1+\mathcal{O}\left(\alpha^{-1}\right)\right). \label{eq:k3} 
\end{align}
This computation shows how to asymptotically pass from power-like behaviour to exponential behaviour as $\alpha \rightarrow \infty$. Moreover, with this new form of $y_{1}^{(\infty)}(x)$ we are ready to state the following lemma. 

\begin{lemma} \label{lemma:obtaining} Let $y_{2}^{(\infty)}(x)$ be given by (\ref{eq:yinf}) and $y_{1}^{(\infty)}(x)$ be given in its new form by (\ref{eq:kummerrelation}). After the substitution $x=\frac{z}{\alpha}$, the terms of these series tend to the terms in the formal series solutions $\tilde{y}_{1,f}^{(\infty)}(z)$ and $\tilde{y}_{2,f}^{(\infty)}(z)$ as given by (\ref{eq:yformal}), namely we have the following limits: 
\begin{align}&\lim_{\alpha \rightarrow \infty} \frac{(1-\beta)_{n}(\gamma-\beta)_{n}\alpha^{n}}{(\alpha+1-\beta)_{n}n!z^{n}} = \frac{(\gamma-\beta)_{n}(1-\beta)_{n}}{n!z^{n}}, \nonumber \\
&\lim_{\alpha \rightarrow \infty} \frac{(\beta)_{n} (\beta+1-\gamma)_{n}\alpha^{n}}{(\beta+1-\alpha)_{n}n!z^{n}} = (-1)^{n}\frac{(\beta)_{n}(\beta+1-\gamma)_{n}}{n!z^{n}}. \nonumber \end{align} \end{lemma}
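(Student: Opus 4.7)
The proof will reduce to two elementary limits for ratios of Pochhammer symbols, so there is no real analytic subtlety here beyond bookkeeping. The plan is to write out each side of the two claimed limits as a finite product and extract the leading $\alpha$-behaviour.

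For the first identity, I would expand
\[(\alpha+1-\beta)_n \;=\; \prod_{k=1}^{n}(\alpha+k-\beta) \;=\; \alpha^{n}\prod_{k=1}^{n}\left(1+\frac{k-\beta}{\alpha}\right).\]
Since $n$ is a fixed nonnegative integer, each of the $n$ factors in the product on the right tends to $1$ as $\alpha\to\infty$, so $\alpha^{n}/(\alpha+1-\beta)_n\to 1$. Multiplying by the $\alpha$-independent factor $(1-\beta)_n(\gamma-\beta)_n/(n!\,z^{n})$ yields the first claim.

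For the second identity, I would do the symmetric calculation but factor out $-\alpha$ instead of $\alpha$:
\[(\beta+1-\alpha)_n \;=\; \prod_{k=1}^{n}(\beta+k-\alpha) \;=\; (-\alpha)^{n}\prod_{k=1}^{n}\left(1-\frac{\beta+k}{\alpha}\right),\]
so $\alpha^{n}/(\beta+1-\alpha)_n\to 1/(-1)^{n}=(-1)^{n}$ as $\alpha\to\infty$. Multiplying by $(\beta)_n(\beta+1-\gamma)_n/(n!\,z^{n})$ gives exactly the second asserted limit. Both steps use only that $n$ is fixed and independent of $\alpha$, together with continuity of finite products.

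The only point where one needs to be mildly careful is to keep track of which factor of $\alpha^{n}$ is absorbed into $z^{n}$ through the substitution $x^{-1}=\alpha/z$: the Gauss $_{2}F_{1}$ series in $x^{-1}$ produces the factor $\alpha^{n}$ in the numerator, and this factor is precisely what cancels against $(\alpha\pm\cdots)_n$ in the denominator to leave behind the terms of the formal $_{2}F_{0}$ series defining $\tilde{y}_{1,f}^{(\infty)}(z)$ and $\tilde{y}_{2,f}^{(\infty)}(z)$ as in \eqref{eq:yformal}. There is no genuine analytic obstacle: the lemma is an algebraic statement at the level of individual coefficients of the (formal, divergent) series, and the passage from power-like to exponential behaviour at the level of the prefactors is handled separately by the computation \eqref{eq:k3} already recorded in the text.
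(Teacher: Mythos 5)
Your proof is correct and follows essentially the same route as the paper, which also argues by direct computation from the asymptotics $\alpha^{n}/(\alpha+1-\beta)_{n}=1+\mathcal{O}(\alpha^{-1})$ and $\alpha^{n}/(\beta+1-\alpha)_{n}=(-1)^{n}+\mathcal{O}(\alpha^{-1})$; your explicit factorisation of the Pochhammer symbols merely spells out the same cancellation in slightly more detail.
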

\begin{proof} By direct computation, using 
\begin{align} \frac{\alpha^{n}}{(\alpha+1-\beta)_{n}} = 1 + \mathcal{O}\left(\alpha^{-1}\right) \quad \text{and} \quad \frac{\alpha^{n}}{(\beta+1-\alpha)_{n}} = (-1)^{n} + \mathcal{O}\left(\alpha^{-1}\right).  \nonumber \end{align} \end{proof} 

\begin{remark} \label{remark:r1} Lemma \ref{lemma:obtaining} is stated in terms of the solutions of the \textit{scalar} hypergeometric equations (\ref{eq:gauss}) and (\ref{eq:kummer}). From the viewpoint of working with the $(2 \times 2)$ equations (\ref{eq:hg1}) and (\ref{eq:chg1}), we rewrite the solution $Y^{(\infty)}(x)$, as given in (\ref{eq:infinity}), as follows,
\begin{align} Y^{(\infty)}(x) &= R_{\infty} \sum_{n=0}^{\infty} g_{n,\infty}x^{-n} (-x)^{-\Theta_{\infty}}, &&x \in \Omega_{\infty}, \nonumber \\
&= R_{\infty} \sum_{n=0}^{\infty} \widehat{g}_{n,\infty}x^{-n} (-x)^{-\Theta_{\infty}-\Theta_{1}}(1-x)^{\Theta_{1}}, &&x \in \widehat{\Omega}_{\infty}, \label{eq:kummerrelation2} \end{align}
where $\widehat{g}_{0,\infty}=I$ and we find all other coefficients $\widehat{g}_{n,\infty}$, $n \geq 1$, from the recursive relation,
\[\  n \widehat{g}_{n,\infty} + \left[\widehat{g}_{n, \infty}, \Theta_{\infty Y} \right] = -R_{\infty Y}^{-1}A_{1Y}R_{\infty Y}\sum_{l=0}^{n-1}\widehat{g}_{l,\infty} + \sum_{l=0}^{n-1}\widehat{g}_{l,\infty}\Theta_{1}.\]
This recursion equation only differs from that for $g_{n,\infty}$, given in the proof of Lemma \ref{lemma:cup}, by the final summation term. We find the solution to this equation is, 
\begin{align} \widehat{g}_{n,\infty} = \left(\begin{array}{cc} \frac{(1-\beta)_{n}(\gamma-\beta)_{n}}{(\alpha+1-\beta)_{n}n!} & - \frac{(\beta)_{n-1}(\beta+1-\gamma)_{n-1}}{(\beta+1-\alpha)_{n-1}(n-1)!} \\ \frac{\alpha(1-\beta)(\beta-\gamma)(\alpha+1-\gamma)}{(\alpha-\beta)(\alpha+1-\beta)^{2}(\alpha+2-\beta)} \frac{(2-\beta)_{n-1}(\gamma+1-\beta)_{n-1}}{(\alpha+3-\beta)_{n-1}(n-1)!} & \frac{(\beta-1)_{n}(\beta-\gamma)_{n}}{(\beta-\alpha-1)_{n}n!} \end{array} \right). \label{eq:gninf} \end{align}
The transformation (\ref{eq:kummerrelation2}) is analogous to Kummer relation (\ref{eq:kummerrelation}). We note that, 
\begin{align} Y^{(\infty)}\left(\frac{z}{\alpha}\right) &= R_{\infty} \sum_{n=0}^{\infty} \widehat{g}_{n,\infty}\alpha^{n}z^{-n} \left(\begin{array}{cc} (-\alpha)^{\gamma-\beta}z^{\beta-\gamma}\left(1-\frac{z}{\alpha}\right)^{\gamma-\alpha-\beta} & 0 \\ 0 & (-\alpha)^{\beta-1}z^{1-\beta} \end{array} \right), \nonumber \\
&\equiv R_{\infty} \left(\begin{array}{cc} 1 & 0 \\ 0 & \alpha^{-1} \end{array} \right) \left(\begin{array}{cc} 1 & 0 \\ 0 & \alpha \end{array} \right) \sum_{n=0}^{\infty} \widehat{g}_{n,\infty}\alpha^{n}z^{-n} \left(\begin{array}{cc} 1 & 0 \\ 0 & \alpha^{-1} \end{array} \right) \nonumber \\
&\quad \quad \quad \quad \quad \quad \quad \quad \left(\begin{array}{cc} z^{\beta-\gamma}\left(1-\frac{z}{\alpha}\right)^{\gamma-\alpha-\beta} & 0 \\ 0 & z^{1-\beta} \end{array} \right) \left(\begin{array}{cc} (-\alpha)^{\gamma-\beta} & 0 \\ 0 & -(-\alpha)^{\beta} \end{array} \right). \nonumber \end{align}
The limits analogous to those in Lemma (\ref{lemma:obtaining}) are stated as follows: we have the following limit of the leading matrix, 
\begin{align} \lim_{\alpha \rightarrow \infty} R_{\infty} \left(\begin{array}{cc} 1 & 0 \\ 0 & \alpha^{-1} \end{array} \right) &= \lim_{\alpha \rightarrow \infty} \left(\begin{array}{cc} 1 & 0 \\ 0 & \frac{(\beta-\alpha)(\alpha+1-\beta)}{\alpha(\beta-1)(\beta-\gamma)}\end{array}\right)\left(\begin{array}{cc} 1 & 0 \\ 0 & \alpha^{-1} \end{array} \right), \nonumber \\
&= \left(\begin{array}{cc} 1 & 0\\ 0 & \frac{-1}{(\beta-1)(\beta-\gamma)}\end{array} \right) = \widetilde{R}_{\infty}, \nonumber \end{align}
and for the terms of the new series,
\[\lim_{\alpha \rightarrow \infty} \left(\begin{array}{cc} 1 & 0 \\ 0 & \alpha \end{array} \right) \alpha^{n} \widehat{g}_{n,\infty} \left(\begin{array}{cc} 1 & 0 \\ 0 & \alpha^{-1} \end{array} \right) = h_{n,\infty},\]
where $\widehat{g}_{n,\infty}$ and $h_{n,\infty}$ are given by (\ref{eq:gninf}) and (\ref{eq:hninf}) respectively. Hence, we understand that a \textit{term-by-term} limit of the solution,
\[Y^{(\infty)}\left(\frac{z}{\alpha}\right) \ \left(\begin{array}{cc} (-\alpha)^{\beta-\gamma} & 0 \\ 0 & -(-\alpha)^{-\beta}\end{array} \right),\]
produces the formal solution $\widetilde{Y}^{(\infty)}_{f}(z)$, which is analogous to (\ref{eq:chgasy2}). \end{remark}

We now turn our attention to the fundamental solutions at $x=1$, as given in canonical form in (\ref{eq:y1}). Observe that these solutions are expressed using Gauss hypergeometric $\ _{2}F_{1}$ series in the variable $(1-x) \equiv (1-\frac{z}{\alpha})$, which diverge for $|1-x|>1 \Leftrightarrow |z-\alpha|>|\alpha|$. As with the fundamental solutions at $x=\infty$, we do not have uniform convergence with respect to $\alpha$ here. Rather than keeping these solutions in canonical form, we use two more of Kummer relations to rewrite them as follows,
\begin{align} y_{1}^{(1)}(x) &= (1-x)^{\gamma-\alpha-\beta} \ _{2}F_{1}\left(\begin{array}{c} \gamma-\alpha , \ \gamma - \beta \\ \gamma+1-\alpha-\beta \end{array} ; 1-x \right) &&x \in \Omega_{1}, \nonumber \\
&= x^{\beta-\gamma}(1-x)^{\gamma-\alpha-\beta} \ _{2}F_{1}\left(\begin{array}{c} \gamma-\beta , \ 1-\beta \\ \gamma+1-\alpha-\beta \end{array} ; 1-x^{-1} \right), &&x \in \widehat{\Omega}_{1}, \label{eq:kummerrelation3} \\
y_{2}^{(1)}(x) &= \ _{2}F_{1} \left(\begin{array}{c} \alpha , \ \beta \\ \alpha+\beta+1-\gamma \end{array} ; 1-x \right) &&x \in \Omega_{1}, \nonumber \\
&= x^{-\beta} \ _{2}F_{1} \left(\begin{array}{c} \beta+1-\gamma , \ \beta \\ \alpha+\beta+1-\gamma \end{array} ; 1-x^{-1} \right), &&x \in \widehat{\Omega}_{1}, \label{eq:kummerrelation4} \end{align}
where the new domain $\widehat{\Omega}_{1}$ is defined as,
\[\widehat{\Omega}_{1} = \left\{ x : \left|1-x^{-1}\right| < 1, \ -\pi \leq \text{arg}(x) < \pi, \ -\pi \leq \text{arg}(1-x) < \pi\right\}.\]
We note that the two forms of these solutions are equivalent on the domain $\Omega_{1}\cap\widehat{\Omega}_{1}$. There is a very simple philosophical reason why we rewrite the series in these solutions with $(1-x^{-1})^{n}$, rather than $(1-x)^{n}$: after the change of variable $x=\frac{z}{\alpha}$, we want to produce a formal series in $z^{-n}$. Similarly as before, the computations ending in (\ref{eq:k3}) show how the solution $y_{1}^{(1)}(x)$ asymptotically passes from power-like behaviour to exponential behaviour as $\alpha \rightarrow \infty$. Moreover, the terms of the series in these new forms of $y_{1}^{(1)}(x)$ and $y_{2}^{(1)}(x)$ satisfy the lemma below.
\begin{lemma} \label{lemma:obtaining2} Let $y_{1}^{(1)}(x)$ and $y_{2}^{(1)}(x)$ be given in their new forms by (\ref{eq:kummerrelation3}) and (\ref{eq:kummerrelation4}) respectively. After the substitution $x = \frac{z}{\alpha}$, the terms of these series tend to the terms in the formal series solutions $\tilde{y}_{1,f}^{(\infty)}(z)$ and $\tilde{y}_{2,f}^{(\infty)}(z)$ as given by (\ref{eq:yformal}), namely we have the following limits:
\begin{align} &\lim_{\alpha \rightarrow \infty} \frac{(\gamma-\beta)_{n}(1-\beta)_{n}(z-\alpha)^{n}}{(\gamma+1-\alpha-\beta)_{n}n!z^{n}} = \frac{(\gamma-\beta)_{n}(1-\beta)_{n}}{n!z^{n}}, \nonumber \\
&\lim_{\alpha \rightarrow \infty} \frac{(\beta+1-\gamma)_{n}(\beta)_{n}(z-\alpha)^{n}}{(\alpha+\beta+1-\gamma)_{n}n!z^{n}} = (-1)^{n}\frac{(\beta)_{n}(\beta+1-\gamma)_{n}}{n!z^{n}}. \nonumber \end{align} \end{lemma}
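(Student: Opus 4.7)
The proof plan is to treat Lemma~\ref{lemma:obtaining2} as the direct analogue of Lemma~\ref{lemma:obtaining}, reducing each identity to a single asymptotic computation about a ratio of Pochhammer symbols. First I would observe that, in each of the two stated limits, the factors $(\gamma-\beta)_n$, $(1-\beta)_n$, $(\beta)_n$, $(\beta+1-\gamma)_n$, $n!$ and $z^n$ are $\alpha$-independent and may be cancelled with their counterparts on the right-hand side. What remains to prove is therefore the two elementary statements
\[
\lim_{\alpha\to\infty}\frac{(z-\alpha)^n}{(\gamma+1-\alpha-\beta)_n}=1,\qquad
\lim_{\alpha\to\infty}\frac{(z-\alpha)^n}{(\alpha+\beta+1-\gamma)_n}=(-1)^n,
\]
in full analogy with the two asymptotic relations used at the end of the proof of Lemma~\ref{lemma:obtaining}.

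To prove these two relations I would expand both Pochhammer symbols as finite products and extract the dominant power of $\alpha$. For the first, I write
\[
(\gamma+1-\alpha-\beta)_n=\prod_{k=0}^{n-1}(\gamma+1-\alpha-\beta+k)=(-\alpha)^n\prod_{k=0}^{n-1}\left(1-\tfrac{\gamma+1-\beta+k}{\alpha}\right)=(-\alpha)^n\bigl(1+\mathcal{O}(\alpha^{-1})\bigr),
\]
and similarly $(z-\alpha)^n=(-\alpha)^n(1-z/\alpha)^n=(-\alpha)^n(1+\mathcal{O}(\alpha^{-1}))$; taking the quotient yields $1+\mathcal{O}(\alpha^{-1})\to 1$. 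For the second, the same factorisation gives $(\alpha+\beta+1-\gamma)_n=\alpha^n(1+\mathcal{O}(\alpha^{-1}))$, and dividing $(z-\alpha)^n=(-\alpha)^n(1+\mathcal{O}(\alpha^{-1}))$ by it yields $(-1)^n+\mathcal{O}(\alpha^{-1})\to(-1)^n$.

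There is no real obstacle: the only care needed is to recall that, as fixed at the start of Section~\ref{sec:331}, the limit $\alpha\to\infty$ is taken along a fixed ray ($\arg(\alpha)=\pm\pi/2$), so the $\mathcal{O}(\alpha^{-1})$ error terms in the binomial-type expansions above are genuinely uniform in $n$ and $z$ bounded, and the signs of $(-\alpha)^n$ and $\alpha^n$ are unambiguously determined. Once these two asymptotic ratios are established the lemma follows by substitution, exactly as in Lemma~\ref{lemma:obtaining}.
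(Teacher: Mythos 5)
Your proposal is correct and follows essentially the same route as the paper: the paper's proof is exactly the direct computation reducing to $\frac{(z-\alpha)^{n}}{(\gamma+1-\alpha-\beta)_{n}} = 1 + \mathcal{O}\left(\alpha^{-1}\right)$ and $\frac{(z-\alpha)^{n}}{(\alpha+\beta+1-\gamma)_{n}} = (-1)^{n} + \mathcal{O}\left(\alpha^{-1}\right)$, which you establish by factoring out the dominant power of $\alpha$. Your extra remark about the limit being taken along a fixed ray is a harmless clarification consistent with the paper's setup.
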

\begin{proof} By direct computation, after expanding the powers of $(z-\alpha)$ and the Pochhammer symbols to find,
\begin{align} \frac{(z-\alpha)^{n}}{(\gamma+1-\alpha-\beta)_{n}} = 1 + \mathcal{O}\left(\alpha^{-1}\right) \quad \text{and} \quad \frac{(z-\alpha)^{n}}{(\alpha+\beta+1-\gamma)_{n}} = (-1)^{n} + \mathcal{O}\left(\alpha^{-1}\right). \nonumber \end{align} \end{proof}
This lemma shows that \textit{term-by-term} limits of the solutions,
\begin{align} y^{(1)}_{1}(z \alpha^{-1}) \ \alpha^{\beta-\gamma} \quad \text{and} \quad -y_{2}^{(1)}(z \alpha^{-1}) \ \alpha^{-\beta}, \label{eq:analogous2} \end{align}
produce the formal solutions,
\[\tilde{y}_{1,f}^{(\infty)}(z) \quad \text{and} \quad \tilde{y}_{2,f}^{(\infty)}(z),\]
respectively. The factors $\alpha^{\beta-\gamma}$ and $\alpha^{-\beta}$ in (\ref{eq:analogous2}) are necessary because of the terms,
\[x^{\beta-\gamma} \equiv z^{\beta-\gamma} \alpha^{\gamma-\beta} \quad \text{and} \quad x^{-\beta} \equiv z^{-\beta} \alpha^{\beta},\]
in the solutions $y_{1}^{(1)}(x)$ and $y_{2}^{(1)}(x)$ respectively. We note that the direction in which $\alpha \rightarrow \infty$ is not yet important for this lemma. The importance of this lemma is shown in the proof of our Main Theorem \ref{main:importance}. 

\begin{remark} \label{remark:r2} Similarly as in Remark \ref{remark:r1}, we may consider the viewpoint of working with the $(2\times2)$ equations (\ref{eq:hg1}) and (\ref{eq:chg1}) and rewrite the solution $Y^{(1)}(x)$, as given in (\ref{eq:ak2}), as follows,
\begin{align} Y^{(1)}(x) &= R_{1} \sum_{n=0}^{\infty} g_{n,1} (1-x)^{n} (1-x)^{\Theta_{1}}, &&x \in \Omega_{1}, \nonumber \\
&= R_{1} \sum_{n=0}^{\infty} \widehat{g}_{n,1} \left(1-x^{-1}\right)^{n} x^{-\Theta_{\infty}-\Theta_{1}}(1-x)^{\Theta_{1}}, &&x \in \widehat{\Omega}_{1}, \label{eq:kummerrelation5} \end{align}
where $\widehat{g}_{0,1} = I$ and we find all other coefficients $\widehat{g}_{n,1}$, $n \geq 1$, from the recursive equation,
\[\left[ \widehat{g}_{n,1} , \Theta_{1} \right] + n \widehat{g}_{n,1} = (n-1) \widehat{g}_{n-1,1} + \widehat{g}_{n-1,1}(\Theta_{1}+\Theta_{\infty}) + R_{1}^{-1}A_{0}R_{1}\widehat{g}_{n-1,1}.\]
This recursion equation differs quite significantly from that for $g_{n,1}$, given in the proof of Lemma \ref{lemma:cup}. We find the solution to this equation is, 
\begin{align} \widehat{g}_{n,1} &= \left(\begin{array}{cc} \frac{(1-\beta)_{n}(\gamma-\beta)_{n}}{(\gamma+1-\alpha-\beta)_{n}n!} & \frac{(\beta)_{n}(\beta+1-\gamma)_{n}}{(\alpha+\beta+1-\gamma)_{n}n!}-\frac{(\beta)_{n-1}(\beta+1-\gamma)_{n-1}}{(\alpha+\beta+1-\gamma)_{n-1}(n-1)!} \\ \frac{1}{\alpha} \left(\frac{(2-\beta)_{n}(\gamma+1-\beta)_{n}}{(\gamma+1-\alpha-\beta)_{n}n!}-\frac{(2-\beta)_{n-1}(\gamma+1-\beta)_{n-1}}{(\gamma+1-\alpha-\beta)_{n-1}(n-1)!}\right) & \frac{\alpha+1-\gamma}{(\beta-1)(\beta-\gamma)}\frac{(\beta-1)_{n}(\beta-\gamma)_{n}}{(\alpha+\beta+1-\gamma)_{n}n!} \end{array} \right). \label{eq:gn1} \end{align}
The transformation (\ref{eq:kummerrelation5}) is analogous to Kummer relations (\ref{eq:kummerrelation3}) and (\ref{eq:kummerrelation4}). We note that,
\begin{align} Y^{(1)}\left(\frac{z}{\alpha}\right) &= R_{1} \sum_{n=0}^{\infty} \widehat{g}_{n,1}\left(1-\frac{\alpha}{z}\right)^{n} \left(\begin{array}{cc} \alpha^{\gamma-\beta}z^{\beta-\gamma}\left(1-\frac{z}{\alpha}\right)^{\gamma-\alpha-\beta} & 0 \\ 0 & \alpha^{\beta-1}z^{1-\beta} \end{array} \right), \nonumber \\
&\equiv R_{1} \left(\begin{array}{cc} 1 & 0 \\ 0 & -\alpha^{-1} \end{array} \right)\left(\begin{array}{cc} 1 & 0 \\ 0 & -\alpha \end{array} \right), \sum_{n=0}^{\infty} \widehat{g}_{n,1} \left(1-\frac{\alpha}{z}\right)^{n} \left(\begin{array}{cc} 1 & 0 \\ 0 & -\alpha^{-1} \end{array} \right) \nonumber \\
&\quad \quad \quad \quad \quad \quad \quad \quad \left(\begin{array}{cc} z^{\beta-\gamma}\left(1-\frac{z}{\alpha}\right)^{\gamma-\alpha-\beta} & 0 \\ 0 & z^{1-\beta} \end{array} \right) \left(\begin{array}{cc} \alpha^{\gamma-\beta} & 0 \\ 0 & -\alpha^{\beta} \end{array} \right). \nonumber \end{align}
The limits analogous to those in Lemma \ref{lemma:obtaining2} are stated as follows: we have the following limit of the leading matrix,
\begin{align} \lim_{\alpha \rightarrow \infty} R_{1} \left(\begin{array}{cc} 1 & 0 \\ 0 & -\alpha^{-1}\end{array} \right) &= \lim_{\alpha \rightarrow \infty} \left(\begin{array}{cc} 1 & 1 \\ \frac{1}{\alpha} & \frac{\alpha+1-\gamma}{(\beta-1)(\beta-\gamma)}\end{array} \right) \left(\begin{array}{cc} 1 & 0 \\ 0 & -\alpha^{-1} \end{array} \right), \nonumber \\
&= \left(\begin{array}{cc} 1 & 0 \\ 0 & \frac{-1}{(\beta-1)(\beta-\gamma)} \end{array} \right) = \widetilde{R}_{\infty},\nonumber \end{align}
and for the terms of the new series, 
\begin{align} \lim_{\alpha \rightarrow \infty} \left(\begin{array}{cc} 1 & 0 \\ 0 & - \alpha \end{array} \right) (-\alpha)^{n} \widehat{g}_{n,1} \left(\begin{array}{cc} 1 & 0 \\ 0 & -\alpha^{-1} \end{array} \right) = h_{n,\infty},\nonumber \end{align}
where $\widehat{g}_{n,1}$ and $h_{n,\infty}$ are given by (\ref{eq:gn1}) and (\ref{eq:hninf}) respectivey. Hence, we understand that a \textit{term-by-term} limit of the solution,
\[Y^{(1)}\left(\frac{z}{\alpha}\right) \ \left(\begin{array}{cc} \alpha^{\beta-\gamma} & 0 \\ 0 & -\alpha^{-\beta} \end{array} \right),\]
produces the formal solution $\widetilde{Y}^{(\infty)}_{f}(z)$, which is analogous to (\ref{eq:analogous2}). \end{remark}

Having understood how to take term-by-term limits of the series solutions of Gauss equation around $x=1$ and $\infty$ to produce the formal solutions of Kummer equation around $z=\infty$, we now show how to apply Glutsyuk's Theorem \ref{theorem:gl} to Gauss hypergeometric equation. Let $\eta \in \left(0,\frac{\pi}{2}\right)$ be some fixed value. We define the following sectors,
\begin{align} \widetilde{\mathscr{S}}_{k} &:= \left\{ z : \text{arg}(z) -k \pi \in \left(\eta-\frac{\pi}{2} , \frac{3\pi}{2} - \eta \right)\right\}, \label{eq:sectors3} \end{align}
we note that if $z \in \widetilde{\mathscr{S}}_{k}$ then $z \in \widetilde{\Sigma}_{k}$. The presence of $\eta$ is to ensure that the boundaries of the sectors $\widetilde{\mathscr{S}}_{k}$ do not contain a Stokes ray, as is necessary in the hypothesis of Glutsyuk's Theorem \ref{theorem:gl}. We note that this condition is not satisfied by the sectors $\widetilde{\Sigma}_{k}$ defined in Theorem \ref{theorem:kummerst}, which are the maximal sectors on which we can define single-valued analytic fundamental solutions. \\

We also define the following sectors,
\begin{align} \sigma_{\alpha}(\alpha) &:= \left\{ z : \begin{array}{c} \left|1-\frac{\alpha}{z}\right|< |\alpha|^{2} , \ \text{arg}\left(\frac{z}{\alpha}\right) \in (\eta-\pi,\pi-\eta), \\ \text{arg}\left(1-\frac{z}{\alpha}\right) \in (\eta-\pi , \pi - \eta) \end{array} \right\}, \label{eq:sectors1} \\
\sigma_{\infty}(\alpha) &:= \left\{ z : \begin{array}{c} \text{arg}\left(-z\alpha^{-1}\right) \in (\eta -\pi , \pi-\eta), \\ \text{arg}\left(1-\frac{z}{\alpha}\right)\in(\eta-\pi,\pi-\eta) \end{array} \right\}. \label{eq:sectors2} \end{align}
We note that if $z$ is sufficiently close to $\alpha$ with $z \in \sigma_{\alpha}(\alpha)$ then $x = \frac{z}{\alpha} \in \widehat{\Omega}_{1}$ and if $z$ is sufficiently large with $z \in \sigma_{\infty}(\alpha)$ then $x = \frac{z}{\alpha} \in \widehat{\Omega}_{\infty}$. These sectors will be the new domains of our solutions $y_{1}^{(1)}(z \alpha^{-1})$, $y_{2}^{(1)}(z\alpha^{-1})$ and $y_{1}^{(\infty)}(z \alpha^{-1})$, $y_{2}^{(\infty)}(z\alpha^{-1})$ respectively, they are illustrated below.

\begin{figure}[H] \begin{center}
\includegraphics[scale=1]{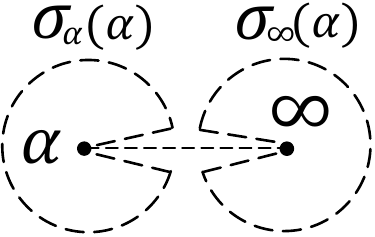}
\caption{\label{fig:sss}Sectors $\sigma_{\alpha}(\alpha)$ and $\sigma_{\infty}(\alpha)$.}
\end{center} \end{figure} 

Compared with the domains $\widehat{\Omega}_{1}$ and $\widehat{\Omega}_{\infty}$, which are disks with branch cuts, the sectors $\sigma_{\alpha}(\alpha)$ and $\sigma_{\infty}(\alpha)$ have larger radii and do not contain any part of the branch cut between $\alpha$ and $\infty$. We can analytically extend our solutions $y_{k}^{(1)}(z \alpha^{-1})$ and $y_{k}^{(\infty)}(z \alpha^{-1})$, $k=1,2$, to these larger domains because the singularity $z=\infty$ (resp. $z=\alpha$) can never lie inside the sector $\sigma_{\alpha}(\alpha)$ (resp. $\sigma_{\infty}(\alpha)$) or on its boundary. That is the key reason to restrict our solutions to sectors rather than disks. \\

We examine the sector $\sigma_{\alpha}(\alpha)$ more closely. From the first condition,
\[\left|1-\frac{\alpha}{z} \right| < |\alpha|^{2} \quad \Leftrightarrow \quad \left|\frac{1}{\alpha}-\frac{1}{z}\right| < |\alpha|,\]
observe that as $\alpha \rightarrow \infty$ the radius of this sector becomes infinite, indeed the above inequality becomes simply $|z|>0$. Furthermore, as $\alpha \rightarrow \infty$ along a ray, the base point of the sector $\sigma_{\alpha}(\alpha)$ is translated along that ray, tending to infinity. We illustrate this phenomenon in Figure \ref{fig:new} below. 

\begin{figure}[H] \begin{center}
\includegraphics[scale=.7]{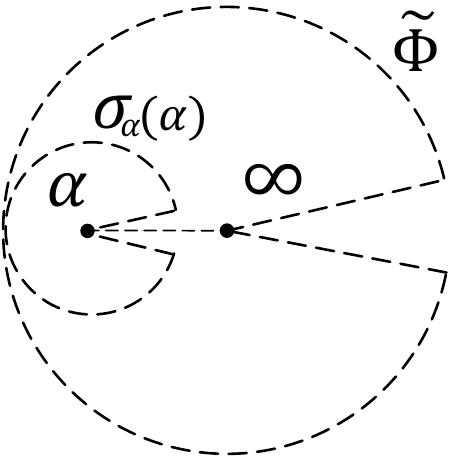}
\caption{\label{fig:new}As $\alpha \rightarrow \infty$ along a ray, the sector $\sigma_{\alpha}(\alpha)$ is translated along the branch cut and becomes in agreement with the sector $\widetilde{\Phi} := \left\{ z : \left| \text{arg}\left(\frac{z}{\alpha}\right)\right|<\pi-\eta \right\}$.}
\end{center} \end{figure} 

In the two limit directions we are concerned with, for $\arg(\alpha)=\pm\frac{\pi}{2}$, we have,
\begin{align} &\text{arg}\left(\frac{z}{\alpha}\right) \in \left(\eta-\pi, \pi-\eta \right) \quad \Leftrightarrow \quad \text{arg}(z) \in \left(\eta - \pi \pm \frac{\pi}{2} , \pi \pm \frac{\pi}{2} - \eta \right), \nonumber \end{align}
For the sector $\sigma_{\infty}(\alpha)$, whose base point is already fixed at infinity, we have, 
\begin{align} &\text{arg}\left(-\frac{z}{\alpha} \right) \in \left(\eta - \pi , \pi - \eta \right) \quad \Leftrightarrow \quad \text{arg}(z) \in \left(\eta \pm \frac{\pi}{2} , 2 \pi \pm \frac{\pi}{2} - \eta \right), \nonumber \end{align}
recall from (\ref{eq:k3}) that the condition on arg$\left(1-\frac{z}{\alpha}\right)$ in $\sigma_{\infty}(\alpha)$ does not play a role after taking the limit. With these considerations in mind, we write, 
\begin{align} \lim_{\substack{\alpha \rightarrow \infty \\ \text{arg}(\alpha) = -\frac{\pi}{2}}} \sigma_{\alpha}(\alpha) &= \widetilde{\mathscr{S}}_{-1}, &&\lim_{\substack{\alpha \rightarrow \infty \\ \text{arg}(\alpha) = -\frac{\pi}{2}}} \sigma_{\infty}(\alpha) = \widetilde{\mathscr{S}}_{0}, \nonumber \\
\lim_{\substack{\alpha \rightarrow \infty \\ \text{arg}(\alpha) = \frac{\pi}{2}}} \sigma_{\alpha}(\alpha) &= \widetilde{\mathscr{S}}_{0}, &&\lim_{\substack{\alpha \rightarrow \infty \\ \text{arg}(\alpha) = \frac{\pi}{2}}} \sigma_{\infty}(\alpha) = \widetilde{\mathscr{S}}_{1}. \nonumber \end{align}

We now apply Glutsyuk's Theorem \ref{theorem:gl} with the $(2\times2)$ hypergeometric equation (\ref{eq:hg1}) in place of the perturbed equation and the confluent hypergeometric equation (\ref{eq:chg1}) in place of the non-perturbed equation. Glutsyuk's theorem asserts the existence of invertible diagonal matrices $K^{\pm}_{\infty}(\alpha)$ and $K^{\pm}_{1}(\alpha)$ such that:
\begin{align} &\lim_{\substack{\alpha \rightarrow \infty \\ \text{arg}(\alpha) = -\frac{\pi}{2}}} \left. Y^{(1)}\left(z \alpha^{-1}\right) \right|_{z \in \sigma_{\alpha}(\alpha)} K^{-}_{1}(\alpha) = \widetilde{Y}^{(\infty,-1)}(z), \label{eq:sat5} \\
&\lim_{\substack{\alpha \rightarrow \infty \\ \text{arg}(\alpha) = -\frac{\pi}{2}}} \left. Y^{(\infty)}\left(z \alpha^{-1}\right) \right|_{z \in \sigma_{\infty}(\alpha)} K^{-}_{\infty}(\alpha) = \widetilde{Y}^{(\infty,0)}(z), \label{eq:sat6} \end{align}
uniformly for $z \in \widetilde{\mathscr{S}}_{-1}$ and $z \in \widetilde{\mathscr{S}}_{0}$ respectively, and:
\begin{align} &\lim_{\substack{\alpha \rightarrow \infty \\ \text{arg}(\alpha) = \frac{\pi}{2}}} \left. Y^{(1)}\left(z \alpha^{-1}\right) \right|_{z \in \sigma_{\alpha}(\alpha)} K^{+}_{1}(\alpha) = \widetilde{Y}^{(\infty,0)}(z), \label{eq:sat7} \\
&\lim_{\substack{\alpha \rightarrow \infty \\ \text{arg}(\alpha) =\frac{\pi}{2}}} \left. Y^{(\infty)}\left(z \alpha^{-1}\right) \right|_{z \in \sigma_{\infty}(\alpha)} K^{+}_{\infty}(\alpha) = \widetilde{Y}^{(\infty,1)}(z), \label{eq:sat8} \end{align}
uniformly for $z \in \widetilde{\mathscr{S}}_{0}$ and $z \in \widetilde{\mathscr{S}}_{1}$ respectively. We note that since we are considering two limits, namely one with $\arg(\alpha) = \frac{\pi}{2}$ and another with $\arg(\alpha) = -\frac{\pi}{2}$, we have distinguished the diagonal matrices in each case with a superscript $+$ or $-$ respectively. Due to the asymptotics of the fundamental solutions of Kummer equation as given in Theorem \ref{theorem:kummerst}, each of these four limits is asymptotic to the formal fundamental solution $\widetilde{Y}_{f}^{(\infty)}(z)$ as $z \rightarrow \infty$ with $z$ belonging to the corresponding sector. \\

Equivalently, from the viewpoint of studying the classical scalar hypergeometric equations (\ref{eq:gauss}) and (\ref{eq:kummer}), Glutsyuk's Theorem \ref{theorem:gl} asserts the existence of scalars $k_{1,\infty}^{\pm}(\alpha)$, $k_{2,\infty}^{\pm}(\alpha)$, $k_{1,1}^{\pm}(\alpha)$ and $k_{2,1}^{\pm}(\alpha)$ such that, for $j \in \{1,2\}$:
\begin{align} &\lim_{\substack{\alpha \rightarrow \infty \\ \text{arg}(\alpha) = -\frac{\pi}{2}}} \left. y_{j}^{(1)}(z \alpha^{-1})\right|_{z \in \sigma_{\alpha}(\alpha)} \ k_{j,1}^{-}(\alpha) = \tilde{y}_{j}^{(\infty,-1)}(z), \label{eq:sat1} \\ 
&\lim_{\substack{\alpha \rightarrow \infty \\ \text{arg}(\alpha) = -\frac{\pi}{2}}} \left. y_{j}^{(\infty)}(z \alpha^{-1})\right|_{z \in \sigma_{\infty}(\alpha)} \ k_{j,\infty}^{-}(\alpha) = \tilde{y}_{j}^{(\infty,0)}(z), \label{eq:sat2} \end{align}
uniformly for $z \in \widetilde{\mathscr{S}}_{-1}$ and $\widetilde{\mathscr{S}}_{0}$ respectively, and: 
\begin{align} &\lim_{\substack{\alpha \rightarrow \infty \\ \text{arg}(\alpha) = \frac{\pi}{2}}} \left. y_{j}^{(1)}(z \alpha^{-1})\right|_{z \in \sigma_{\alpha}(\alpha)} \ k_{j,1}^{+}(\alpha) = \tilde{y}_{j}^{(\infty,0)}(z), \label{eq:sat3} \\ 
&\lim_{\substack{\alpha \rightarrow \infty \\ \text{arg}(\alpha) = \frac{\pi}{2}}} \left. y_{j}^{(\infty)}(z \alpha^{-1})\right|_{z \in \sigma_{\infty}(\alpha)} \ k_{j,\infty}^{+}(\alpha) = \tilde{y}_{j}^{(\infty,1)}(z), \label{eq:sat4} \end{align}
uniformly $z \in \widetilde{\mathscr{S}}_{0}$ and $\widetilde{\mathscr{S}}_{1}$ respectively. \\

Having applied Glutsyuk's theorem to our confluence of the hypergeometric equation, we now focus on understanding what we can deduce about these scalars $k_{j,\infty}^{\pm}(\alpha)$ and $k_{j,1}^{\pm}(\alpha)$, $j=1,2$. We are ready to state our first main theorem. 

\begin{theorem} \label{main:importance} If $k_{j,\infty}^{\pm}(\alpha)$ and $k_{j,1}^{\pm}(\alpha)$ are scalars satisfying (\ref{eq:sat1})-(\ref{eq:sat4}), then these numbers satisfy the following limits,
\begin{align} 
\lim_{ \substack{ \alpha\rightarrow \infty \\ \text{arg}(\alpha) = \pm \frac{\pi}{2}}} k_{1,\infty}^{\pm}(\alpha) \ (-\alpha)^{\gamma-\beta}= 1, \label{eq:n1} \\
\lim_{ \substack{ \alpha\rightarrow \infty \\ \text{arg}(\alpha) = \pm \frac{\pi}{2}}} -k_{2,\infty}^{\pm}(\alpha) \ (-\alpha)^{\beta}= 1, \label{eq:n2} \\
\lim_{ \substack{ \alpha\rightarrow \infty \\ \text{arg}(\alpha) = \pm \frac{\pi}{2}}} k_{1,1}^{\pm}(\alpha) \ \alpha^{\gamma-\beta}= 1, \label{eq:n3} \\
\lim_{ \substack{ \alpha\rightarrow \infty \\ \text{arg}(\alpha) = \pm \frac{\pi}{2}}} -k_{2,1}^{\pm}(\alpha) \ \alpha^{\beta}= 1. \label{eq:n4} \end{align}\end{theorem}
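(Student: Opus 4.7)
The plan is to compute the $\alpha\to\infty$ limit of each of the eight scalar solutions $y_j^{(k)}(z/\alpha)$ separately, identify the explicit $\alpha$-dependent prefactor that appears when we substitute $x=z/\alpha$ into the Kummer-rewritten forms, and then read off the scaling of $k_{j,k}^{\pm}(\alpha)$ that is forced by the Glutsyuk limits \eqref{eq:sat1}--\eqref{eq:sat4}. Beyond the termwise limits of Lemmas \ref{lemma:obtaining} and \ref{lemma:obtaining2}, the only additional input needed is the asymptotic identity \eqref{eq:k3}, which turns $(1-z/\alpha)^{\gamma-\alpha-\beta}$ into $e^z$.

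First I would tackle the solutions at infinity. Using the Kummer-rewritten form \eqref{eq:kummerrelation} of $y_1^{(\infty)}$ and the canonical form \eqref{eq:yinf} of $y_2^{(\infty)}$, substitute $x=z/\alpha$ and separate the $\alpha$-dependent power prefactor: it is $(-z/\alpha)^{\beta-\gamma}(1-z/\alpha)^{\gamma-\alpha-\beta}$ in the first case and $(-z/\alpha)^{-\beta}$ in the second. With the branch conventions on $\sigma_\infty(\alpha)$, these factor as $(-\alpha)^{\gamma-\beta}z^{\beta-\gamma}(1-z/\alpha)^{\gamma-\alpha-\beta}$ and $(-\alpha)^{\beta}z^{-\beta}$ respectively. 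Applying Lemma \ref{lemma:obtaining} termwise to the ${}_2F_1$ series in $\alpha/z$ and \eqref{eq:k3} to the remaining exponential factor yields
\begin{align*}
y_1^{(\infty)}(z/\alpha)&=(-\alpha)^{\gamma-\beta}\,\widetilde{y}_{1,f}^{(\infty)}(z)\,(1+o(1)), \\
y_2^{(\infty)}(z/\alpha)&=-(-\alpha)^{\beta}\,\widetilde{y}_{2,f}^{(\infty)}(z)\,(1+o(1)).
\end{align*}
Since Theorem \ref{theorem:gl} asserts that $y_j^{(\infty)}(z/\alpha)\,k_{j,\infty}^{\pm}(\alpha)$ converges uniformly on the limiting sector to the true solution $\widetilde{Y}^{(\infty,0)}$ or $\widetilde{Y}^{(\infty,1)}$, which by Theorem \ref{theorem:kummerst} is itself asymptotic to $\widetilde{y}_{j,f}^{(\infty)}(z)$ in that sector, matching prefactors forces \eqref{eq:n1} and \eqref{eq:n2}. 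The analogous argument for the solutions at $x=1$, now using the Kummer-rewritten forms \eqref{eq:kummerrelation3}--\eqref{eq:kummerrelation4}, produces the prefactors $\alpha^{\gamma-\beta}z^{\beta-\gamma}$ and $\alpha^{\beta}z^{-\beta}$; Lemma \ref{lemma:obtaining2} combined with \eqref{eq:k3} then gives \eqref{eq:n3} and \eqref{eq:n4}.

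The main obstacle is the bookkeeping of branches. The factorisations $(-z/\alpha)^{\beta-\gamma}=(-\alpha)^{\gamma-\beta}z^{\beta-\gamma}$ and $(z/\alpha)^{\beta-\gamma}=\alpha^{\gamma-\beta}z^{\beta-\gamma}$ must be verified separately on the two rays $\arg(\alpha)=\pm\pi/2$, using the argument restrictions built into $\sigma_\alpha(\alpha)$, $\sigma_\infty(\alpha)$ (arguments in $(\eta-\pi,\pi-\eta)$) and into the limiting sectors $\widetilde{\mathscr{S}}_k$, to rule out spurious factors of $e^{\pm 2\pi i(\gamma-\beta)}$ or $e^{\pm 2\pi i\beta}$. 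A second, more structural point is that Lemmas \ref{lemma:obtaining}--\ref{lemma:obtaining2} only give termwise convergence of the hypergeometric series, whereas Theorem \ref{theorem:gl} requires genuine uniform convergence on the limiting sector; this is precisely why the sectors $\sigma_\alpha(\alpha)$, $\sigma_\infty(\alpha)$ are constructed with radii that do not shrink as $\alpha\to\infty$ and with openings that avoid both the merging singularity and the Stokes rays.
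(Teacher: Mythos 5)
There is a genuine gap at the heart of your argument. The display
\begin{align*}
y_1^{(\infty)}(z/\alpha)&=(-\alpha)^{\gamma-\beta}\,\widetilde{y}_{1,f}^{(\infty)}(z)\,(1+o(1))
\end{align*}
cannot be taken literally, because $\widetilde{y}_{1,f}^{(\infty)}(z)$ is a \emph{divergent} formal series, not a function: the rewritten Gauss series in \eqref{eq:kummerrelation} converges only for $|z|>|\alpha|$, a region that recedes to infinity as $\alpha\to\infty$, while its termwise limit (Lemma \ref{lemma:obtaining}) diverges for every $z$. So the termwise limits plus \eqref{eq:k3} do not give you a functional relation with a uniform $o(1)$ on the limiting sector, and the subsequent step ``matching prefactors forces \eqref{eq:n1}'' is precisely the nontrivial content of the theorem, not a consequence of what precedes it. To match against $\widetilde{y}_{j,f}^{(\infty)}$ you must compare \emph{asymptotic expansions as $z\to\infty$}, and an asymptotic relation at a single (or any fixed) $z$ carries no information, so there is no direct way to ``divide out'' the formal series and read off the scaling of $k_{j,\infty}^{\pm}(\alpha)$. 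Your closing paragraph acknowledges that the Lemmas only give termwise convergence, but the non-shrinking sectors only secure the existence of the Glutsyuk limits; they do not supply the missing mechanism for identifying the normalising constants.

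The paper closes exactly this gap by arguing at the level of asymptotic coefficients rather than prefactors: starting from \eqref{eq:sat2}, \eqref{eq:sat4} and the asymptotics \eqref{eq:chgasy}, one knows that $\lim_{\alpha} y_1^{(\infty)}(z\alpha^{-1})\,k_{1,\infty}^{\pm}(\alpha)$ is asymptotic to $\tilde{y}_{1,f}^{(\infty)}(z)$ on a closed subsector; setting $w=z^{-1}$ and invoking the classical fact (Lemma \ref{lemma:wasow}) that the $n$-th asymptotic coefficient is recovered as $\frac{1}{n!}\lim_{w\to 0} f^{(n)}(w)$, one equates, for each $n$, the coefficient $\frac{(\gamma-\beta)_n(1-\beta)_n}{n!}$ with the limit of $\frac{(1-\beta)_n(\gamma-\beta)_n\alpha^n}{(\alpha+1-\beta)_n n!}(-\alpha)^{\gamma-\beta}k_{1,\infty}^{\pm}(\alpha)$, the \emph{uniformity} of the Glutsyuk limits being what licenses the interchange of $\lim_{\alpha}$ with $\frac{d^n}{dw^n}$ and $\lim_{w\to 0}$, and analyticity licensing the interchange of derivative and series. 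Since $\alpha^n/(\alpha+1-\beta)_n\to 1$, this yields \eqref{eq:n1}, and similarly \eqref{eq:n2}--\eqref{eq:n4}. If you want to salvage your outline, you must either reproduce this coefficient-extraction argument (or an equivalent one, e.g.\ a uniform-in-$\alpha$ remainder estimate for the truncated series on the sector), rather than asserting the prefactor matching; as written, the step that produces the limits \eqref{eq:n1}--\eqref{eq:n4} is missing.
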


\begin{proof} In either case $\arg(\alpha) = \frac{\pi}{2}$ or $-\frac{\pi}{2}$, let $\mathscr{S}^{*}$ be a closed, proper subsector of $\widetilde{\mathscr{S}}_{1}$ or $\widetilde{\mathscr{S}}_{0}$ respectively. Combining the statements (\ref{eq:sat2}) and (\ref{eq:sat4}), together with the asymptotic behaviour (\ref{eq:chgasy}), we have,
\begin{align} \lim_{\substack{\alpha \rightarrow \infty \\ \text{arg}(\alpha) = \pm \frac{\pi}{2}}} \left. y_{1}^{(\infty)}(z \alpha^{-1})\right|_{z \in \sigma_{\infty}(\alpha)} \ k_{1,\infty}^{\pm}(\alpha) \sim \tilde{y}_{1,f}^{(\infty)}(z), \quad \text{as } z \rightarrow \infty, \ z \in \mathscr{S}^{*}. \label{eq:combining} \end{align}
We now re-write $y_{1}^{(\infty)}(z \alpha^{-1})$ using Kummer transformation as in (\ref{eq:kummerrelation}),
\begin{align} &\left. y_{1}^{(\infty)}\left(z \alpha^{-1}\right) \right|_{z \in \sigma_{\infty}(\alpha)} = z^{\beta-\gamma}(-\alpha)^{\gamma-\beta}\left(1-\frac{z}{\alpha}\right)^{\gamma-\alpha-\beta} \left. \sum_{n=0}^{\infty}\frac{(1-\beta)_{n}(\gamma-\beta)_{n}\alpha^{n}}{(\alpha+1-\beta)_{n}n!z^{n}} \right|_{z \in \sigma_{\infty}(\alpha)}. \nonumber \end{align}
We therefore deduce,
\begin{align} &\lim_{\substack{\alpha \rightarrow \infty \\ \text{arg}(\alpha) = \pm \frac{\pi}{2}}} z^{\beta-\gamma}(-\alpha)^{\gamma-\beta}\left(1-\frac{z}{\alpha}\right)^{\gamma-\alpha-\beta} \left. \sum_{n=0}^{\infty}\frac{(1-\beta)_{n}(\gamma-\beta)_{n}\alpha^{n}}{(\alpha+1-\beta)_{n}n!z^{n}} \right|_{z \in \sigma_{\infty}(\alpha)}k_{1,\infty}^{\pm}(\alpha) \nonumber \\
&\quad = \lim_{\substack{\alpha \rightarrow \infty \\ \text{arg}(\alpha) = \pm \frac{\pi}{2}}} z^{\beta-\gamma}(-\alpha)^{\gamma-\beta}e^{z} \left. \sum_{n=0}^{\infty}\frac{(1-\beta)_{n}(\gamma-\beta)_{n}\alpha^{n}}{(\alpha+1-\beta)_{n}n!z^{n}} \right|_{z \in \sigma_{\infty}(\alpha)}k_{1,\infty}^{\pm}(\alpha). \nonumber \end{align}

Combining this with (\ref{eq:combining}) and writing $\tilde{y}_{1,f}^{(\infty)}(z)$ as in (\ref{eq:yformal}), we have,
\begin{align} \lim_{\substack{\alpha \rightarrow \infty \\ \text{arg}(\alpha) = \pm \frac{\pi}{2}}} \left. \sum_{n=0}^{\infty}\frac{(1-\beta)_{n}(\gamma-\beta)_{n}\alpha^{n}}{(\alpha+1-\beta)_{n}n!z^{n}} \right|_{z \in \sigma_{\infty}(\alpha)}(-\alpha)^{\gamma-\beta} k_{1,\infty}^{\pm} \sim \sum_{n=0}^{\infty}\frac{(\gamma-\beta)_{n}(1-\beta)_{n}}{n!z^{n}},\nonumber \end{align}
as $z \rightarrow \infty$ for $z \in \mathscr{S}^{*}$. \\

We now define $w = z^{-1}$ so that $w \rightarrow 0 \Leftrightarrow z \rightarrow \infty$ and we can apply 
the following classical result  \cite{wasow}:
\begin{lemma}
 \label{lemma:wasow} Let $f(w)$ be holomorphic in an open sector $\sigma$ at $w=0$ and let $\sigma^{*}$ be a closed, proper sub-sector of $\sigma$. If,
\[f(w) \sim \sum_{n=0}^{\infty} a_{n}w^{n}, \quad \quad \text{as } w \rightarrow 0, \ w \in \sigma,\]
then:
\[a_{n} = \frac{1}{n!} \lim_{\substack{w \rightarrow 0 \\ w \in \sigma^{*}}} f^{(n)}(z),\]
where $f^{(n)}(w)$ denotes the $n^{\text{th}}$ derivative of $f(w)$,
\end{lemma}
\noindent to find, 
\begin{align} &\frac{(\gamma-\beta)_{n}(1-\beta)_{n}}{n!} = \nonumber \\
&\quad \frac{1}{n!} \lim_{\substack{w \rightarrow 0 \\ w^{-1} \in \mathscr{S}^{*}}} \frac{d^{n}}{dw^{n}} \lim_{\substack{\alpha \rightarrow \infty \\ \text{arg}(\alpha) = \pm \frac{\pi}{2}}} \left. \sum_{l=0}^{\infty}\frac{(1-\beta)_{l}(\gamma-\beta)_{l}\alpha^{l}w^{l}}{(\alpha+1-\beta)_{l}l!} \right|_{w^{-1} \in \sigma_{\infty}(\alpha)}(-\alpha)^{\gamma-\beta} k_{1,\infty}^{\pm}(\alpha). \nonumber \end{align}
We proceed to treat the limits on the right hand side with special care. We first note that, due to the uniformity of the limits (\ref{eq:sat2}) and (\ref{eq:sat4}), we may interchange the limit in $\alpha$ with the derivative and the limit in $w$ as follows,
\begin{align} &\frac{(\gamma-\beta)_{n}(1-\beta)_{n}}{n!} = \nonumber \\
&\quad \frac{1}{n!} \lim_{\alpha \rightarrow \infty} \lim_{\substack{w \rightarrow 0 \\ w^{-1} \in \mathscr{S}^{*}}} \frac{d^{n}}{dw^{n}} \left. \sum_{l=0}^{\infty}\frac{(1-\beta)_{l}(\gamma-\beta)_{l}\alpha^{l}w^{l}}{(\alpha+1-\beta)_{l}l!} \right|_{w^{-1} \in \sigma_{\infty}(\alpha)}(-\alpha)^{\gamma-\beta} k_{1,\infty}^{\pm}(\alpha). \nonumber \end{align}
The next step is to notice that the series inside the limits on the right hand side represents an analytic function (or at least its analytic extension to the sector $\sigma_{\infty}(\eps)$ does). We may therefore interchange the derivative and series as follows,
\begin{align} &\frac{(\gamma-\beta)_{n}(1-\beta)_{n}}{n!} = \nonumber \\
&\ \frac{1}{n!} \lim_{\substack{\alpha \rightarrow \infty \\ \text{arg}(\alpha) = \pm \frac{\pi}{2}}} \lim_{\substack{w \rightarrow 0 \\ w^{-1} \in \mathscr{S}^{*}}} \left. \sum_{l=0}^{\infty}\frac{d^{n}}{dw^{n}}\frac{(1-\beta)_{l}(\gamma-\beta)_{l}\alpha^{l}w^{l}}{(\alpha+1-\beta)_{l}l!} \right|_{w^{-1} \in \sigma_{\infty}(\alpha)}(-\alpha)^{\gamma-\beta} k_{1,\infty}^{\pm}(\alpha) = \nonumber \\
&\ \frac{1}{n!} \lim_{\substack{\alpha \rightarrow \infty \\ \text{arg}(\alpha) = \pm \frac{\pi}{2}}} \lim_{\substack{w \rightarrow 0 \\ w^{-1} \in \mathscr{S}^{*}}} \left. \sum_{l=0}^{\infty}\frac{(l+n)!}{l!} \frac{(1-\beta)_{l+n}(\gamma-\beta)_{l+n}\alpha^{l+n}w^{l}}{(\alpha+1-\beta)_{l+n}(l+n)!} \right|_{w^{-1} \in \sigma_{\infty}(\alpha)}(-\alpha)^{\gamma-\beta} k_{1,\infty}^{\pm}(\alpha). \nonumber \end{align}
Furthermore, due to the analyticity of the series on the right hand side, its limit as $w \rightarrow 0$ certainly exists and is simply equal to the first term of the series. We finally deduce, 
\begin{align} \frac{(\gamma-\beta)_{n}(1-\beta)_{n}}{n!} = \frac{1}{n!} \lim_{\substack{\alpha \rightarrow \infty \\ \text{arg}(\alpha) = \pm \frac{\pi}{2}}} n! \frac{(1-\beta)_{n}(\gamma-\beta)_{n}\alpha^{n}}{(\alpha+1-\beta)_{n}n!} (-\alpha)^{\gamma-\beta} k_{1,\infty}^{\pm}(\alpha). \label{eq:fff} 
\end{align}
Therefore 
\begin{align} &\lim_{\substack{\alpha \rightarrow \infty \\ \text{arg}(\alpha) = \pm \frac{\pi}{2}}} \frac{(1-\beta)_{n}(\gamma-\beta)_{n}\alpha^{n}}{(\alpha+1-\beta)_{n}n!} (-\alpha)^{\gamma-\beta} k_{1,\infty}^{\pm}(\alpha) \nonumber \\ 
&\quad = \frac{(1-\beta)_{n}(\gamma-\beta)_{n}}{n!} \lim_{\substack{\alpha \rightarrow \infty \\ \text{arg}(\alpha) = \pm \frac{\pi}{2}}} (-\alpha)^{\gamma-\beta} k_{1,\infty}^{\pm}(\alpha). \nonumber \end{align}
Comparing with the left hand side of (\ref{eq:fff}) we deduce the desired result (\ref{eq:n1}). The limit (\ref{eq:n2}) can be proved by using $y_{2}^{(\infty)}(z \alpha^{-1})$ as given by (\ref{eq:yinf}). The limits (\ref{eq:n3}) and (\ref{eq:n4}) can be proved using $y_{1}^{(1)}(z \alpha^{-1})$ and $y_{2}^{(1)}(z \alpha^{-1})$ as given by (\ref{eq:kummerrelation3}) and (\ref{eq:kummerrelation4}) and using Lemma \ref{lemma:obtaining2} in place of Lemma \ref{lemma:obtaining}.\end{proof}

\begin{remark} Returning to the point of view of studying the hypergeometric equations as the $(2 \times 2)$ equations (\ref{eq:hg1}) and (\ref{eq:chg1}), our Main Theorem \ref{main:importance} may be equivalently stated as follows. If $K_{1}^{\pm}(\alpha)$ and $K_{\infty}^{\pm}(\alpha)$ are diagonal matrices satisfying (\ref{eq:sat5})-(\ref{eq:sat8}), then they satisfy the following:
\begin{align} &\lim_{\substack{\alpha \rightarrow \infty \\ \text{arg}(\alpha) = \pm \frac{\pi}{2}}} K_{\infty}^{\pm}(\alpha) \left(\begin{array}{cc} (-\alpha)^{\gamma-\beta}  & 0 \\ 0 & -(-\alpha)^{\beta}\end{array}\right) = I, \label{eq:this1} \\
&\lim_{\substack{\alpha \rightarrow \infty \\ \text{arg}(\alpha) = \pm \frac{\pi}{2}}} K_{1}^{\pm}(\alpha) \left(\begin{array}{cc} \alpha^{\gamma-\beta}  & 0 \\ 0 & -\alpha^{\beta} \end{array}\right) = I. \label{eq:this2} \end{align} 
These limits can be proved in an analogous way to the limits in our Main Theorem \ref{main:importance} by using Remarks \ref{remark:r1} and \ref{remark:r2} in place of Lemmas \ref{lemma:obtaining} and \ref{lemma:obtaining2} respectively. \end{remark}

\subsubsection{Obtaining $\widetilde{Y}^{(0)}(z)$ from $Y^{(0)}(z)$}\ \\
Since the substitution $x=\frac{z}{\alpha}$ and limit $\alpha \rightarrow \infty$ do not interfere with the nature of the Fuchsian singularity $x=0$, corresponding to $z=0$, this limit is much easier. We will only consider the limit along $\arg(\alpha) = -\frac{\pi}{2}$, the other case is completely analogous even though it requires to change the branch cut in $\widetilde\Omega_0$.

\begin{lemma} \label{lemma:first} We have the following limit,
\begin{align} \lim_{\alpha \rightarrow \infty} \ _{2}F_{1}\left( \begin{array}{c} \alpha , \ \beta \\ \gamma \end{array} ; \frac{z}{\alpha} \right) = \ _{1}F_{1} \left(\begin{array}{c} \beta \\ \gamma \end{array} ; z \right). \nonumber \end{align} \end{lemma}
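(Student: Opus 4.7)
The plan is to expand both sides as power series and reduce the statement to a term-by-term interchange of limit and summation. Writing the left-hand side explicitly, one has
\[
{}_2F_1\!\left(\begin{array}{c} \alpha,\ \beta \\ \gamma \end{array};\frac{z}{\alpha}\right) = \sum_{n=0}^{\infty} \frac{(\beta)_n\, z^n}{(\gamma)_n\, n!}\cdot \frac{(\alpha)_n}{\alpha^n},
\]
so that the $n$-th term differs from the $n$-th term of ${}_1F_1(\beta;\gamma;z)$ only by the factor $(\alpha)_n/\alpha^n = \prod_{k=0}^{n-1}(1+k/\alpha)$. For each fixed $n$, this factor tends to $1$ as $\alpha\to\infty$, so pointwise term-by-term convergence is immediate.

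The real content of the lemma is to justify the interchange of the limit with the infinite sum. My plan here is to invoke Tannery's theorem (i.e., dominated convergence for series) with an explicit majorant. Fix $z\in\mathbb{C}$ and choose a real number $M>|z|$. For all $\alpha$ with $|\alpha|\geq M$ one has the elementary bound
\[
\left|\frac{(\alpha)_n}{\alpha^n}\right| = \prod_{k=0}^{n-1}\left|1+\frac{k}{\alpha}\right| \leq \prod_{k=0}^{n-1}\left(1+\frac{k}{M}\right) = \frac{(M)_n}{M^n},
\]
so the $n$-th term of the series is dominated in absolute value by
\[
\frac{|(\beta)_n|\,(M)_n\,|z|^n}{|(\gamma)_n|\,n!\,M^n},
\]
which is the $n$-th term of the (absolutely) convergent series ${}_2F_1(|\beta|^*,M;|\gamma|^*;|z|/M)$-type majorant, convergent because $|z|/M<1$. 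This majorant is independent of $\alpha$, so Tannery's theorem allows one to pass to the limit inside the sum and conclude
\[
\lim_{\alpha\to\infty}{}_2F_1\!\left(\begin{array}{c}\alpha,\ \beta \\ \gamma\end{array};\frac{z}{\alpha}\right) = \sum_{n=0}^{\infty}\frac{(\beta)_n}{(\gamma)_n\,n!}z^n = {}_1F_1\!\left(\begin{array}{c}\beta\\\gamma\end{array};z\right).
\]

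The only delicate point is producing a summable majorant uniform in $\alpha$; this is the step I expect to require the most care, since naively the Pochhammer ratio $(\alpha)_n/\alpha^n$ can grow with $n$. However, because one is free to take $|\alpha|$ as large as desired (indeed, $\alpha\to\infty$), one can always choose $M>|z|$ first and then restrict attention to $|\alpha|\geq M$, which keeps the comparison $(\alpha)_n/\alpha^n\leq (M)_n/M^n$ available and ensures the dominating series converges. No other subtleties arise, since the pointwise limit $(\alpha)_n/\alpha^n\to 1$ is trivial and the remaining factor is already the $n$-th term of the target ${}_1F_1$ series.
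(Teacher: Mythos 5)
Your proposal is correct and follows essentially the same route as the paper: both take the term-by-term limit of the $_{2}F_{1}$ series, using $(\alpha)_{n}/\alpha^{n}\to 1$ for each fixed $n$, to recover the $_{1}F_{1}$ series. The paper's proof is a one-line appeal to uniform convergence and uniqueness of Taylor expansions, whereas your invocation of Tannery's theorem with the explicit majorant $\bigl|(\alpha)_{n}/\alpha^{n}\bigr|\leq (M)_{n}/M^{n}$ for $|\alpha|\geq M>|z|$ (and the dominating series convergent by the ratio test since $|z|/M<1$) makes the interchange of limit and summation fully precise, filling in exactly the step the paper leaves implicit.
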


\begin{proof} By taking the term by term limit in the series for  ${}_{2}F_{1}$ we obtain a uniformly convergent series that coincides with ${} _{1}F_{1}$. We conclude by uniqueness of Taylor series expansion for analytic functions. \end{proof}

\begin{theorem} \label{theorem:let} 
Let $y_{k}^{(0)}(x)$ and $\tilde{y}^{(0)}_{k}(z)$, $k=1,2$, be defined as in (\ref{eq:y0}) and (\ref{eq:yt0}) respectively. For $\arg(\alpha) = - \frac{\pi}{2}$, we have the following limits,
\begin{align} 
\begin{split} 
&\lim_{\substack{\alpha \rightarrow \infty \\ z \in \omega_{0}(\alpha)}} y_{1}^{(0)}\left(z \alpha^{-1} \right) \alpha^{1-\gamma} =
 \tilde{y}^{(0)}_{1}(z), \\ &\lim_{\substack{\alpha \rightarrow \infty \\ z \in \omega_{0}(\alpha)}} y_{2}^{(0)}\left(z\alpha^{-1}\right) = \tilde{y}^{(0)}_{2}(z), \end{split} 
 \quad \quad z \in \widetilde{\Omega}_{0}. \label{eq:020} 
 \end{align}
 where 
\[\omega_{0}(\alpha) = \left\{ z : |z| < |\alpha| , \ -\frac{3}{2}\pi  \leq \text{arg}(z) < \frac{\pi}{2} \right\}.\]
 \end{theorem}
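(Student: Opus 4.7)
The plan is to reduce both limits to Lemma \ref{lemma:first} (and a parameter-shifted twin of it) via direct substitution into the explicit hypergeometric series representations \eqref{eq:y0} and \eqref{eq:yt0}. Since the substitution $x=z/\alpha$ is a mere rescaling near the Fuchsian singularity $x=0$, no Stokes-theoretic machinery is required here; the only subtlety is careful bookkeeping of branches.

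First I would treat $y_{2}^{(0)}$, which is immediate: by definition $y_{2}^{(0)}(z/\alpha)={}_{2}F_{1}(\alpha,\beta;\gamma;z/\alpha)$, so Lemma \ref{lemma:first} yields pointwise (and, by an $M$-test argument in the proof of that lemma, uniform on compacta) convergence to ${}_{1}F_{1}(\beta;\gamma;z)=\tilde{y}_{2}^{(0)}(z)$. Next, for $y_{1}^{(0)}$ I would write
\[
y_{1}^{(0)}(z/\alpha)\,\alpha^{1-\gamma}=(z/\alpha)^{1-\gamma}\,\alpha^{1-\gamma}\;{}_{2}F_{1}\!\left(\alpha+1-\gamma,\,\beta+1-\gamma;\,2-\gamma;\,z/\alpha\right),
\]
and establish that the same term-by-term argument as in Lemma \ref{lemma:first} applies with the shifted numerator parameter $\alpha+1-\gamma$ in place of $\alpha$ (the ratio $(\alpha+1-\gamma)_{n}/\alpha^{n}\to 1$ just as $(\alpha)_{n}/\alpha^{n}\to 1$), so that the ${}_{2}F_{1}$ factor tends to ${}_{1}F_{1}(\beta+1-\gamma;2-\gamma;z)$ uniformly on compacta in $z$.

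The heart of the argument is then verifying that the prefactor collapses cleanly: $(z/\alpha)^{1-\gamma}\alpha^{1-\gamma}=z^{1-\gamma}$ with the branches chosen in \eqref{eq:y0} and \eqref{eq:yt0}. This is exactly where the hypothesis $\arg(\alpha)=-\pi/2$ and the shape of $\omega_{0}(\alpha)=\{|z|<|\alpha|,\,-3\pi/2\le\arg(z)<\pi/2\}$ enter. For $z\in\omega_{0}(\alpha)$ we have $\arg(z/\alpha)=\arg(z)-\arg(\alpha)=\arg(z)+\pi/2\in[-\pi,\pi)$, which is precisely the branch range specified by $\Omega_{0}$ in Section \ref{sec:gg}; thus $(z/\alpha)^{1-\gamma}$ is taken with its $\Omega_{0}$-branch, while $\alpha^{1-\gamma}$ is defined by $\arg(\alpha)=-\pi/2$, and the logarithms add to give the $\tilde{\Omega}_{0}$-branch of $z^{1-\gamma}$ with no stray factor of $e^{2\pi i(1-\gamma)}$. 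Combining this with the limit of the ${}_{2}F_{1}$ factor yields $\tilde{y}_{1}^{(0)}(z)=z^{1-\gamma}\,{}_{1}F_{1}(\beta+1-\gamma;2-\gamma;z)$.

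Finally, to upgrade pointwise convergence on $\omega_{0}(\alpha)$ to the stated convergence on $\tilde{\Omega}_{0}$, I would note that $\omega_{0}(\alpha)$ exhausts $\tilde{\Omega}_{0}$ in the sense that every compact $K\subset\tilde{\Omega}_{0}$ satisfies $K\subset\omega_{0}(\alpha)$ once $|\alpha|$ is large enough; the uniform convergence of the ${}_{2}F_{1}$ series on compacta (supplied by the dominated-convergence/Tannery-style argument underlying Lemma \ref{lemma:first}) then gives the result on all of $\tilde{\Omega}_{0}$. The only real obstacle is the branch-tracking in the middle paragraph; everything else is direct substitution together with the cited lemma.
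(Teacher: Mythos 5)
Your proposal is correct and follows essentially the same route as the paper: substitute $x=z/\alpha$ into the explicit series \eqref{eq:y0}, apply Lemma \ref{lemma:first} (and its parameter-shifted analogue for $\alpha+1-\gamma$), and observe that for $\arg(\alpha)=-\frac{\pi}{2}$ the condition $z\in\omega_{0}(\alpha)$ is exactly $x\in\Omega_{0}$, so the prefactors combine as $(z/\alpha)^{1-\gamma}\alpha^{1-\gamma}=z^{1-\gamma}$ on the $\widetilde{\Omega}_{0}$ branch. Your explicit branch bookkeeping and compact-exhaustion remark only make precise what the paper leaves implicit (indeed the paper's proof misprints the ray as $\arg(\alpha)=\frac{\pi}{2}$), so there is nothing to add.
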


\begin{proof}  Notice that  for $\arg(\alpha) =\frac{\pi}{2}$, $x \in \Omega_{0} \Leftrightarrow z \in \omega_{0}(\alpha)$. Since the radius of this neighbourhood clearly becomes infinite as $\alpha \rightarrow \infty$, if $z \in \omega_{0}(\alpha)$ for all $|\alpha|$ sufficiently large, then  the domain $\omega_{0}$ tends to the domain $\widetilde{\Omega}_{0}$ (i.e.  the domain in our definition of the fundamental solutions of Kummer equation around $z=0$ as given in Section \ref{sec:kumsol}). 

Using Lemma \ref{lemma:first}, we compute the limits as follows,
\begin{align} \lim_{\alpha \rightarrow \infty} y_{1}^{(0)}\left(z \alpha^{-1} \right) \alpha^{1-\gamma} &= \lim_{\alpha \rightarrow \infty} z^{1-\gamma} \ _{2}F_{1} \left(\begin{array}{c} \alpha+1-\gamma , \ \beta+1-\gamma \\ 2-\gamma \end{array} ; \frac{z}{\alpha}\right) \nonumber \\
&= z^{1-\gamma} \ _{1}F_{1} \left(\begin{array}{c} \beta+1-\gamma \\ 2-\gamma \end{array} ; z \right) = \tilde{y}_{1}^{(0)}(z), \quad \quad z \in \widetilde{\Omega}_{0},\nonumber \\
\text{and } \lim_{\alpha \rightarrow \infty} y_{2}^{(0)}\left(z \alpha^{-1} \right)&= \lim_{\alpha \rightarrow \infty} \ _{2}F_{1}\left(\begin{array}{c} \alpha , \ \beta \\ \gamma \end{array} ; \frac{z}{\alpha} \right) \nonumber \\
&= \ _{1}F_{1} \left(\begin{array}{c} \beta \\ \gamma \end{array} ; z \right) = \tilde{y}_{2}^{(0)}(z), \quad \quad z \in \widetilde{\Omega}_{0}, \nonumber \end{align} 
as required. \end{proof}

\begin{remark} The factor $\alpha^{1-\gamma}$ in the first limit of Theorem \ref{theorem:let} is necessary because of the term,
\[x^{1-\gamma} \equiv z^{1-\gamma} \alpha^{\gamma-1},\]
in the solution $y_{1}^{(0)}(x)$, as given in (\ref{eq:y0}). \end{remark}

\begin{remark} We have stated Theorem \ref{theorem:let} in terms of the solutions of the \textit{scalar} hypergeometric equatons (\ref{eq:gauss}) and (\ref{eq:kummer}). The limits (\ref{eq:020}) can be equivalently stated in terms of the solutions of the $(2\times2)$ equations (\ref{eq:hg1}) and (\ref{eq:chg1}): for $\arg(\alpha) = \pm \frac{\pi}{2}$, 
\begin{align} &\lim_{\substack{\alpha \rightarrow \infty \\ z \in \omega_{0}(\alpha)}} Y^{(0)}\left(\frac{z}{\alpha}\right) \alpha^{\Theta_{0}} = \widetilde{Y}^{(0)}(z), &&z\in \widetilde{\Omega}_{0}. \label{eq:y020} \end{align} 
To see how this is equivalent to (\ref{eq:020}), we observe that for the diagonalising matrices we have
\begin{align} 
\lim_{\alpha \rightarrow \infty} R_{0} = \lim_{\alpha \rightarrow \infty} \left(\begin{array}{cc} 1 & 1 \\ \frac{\alpha+1-\gamma}{\alpha(\beta-\gamma)} & \frac{1}{\beta-1} \end{array} \right) = \left(\begin{array}{cc} 1 & 1 \\ \frac{1}{\beta-\gamma} & \frac{1}{\beta-1} \end{array} \right) = \widetilde{R}_{0},\nonumber 
\end{align}
and for the series, using Lemma \ref{lemma:first}, 
\begin{align} 
&\lim_{\alpha \rightarrow \infty} G_{0}\left(z \alpha^{-1} \right) = \lim_{\alpha \rightarrow \infty} \left( \begin{matrix*}[l] \ _{2}F_{1} \left( \begin{array}{c} \alpha+1-\gamma,\ \beta-\gamma \\ 1-\gamma \end{array} ; \frac{z}{\alpha} \right) \text{\LARGE ,} \\ \frac{z(\alpha+1-\gamma)(1-\beta)}{\alpha(1-\gamma)(2-\gamma)} \ _{2}F_{1} \left(\begin{array}{c} \alpha+2-\gamma, \  \beta+1-\gamma \\ 3-\gamma \end{array} ; \frac{z}{\alpha}  \right) \text{\LARGE ,} \end{matrix*} \right. \nonumber \\
&\quad \quad \quad \quad \quad \quad \quad \quad \quad \quad \quad \quad \quad \quad \quad \quad \quad \quad \quad \quad \quad \left. \begin{matrix*}[r] \frac{z(\gamma-\beta)}{\gamma(\gamma-1)} \ \ _{2}F_{1} \left(\begin{array}{c} \alpha+1, \ \beta \\ \gamma+1 \end{array} ; \frac{z}{\alpha} \right) \\ \ _{2}F_{1} \left(\begin{array}{c}\alpha , \ \beta-1 \\ \gamma - 1 \end{array} ; \frac{z}{\alpha} \right) \end{matrix*} \right), \nonumber \\
&\quad = \left(\begin{array}{cc} _{1}F_{1} \left( \begin{array}{c} \beta-\gamma \\ 1-\gamma \end{array} ;z\right) \text{\LARGE ,} & \frac{z (\gamma-\beta)}{\gamma(\gamma-1)} \ \ _{1}F_{1} \left(\begin{array}{c} \beta \\ \gamma+1 \end{array} ; z\right) \\ & \\ \frac{z(1-\beta)}{(1-\gamma)(2-\gamma)} \ _{1}F_{1} \left(\begin{array}{c} \beta+1-\gamma \\ 3-\gamma \end{array} ; z \right) \text{\LARGE ,} & _{1}F_{1} \left(\begin{array}{c}\beta-1 \\ \gamma - 1 \end{array} ; z \right) \end{array} \right) = H_{0}(z). 
\nonumber \end{align} 
\end{remark}

\subsubsection{Limits of monodromy data} \label{sec:332}

Summarising the results so far, in section \ref{sec:331} we showed how \textit{term-by-term} limits of the solutions of Gauss equation around $x=\infty$ and $x=1$ produce the formal solutions of Kummer equation aroud $z=\infty$. We then explained how Glutsyuk's Theorem \ref{theorem:gl} asserts the existence of certain scalars which multiply Gauss solutions so that their true limits exist and are equal to the solutions of Kummer equation analytic in sectors at $z=\infty$. We have also proved our Main Theorem \ref{main:importance}, which establishes some important limits which these factors must satisfy. We now bring these results together to prove our second main theorem, concerned with explicitly producing the set of monodromy data $\widetilde{\mathcal{M}}$ from the set $\mathcal{M}$.
\begin{theorem} \label{main:top} 
Define the monodromy data of Gauss equation as given in (\ref{eq:ci0})-(\ref{eq:hgmono}) and of Kummer equation as in (\ref{eq:kummers-1})-(\ref{eq:chgmono}). We have the following limits, 
\begin{align} &\lim_{\substack{\alpha \rightarrow \infty \\ \text{arg}(\alpha) = \frac{\pi}{2}}} \left(\begin{array}{cc} \alpha^{\gamma-\beta} & 0 \\ 0 & -\alpha^{\beta}\end{array}\right) C^{1 \infty} \left(\begin{array}{cc} (-\alpha)^{\beta-\gamma} & 0 \\ 0 & -(-\alpha)^{-\beta}\end{array}\right) = \widetilde{S}_{0}, \label{eq:firstlimit} \\
&\lim_{\substack{\alpha \rightarrow \infty \\ \text{arg}(\alpha) = -\frac{\pi}{2}}} \left(\begin{array}{cc} \alpha^{\gamma-\beta} & 0 \\ 0 & -\alpha^{\beta}\end{array}\right) C^{1 \infty} \left(\begin{array}{cc} (-\alpha)^{\beta-\gamma} & 0 \\ 0 & -(-\alpha)^{-\beta}\end{array}\right) = \widetilde{S}_{-1}, \label{eq:secondlimit} \\
&\lim_{\substack{\alpha \rightarrow \infty \\ \text{arg}(\alpha) = -\frac{\pi}{2}}} \left(\begin{array}{cc} \alpha^{\gamma-1} & 0 \\ 0 & 1\end{array}\right) C^{0 \infty} \left(\begin{array}{cc} (-\alpha)^{\beta-\gamma} & 0 \\ 0 & -(-\alpha)^{-\beta}\end{array}\right) = \widetilde{C}^{0 \infty} \label{eq:thirdlimit}
 \end{align} 
Furthermore, as immediate consequences of the above limits of connection matrices, we have the following limits of monodromy matrices,
\begin{align} &\lim_{\substack{\alpha \rightarrow \infty \\ \text{arg}(\alpha) = -\frac{\pi}{2}}} \left(\begin{array}{cc} (-\alpha)^{\gamma-\beta} & 0 \\ 0 & -(-\alpha)^{\beta} \end{array}\right) M_{0} \left(\begin{array}{cc} (-\alpha)^{\beta-\gamma} & 0 \\ 0 & -(-\alpha)^{-\beta}\end{array}\right) = \widetilde{M}_{0}, \label{eq:fifthlimit} \\
&\lim_{\substack{\alpha \rightarrow \infty \\ \text{arg}(\alpha) = -\frac{\pi}{2}}} \left(\begin{array}{cc} (-\alpha)^{\gamma-\beta} & 0 \\ 0 & -(-\alpha)^{\beta} \end{array}\right) M_{\infty}M_{1} \left(\begin{array}{cc} (-\alpha)^{\beta-\gamma} & 0 \\ 0 & -(-\alpha)^{-\beta}\end{array}\right) = \widetilde{M}_{\infty}, \label{eq:seventhlimit} 
\end{align} \end{theorem}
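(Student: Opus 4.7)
The plan is to derive (\ref{eq:firstlimit})--(\ref{eq:thirdlimit}) as direct consequences of Glutsyuk's Corollary \ref{corollary:glcorollary} in the concrete form (\ref{eq:sat5})--(\ref{eq:sat8}) together with Theorem \ref{theorem:let}, and then to substitute the explicit asymptotics of the diagonal normalisers $K_1^\pm(\alpha)$, $K_\infty^\pm(\alpha)$ provided by the matrix form (\ref{eq:this1})--(\ref{eq:this2}) of Main Theorem \ref{main:importance}. The monodromy limits (\ref{eq:fifthlimit})--(\ref{eq:seventhlimit}) will then follow by purely algebraic manipulation from (\ref{eq:thirdlimit}) together with the cyclic relations (\ref{eq:cyc1}) and (\ref{eq:cycrel}).

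For (\ref{eq:firstlimit}) and (\ref{eq:secondlimit}) I would start from the Gauss connection relation $Y^{(\infty)}(x) = Y^{(1)}(x)\,C^{1\infty}$, substitute $x=z/\alpha$, and insert normalisers to rewrite it as
\[
Y^{(\infty)}\!\bigl(\tfrac{z}{\alpha}\bigr) K_\infty^\pm(\alpha) = \bigl[Y^{(1)}\!\bigl(\tfrac{z}{\alpha}\bigr) K_1^\pm(\alpha)\bigr]\cdot\bigl[K_1^\pm(\alpha)^{-1}\,C^{1\infty}\,K_\infty^\pm(\alpha)\bigr].
\]
For $\arg(\alpha)=+\pi/2$, equations (\ref{eq:sat7})--(\ref{eq:sat8}) together with Definition \ref{definition:kumst} force the bracketed factor to tend to the unique matrix relating $\widetilde{Y}^{(\infty,0)}$ to $\widetilde{Y}^{(\infty,1)}$, namely $\widetilde{S}_0$; analogously, for $\arg(\alpha)=-\pi/2$, equations (\ref{eq:sat5})--(\ref{eq:sat6}) force it to tend to $\widetilde{S}_{-1}$. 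Substituting $K_\infty^\pm \sim \mathrm{diag}\bigl((-\alpha)^{\beta-\gamma},\,-(-\alpha)^{-\beta}\bigr)$ and $(K_1^\pm)^{-1} \sim \mathrm{diag}\bigl(\alpha^{\gamma-\beta},\,-\alpha^{\beta}\bigr)$, which are exactly (\ref{eq:this1})--(\ref{eq:this2}) rearranged, yields precisely the right-hand sides of (\ref{eq:firstlimit})--(\ref{eq:secondlimit}).

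For (\ref{eq:thirdlimit}) the same strategy applies to $Y^{(\infty)}(x) = Y^{(0)}(x)\,C^{0\infty}$, with $K_1^\pm(\alpha)$ replaced by $\alpha^{\Theta_0} = \mathrm{diag}(\alpha^{1-\gamma},1)$ supplied by Theorem \ref{theorem:let} (equivalently (\ref{eq:y020})), and $\arg(\alpha)=-\pi/2$. The intermediate matrix $\alpha^{-\Theta_0}\,C^{0\infty}\,K_\infty^-(\alpha)$ must then converge to the connection matrix relating $\widetilde{Y}^{(0)}$ to $\widetilde{Y}^{(\infty,0)}$, that is $\widetilde{C}^{0\infty}$; substituting $\alpha^{-\Theta_0} = \mathrm{diag}(\alpha^{\gamma-1},1)$ and the asymptotics of $K_\infty^-$ produces (\ref{eq:thirdlimit}).

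For the monodromy limits, set $D_\infty(\alpha) = \mathrm{diag}\bigl((-\alpha)^{\beta-\gamma},\,-(-\alpha)^{-\beta}\bigr)$. Since $\alpha^{\Theta_0}$ is diagonal it commutes with $e^{2\pi i \Theta_0}$, so
\[
D_\infty^{-1}\, M_0\, D_\infty \;=\; \bigl(\alpha^{-\Theta_0} C^{0\infty} D_\infty\bigr)^{-1}\, e^{2\pi i \Theta_0}\,\bigl(\alpha^{-\Theta_0} C^{0\infty} D_\infty\bigr),
\]
which by (\ref{eq:thirdlimit}) and the identity $\Theta_0 = \widetilde{\Theta}_0$ converges to $(\widetilde{C}^{0\infty})^{-1} e^{2\pi i \widetilde{\Theta}_0}\widetilde{C}^{0\infty} = \widetilde{M}_0$, proving (\ref{eq:fifthlimit}). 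Finally, (\ref{eq:seventhlimit}) follows from inverting (\ref{eq:fifthlimit}) and using $M_\infty M_1 = M_0^{-1}$ from (\ref{eq:cyc1}) together with $\widetilde{M}_\infty = \widetilde{M}_0^{-1}$ from (\ref{eq:cycrel}). The only obstacle that needs care is bookkeeping: correctly matching the limit sectors $\widetilde{\mathscr{S}}_k$ arising as $\alpha\to\infty\,e^{\pm i\pi/2}$ with the sector indices in Glutsyuk's theorem, thereby identifying which of $\widetilde{S}_0,\widetilde{S}_{-1}$ appears in each direction. Once this matching is fixed, everything else is routine algebra.
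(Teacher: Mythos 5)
Your proposal is correct and follows essentially the same route as the paper: insert the Glutsyuk normalisers into the Gauss connection relations, identify the limit of $\bigl(K_{1}^{\pm}\bigr)^{-1}C^{1\infty}K_{\infty}^{\pm}$ (resp. $\alpha^{-\Theta_{0}}C^{0\infty}K_{\infty}^{-}$) with $\widetilde{S}_{0}$, $\widetilde{S}_{-1}$ (resp. $\widetilde{C}^{0\infty}$) via (\ref{eq:sat5})--(\ref{eq:sat8}) and (\ref{eq:y020}), replace the normalisers by their explicit asymptotics from (\ref{eq:this1})--(\ref{eq:this2}), and deduce the monodromy limits by conjugation and the cyclic relations. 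The only step you leave implicit is the elementary fact (the paper's Lemma \ref{lemma:elementary}) that invertibility of the limiting fundamental solutions lets you conclude existence of the limit of the bracketed connection factor and to swap the $K$'s for their asymptotic equivalents inside the limit.
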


\begin{proof} As part of the proof of this theorem, we will use the following elementary lemma.

\begin{lemma} \label{lemma:elementary} Let $f(\alpha)$ and $g(\alpha)$ be matrices such that $\lim_{\alpha \rightarrow \infty} f(\alpha)g(\alpha)$ exists. \\
\textbf{i)} If $\lim_{\alpha \rightarrow \infty}$ det$(f(\alpha))$ exists and is non-zero and det$(f(\alpha)) \neq 0$ for all $\alpha$ sufficiently large and if the limit $\lim_{\alpha \rightarrow \infty} f(\alpha)$ exists and is invertible, then the limit $\lim_{\alpha \rightarrow \infty} g(\alpha)$ exists. \\
\textbf{ii)} If $\lim_{\alpha \rightarrow \infty}$ det$(g(\alpha))$ exists and is non-zero and det$(g(\alpha)) \neq 0$ for all $\alpha$ sufficiently large and if the limit $\lim_{\alpha \rightarrow \infty} g(\alpha)$ exists, then the limit $\lim_{\alpha \rightarrow \infty} f(\alpha)$ exists. \end{lemma}

Let $\sigma_{\alpha}(\alpha)$ and $\sigma_{\infty}(\alpha)$ be the sectors defined in (\ref{eq:sectors1}) and (\ref{eq:sectors2}) respectively. As mentioned previously, if $z \in \sigma_{\alpha}(\alpha)$ then $x \in \Omega_{1}$ and if $z \in \sigma_{\infty}(\alpha)$ then $x \in \Omega_{\infty}$, so that the connection matrix $C^{1 \infty}$ remains valid for the solutions $Y^{(1)}(z \alpha^{-1})$ and $Y^{(\infty)}(z \alpha^{-1})$ restricted to the sectors $\sigma_{\alpha}(\alpha)$ and $\sigma_{\infty}(\alpha)$ respectively. Since the radii of these sectors do not diminish as $\alpha \rightarrow \infty$, for $|\alpha|$ sufficiently large we must have,
\[\sigma_{\alpha}(\alpha) \cap \sigma_{\infty}(\alpha) \neq \varnothing,\]
recall Figure \ref{fig:sss}. Therefore, for $|\alpha|$ sufficiently large, we have, 
\begin{align} Y^{(\infty)}\left(z \alpha^{-1} \right) = Y^{(1)}\left(z \alpha^{-1} \right) C^{1 \infty}, \quad \quad z \in \sigma_{\alpha}(\alpha) \cap \sigma_{\infty}(\alpha). \label{eq:bec} \end{align}
Let $\widetilde{\mathscr{S}}_{k}$ be the sectors defined in (\ref{eq:sectors3}). To prove the first limit (\ref{eq:firstlimit}), we first give a proof of Glutsyuk's Corollary \ref{corollary:glcorollary} in our case. We multiply by the matrices $K_{\infty}^{+}(\alpha)$ and $K_{1}^{+}(\alpha)$ and take the limit $\alpha \rightarrow \infty$, with $\arg(\alpha)=\frac{\pi}{2}$, so that (\ref{eq:bec}) becomes, 
\begin{align} &\lim_{\substack{\alpha \rightarrow \infty \\ \text{arg}(\alpha)=\frac{\pi}{2}}} \left. Y^{(\infty)}(z \alpha^{-1})\right|_{z \in \sigma_{\infty}(\alpha)}K_{\infty}^{+}(\alpha) \nonumber \\
&\quad = \lim_{\substack{\alpha \rightarrow \infty \\ \text{arg}(\alpha)=\frac{\pi}{2}}} \left. Y^{(1)}(z \alpha^{-1}) \right|_{z \in \sigma_{\alpha}(\alpha)}K_{1}^{+}(\alpha) \left(K_{1}^{+}(\alpha)\right)^{-1}C^{1\infty}K_{\infty}^{+}(\alpha), \label{eq:bec2} \end{align}
for $z \in \widetilde{\mathscr{S}}_{0}\cap \widetilde{\mathscr{S}}_{1}$. We apply Lemma \ref{lemma:elementary} \textbf{i)} with, 
\[f(\alpha) = \left. Y^{(1)}(z \alpha^{-1})\right|_{z \in \sigma_{\alpha}(\alpha)}K_{1}^{+}(\alpha) \quad \text{and} \quad g(\alpha)=\left(K_{1}^{+}(\alpha)\right)^{-1}C^{1\infty}K_{\infty}^{+}(\alpha).\]
Observe that the hypotheses of Lemma \ref{lemma:elementary} hold: the limit, 
\[\lim_{\substack{\alpha \rightarrow \infty \\ \text{arg}(\alpha) = \frac{\pi}{2}}} f(\alpha)g(\alpha),\]
exists and equals $\widetilde{Y}^{(\infty,1)}(z)$, by (\ref{eq:sat8}), and the limit, 
\[\lim_{\substack{\alpha \rightarrow \infty \\ \text{arg}(\alpha)=\frac{\pi}{2}}} f(\alpha),\]
exists and equals $\widetilde{Y}^{(\infty,0)}(z)$, by (\ref{eq:sat7}), which is clearly invertible because it is a fundamental solution. For all $\alpha$, $f(\alpha)$ is also clearly invertible because it is a fundamental solution. The limit,
\[\lim_{\substack{\alpha \rightarrow \infty \\ \text{arg}(\alpha)=\frac{\pi}{2}}} g(\alpha) = \lim_{\substack{\alpha \rightarrow \infty \\ \text{arg}(\alpha)=\frac{\pi}{2}}} \left(K_{1}^{+}(\alpha)\right)^{-1}C^{1\infty}K_{\infty}^{+}(\alpha),\]
therefore exists and, from (\ref{eq:bec2}), 
\begin{align} \widetilde{Y}^{(\infty,1)}(z) = \widetilde{Y}^{(\infty,0)}(z) \lim_{\substack{\alpha \rightarrow \infty \\ \text{arg}(\alpha)=\frac{\pi}{2}}} \left(K_{1}^{+}(\alpha)\right)^{-1}C^{1 \infty}K_{\infty}^{+}(\alpha), \quad \quad z \in \widetilde{\mathscr{S}_{0}} \cap \widetilde{\mathscr{S}}_{1}.\nonumber \end{align}
Recall that if $z \in \widetilde{\mathscr{S}}_{k}$ then $z \in \widetilde{\Sigma}_{k}$ and recall Definition \ref{definition:kumst} of Stokes matrices, namely we have, 
\[\widetilde{Y}^{(\infty,1)}(z) = \widetilde{Y}^{(\infty,0)}(z) \widetilde{S}_{0}, \quad \quad z \in \widetilde{\Sigma}_{0}\cap\widetilde{\Sigma}_{1}.\]
We conclude that,
\[\lim_{\substack{\alpha \rightarrow \infty \\ \text{arg}(\alpha)=\frac{\pi}{2}}} \left(K_{1}^{+}(\alpha)\right)^{-1}C^{1\infty}K_{\infty}^{+}(\alpha) = \widetilde{S}_{0},\]
which is precisely Glutsyuk's Corollary \ref{corollary:glcorollary} in our case. Combining this with (\ref{eq:this1}) and (\ref{eq:this2}), we compute,
\begin{align} \widetilde{S}_{0} &= \lim_{\substack{\alpha \rightarrow \infty \\ \text{arg}(\alpha)=\frac{\pi}{2}}} \left(K_{1}^{+}(\alpha)\right)^{-1} C^{1 \infty} K_{\infty}^{+}(\alpha), \nonumber \\
&= \lim_{\substack{\alpha \rightarrow \infty \\ \text{arg}(\alpha)=\frac{\pi}{2}}} \left(K_{1}^{+}(\alpha) \left(\begin{array}{cc} \alpha^{\gamma-\beta} & 0 \\ 0 & -\alpha^{\beta} \end{array} \right)\left(\begin{array}{cc} \alpha^{\beta-\gamma} & 0 \\ 0 & -\alpha^{-\beta} \end{array} \right) \right)^{-1} \nonumber \\
&\quad \quad \quad \quad \quad \quad \quad \quad C^{1 \infty} K_{\infty}^{+}(\alpha) \left(\begin{array}{cc} (-\alpha)^{\gamma-\beta} & 0 \\ 0 & -(-\alpha)^{\beta} \end{array} \right)\left(\begin{array}{cc} (-\alpha)^{\beta-\gamma} & 0 \\ 0 & -(-\alpha)^{-\beta} \end{array}  \right), \nonumber \\
&= \lim_{\substack{\alpha \rightarrow \infty \\ \text{arg}(\alpha)=\frac{\pi}{2}}}\left(\begin{array}{cc} \alpha^{\gamma-\beta} & 0 \\ 0 & -\alpha^{\beta} \end{array} \right) C^{1 \infty} \left(\begin{array}{cc} (-\alpha)^{\beta-\gamma} & 0 \\ 0 & -(-\alpha)^{-\beta} \end{array} \right), \nonumber \end{align}
where we have implicitly used Lemma \ref{lemma:elementary} again, this proves the first limit (\ref{eq:firstlimit}) of the theorem. To prove the second limit (\ref{eq:secondlimit}), we multiply by the matrices $K_{\infty}^{-}(\alpha)$ and $K_{1}^{-}(\alpha)$ and take the limit $\alpha \rightarrow \infty$, with $\arg(\alpha)=-\frac{\pi}{2}$, so that (\ref{eq:bec}) becomes, 
\begin{align} &\lim_{\substack{\alpha \rightarrow \infty \\ \text{arg}(\alpha)=-\frac{\pi}{2}}} \left. Y^{(\infty)}(z \alpha^{-1})\right|_{z \in \sigma_{\infty}(\alpha)}K_{\infty}^{-}(\alpha) \nonumber \\
&\quad = \lim_{\substack{\alpha \rightarrow \infty \\ \text{arg}(\alpha)=-\frac{\pi}{2}}} \left. Y^{(1)}(z \alpha^{-1}) \right|_{z \in \sigma_{\alpha}(\alpha)}K_{1}^{-}(\alpha) \left(K_{1}^{-}(\alpha)\right)^{-1}C^{1\infty}K_{\infty}^{-}(\alpha), \label{eq:bec3} \end{align}
for $z \in \widetilde{\mathscr{S}}_{-1}\cap\widetilde{\mathscr{S}}_{0}$. By following a similar procedure as above, using Lemma \ref{lemma:elementary} and the relations (\ref{eq:sat5}) and (\ref{eq:sat6}), we deduce,
\[\lim_{\substack{\alpha \rightarrow \infty \\ \text{arg}(\alpha)=-\frac{\pi}{2}}} \left(K_{1}^{-}(\alpha)\right)^{-1}C^{1\infty}K_{\infty}^{-}(\alpha) = \widetilde{S}_{-1}.\]
Combining this with (\ref{eq:this1}) and (\ref{eq:this2}), we compute,
\begin{align} \widetilde{S}_{-1} &= \lim_{\substack{\alpha \rightarrow \infty \\ \text{arg}(\alpha)=-\frac{\pi}{2}}} \left(K_{1}^{-}(\alpha)\right)^{-1} C^{1 \infty} K_{\infty}^{-}(\alpha), \nonumber \\
&= \lim_{\substack{\alpha \rightarrow \infty \\ \text{arg}(\alpha)=-\frac{\pi}{2}}} \left(K_{1}^{-}(\alpha) \left(\begin{array}{cc} \alpha^{\gamma-\beta} & 0 \\ 0 & -\alpha^{\beta} \end{array} \right)\left(\begin{array}{cc} \alpha^{\beta-\gamma} & 0 \\ 0 & -\alpha^{-\beta} \end{array} \right) \right)^{-1} \nonumber \\
&\quad \quad \quad \quad \quad \quad \quad \quad C^{1 \infty} K_{\infty}^{-}(\alpha) \left(\begin{array}{cc} (-\alpha)^{\gamma-\beta} & 0 \\ 0 & -(-\alpha)^{\beta} \end{array} \right)\left(\begin{array}{cc} (-\alpha)^{\beta-\gamma} & 0 \\ 0 & -(-\alpha)^{-\beta} \end{array} \right), \nonumber \\
&= \lim_{\substack{\alpha \rightarrow \infty \\ \text{arg}(\alpha)=-\frac{\pi}{2}}}\left(\begin{array}{cc} \alpha^{\gamma-\beta} & 0 \\ 0 & -\alpha^{\beta} \end{array} \right) C^{1 \infty} \left(\begin{array}{cc} (-\alpha)^{\beta-\gamma} & 0 \\ 0 & -(-\alpha)^{-\beta} \end{array} \right), \nonumber \end{align}
where we have implicitly used Lemma \ref{lemma:elementary}, this proves the second limit (\ref{eq:secondlimit}) of the theorem. \\

To prove the third limit (\ref{eq:thirdlimit}) we first note that the curve $\gamma_{\infty 0}$ which defines the connection matrix $C^{0 \infty}$ survives the confluence limit. In other words, after the substitution $x=\frac{z}{\alpha}$, the curve does not diminish or become broken under the limit $\alpha \rightarrow \infty$. This fact is expressed as follows,
\begin{align} &\lim_{\substack{\alpha \rightarrow \infty \\ \text{arg}(\alpha) = -\frac{\pi}{2}}} \gamma_{\infty 0} \left[Y^{(\infty)}K_{\infty}^{-}(\alpha)\right]\left(z \alpha^{-1}\right) = \gamma_{\infty 0} \left[\widetilde{Y}^{(\infty,0)}\right](z), \nonumber \end{align}
or equivalently, using the domains $\omega_{0}^{-}(\alpha)$ and $\widetilde{\Omega}_{0}^{-}$ defined in Sections \ref{sec:331} and \ref{sec:kumsol} respectively,
\begin{align} &\lim_{\substack{\alpha \rightarrow \infty \\ \text{arg}(\alpha) = -\frac{\pi}{2}}} \left. Y^{(0)}\left(z \alpha^{-1}\right) \right|_{z \in \omega_{0}^{-}(\alpha)} C^{0 \infty} \left(C^{1\infty}\right)^{-1} K_{\infty}^{-}(\alpha) = \widetilde{Y}^{(0)}(z) \widetilde{C}^{0 \infty}, \quad \quad z \in \widetilde{\Omega}_{0}^{-}.\nonumber \end{align}

Combining this with the limits (\ref{eq:y020}) and (\ref{eq:this1}), we deduce the required result (\ref{eq:thirdlimit}) as follows, for $z \in \widetilde{\Omega}_{0}^{-}$:
\begin{align} &\lim_{\substack{\alpha \rightarrow \infty \\ \text{arg}(\alpha)=-\frac{\pi}{2}}} \left. Y^{(0)}\left(z \alpha^{-1}\right) \right|_{z \in \omega_{0}^{-}(\alpha)} \left(\begin{array}{cc} \alpha^{1-\gamma} & 0 \\ 0 & 1 \end{array} \right) \left(\begin{array}{cc} \alpha^{\gamma-1} & 0 \\ 0 & 1 \end{array}\right) C^{0 \infty} \nonumber \\
&\quad \hspace{85pt} K_{\infty}^{-}(\alpha) \left(\begin{array}{cc} (-\alpha)^{\gamma-\beta} & 0 \\ 0 & -(-\alpha)^{\beta} \end{array} \right)\left(\begin{array}{cc} (-\alpha)^{\beta-\gamma} & 0 \\ 0 & -(-\alpha)^{-\beta} \end{array}  \right) \nonumber \\
&\quad \quad = \widetilde{Y}^{(0)}(z) \lim_{\substack{\alpha \rightarrow \infty \\ \text{arg}(\alpha)=
-\frac{\pi}{2}}} \left(\begin{array}{cc}\alpha^{\gamma-1} & 0 \\ 0 & 1 \end{array} \right) C^{0 \infty}\left(\begin{array}{cc} (-\alpha)^{\beta-\gamma} & 0 \\ 0 & -(-\alpha)^{-\beta} \end{array}  \right) = \widetilde{Y}^{(0)}(z) \widetilde{C}^{0 \infty}, \nonumber \\ 
&\hspace{40pt} \Leftrightarrow \quad \lim_{\substack{\alpha \rightarrow \infty \\ \text{arg}(\alpha)=-\frac{\pi}{2}}} \left(\begin{array}{cc}\alpha^{\gamma-1} & 0 \\ 0 & 1 \end{array} \right) C^{0 \infty}\left(\begin{array}{cc} (-\alpha)^{\beta-\gamma} & 0 \\ 0 & -(-\alpha)^{-\beta} \end{array}  \right) = \widetilde{C}^{0\infty}, \nonumber \end{align}
where we have implicitly used Lemma \ref{lemma:elementary}. \\

Having deduced the limit (\ref{eq:thirdlimit}) of the connection matrix, the limit (\ref{eq:fifthlimit}) follow directly since $M_{0} = \left(C^{0\infty}\right)^{-1} e^{2 \pi i \Theta_{0}}C^{0\infty}$ and $\Theta_{0} \equiv \widetilde{\Theta}_{0}$. For (\ref{eq:fifthlimit}), we have,
\begin{align} &\lim_{\substack{\alpha \rightarrow \infty \\ \text{arg}(\alpha) = -\frac{\pi}{2}}} \left(\begin{array}{cc}(-\alpha)^{\gamma-\beta} & 0 \\ 0 & -(-\alpha)^{\beta} \end{array} \right) M_{0} \left(\begin{array}{cc}(-\alpha)^{\beta-\gamma} & 0 \\ 0 & -(-\alpha)^{-\beta}\end{array}\right) \nonumber \\
&\quad \quad = \lim_{\substack{\alpha \rightarrow \infty \\ \text{arg}(\alpha) = -\frac{\pi}{2}}} \left(\begin{array}{cc}(-\alpha)^{\gamma-\beta} & 0 \\ 0 & -(-\alpha)^{\beta} \end{array} \right) \left(C^{0\infty}\right)^{-1} e^{2 \pi i \Theta_{0}}C^{0\infty} \left(\begin{array}{cc}(-\alpha)^{\beta-\gamma} & 0 \\ 0 & -(-\alpha)^{-\beta}\end{array}\right), \nonumber \\
&\quad \quad = \lim_{\substack{\alpha \rightarrow \infty \\ \text{arg}(\alpha) = -\frac{\pi}{2}}} \left(\begin{array}{cc}(-\alpha)^{\gamma-\beta} & 0 \\ 0 & -(-\alpha)^{\beta} \end{array} \right) \left(C^{0 \infty}\right)^{-1} \left(\begin{array}{cc}\alpha^{\gamma-1} & 0 \\ 0 & 1 \end{array}\right) e^{2 \pi i \Theta_{0}} \nonumber \\
&\quad \hspace{60pt} \left(\begin{array}{cc} \alpha^{1-\gamma} & 0 \\ 0 & 1 \end{array} \right) C^{0\infty} \left(\begin{array}{cc}(-\alpha)^{\beta-\gamma} & 0 \\ 0 & -(-\alpha)^{-\beta}\end{array}\right), \nonumber \\
&\quad \quad = \left(\widetilde{C}^{0\infty}\right)^{-1} e^{2 \pi i \widetilde{\Theta}_{0}} \widetilde{C}^{0 \infty} = \widetilde{M}_{0}, \nonumber \end{align}
as required. \end{proof}

\subsubsection{Explicit computations of limits of monodromy data} 
Here we apply Theorem \ref{main:top} to calculate explicitly  the Stokes' matrices. We will use the following classical facts: 
\begin{align} &\lim_{\alpha \rightarrow \infty} a^{c-b} \frac{\Gamma(a+b)}{\Gamma(a+c)} = 1, \text{ as $a \rightarrow \infty$, $|\text{arg}(a)|<\pi$},\label{eq:f1} \\
&\Gamma(a) \equiv \frac{\pi}{\sin(\pi a)\Gamma(1-a)}, \label{eq:f2} \\
&\lim_{a \rightarrow \infty} e^{i \pi a}\csc(\pi a) = 2i \text{ for Im}(a)<0. \label{eq:f3} \end{align}
The proof of (\ref{eq:f3}) is elementary, the proofs of (\ref{eq:f1}) and (\ref{eq:f2}) can be found in \cite{ww} and \cite{bateman}.\\ 

Let $C^{1 \infty}$ be given by (\ref{eq:ci1}). Using $(-\alpha) \equiv \alpha e^{i \pi}$, we calculate, 
\begin{align} &\left(\begin{array}{cc} \alpha^{\gamma-\beta} & 0 \\ 0 & -\alpha^{-\beta}\end{array}\right) C^{1 \infty} \left(\begin{array}{cc} (-\alpha)^{\beta-\gamma} & 0 \\ 0 & -(-\alpha)^{\beta}\end{array}\right) \nonumber \\
&\quad = \left(\begin{array}{cc} \alpha^{\gamma-\beta} & 0 \\ 0 & -\alpha^{-\beta}\end{array}\right) \nonumber \\
&\quad \quad \left(\begin{array}{cc} e^{i \pi (\gamma-\beta)} \frac{\Gamma(\alpha+1-\beta)\Gamma(\alpha+\beta-\gamma)}{\Gamma(\alpha)\Gamma(\alpha+1-\gamma)} & e^{i \pi (\gamma-\alpha)} \frac{\Gamma(\beta+1-\alpha) \Gamma(\alpha+\beta-\gamma)}{\Gamma(\beta)\Gamma(\beta+1-\gamma)} \\ e^{i \pi \alpha} \frac{\Gamma(\alpha+1-\beta)\Gamma(\gamma-\alpha-\beta)}{\Gamma(1-\beta)\Gamma(\gamma-\beta)} & e^{i \pi \beta} \frac{\Gamma(\beta+1-\alpha)\Gamma(\gamma-\alpha-\beta)}{\Gamma(1-\alpha)\Gamma(\gamma-\alpha)} \end{array} \right) \left(\begin{array}{cc} (-\alpha)^{\beta-\gamma} & 0 \\ 0 & -(-\alpha)^{\beta}\end{array}\right), \nonumber \\
&\quad = \left(\begin{array}{cc} \frac{\Gamma(\alpha+1-\beta)\Gamma(\alpha+\beta-\gamma)}{\Gamma(\alpha)\Gamma(\alpha+1-\gamma)} & -e^{\pi i (\gamma-\alpha-\beta)}\alpha^{\gamma-2\beta} \frac{\Gamma(\beta+1-\alpha)\Gamma(\alpha+\beta-\gamma)}{\Gamma(\beta)\Gamma(\beta+1-\gamma)} \\ -e^{\pi i(\alpha+\beta-\gamma)}\alpha^{2\beta-\gamma}\frac{\Gamma(\alpha+1-\beta)\Gamma(\gamma-\alpha-\beta)}{\Gamma(1-\beta)\Gamma(\gamma-\beta)} & \frac{\Gamma(\beta+1-\alpha)\Gamma(\gamma-\alpha-\beta)}{\Gamma(1-\alpha)\Gamma(\gamma-\alpha)} \end{array} \right). \nonumber \end{align} 

Using (\ref{eq:f1}), we find for the (1,1) and (2,2) elements: 
\begin{align} &\lim_{\substack{\alpha \rightarrow \infty \\ \text{arg}(\alpha) = \pm \frac{\pi}{2}}} \frac{\Gamma(\alpha+1-\beta)\Gamma(\alpha+\beta-\gamma)}{\Gamma(\alpha)\Gamma(\alpha+1-\gamma)} = 1, \nonumber \\
\text{and } &\lim_{\substack{\alpha \rightarrow \infty \\ \text{arg}(\alpha) = \pm \frac{\pi}{2}}} \frac{\Gamma(\beta+1-\alpha)\Gamma(\gamma-\alpha-\beta)}{\Gamma(1-\alpha)\Gamma(\gamma-\alpha)} = 1, \nonumber \end{align}
respectively, as required. We rewrite the (1,2) and (2,1) elements using (\ref{eq:f2}) as follows:
\begin{align} -e^{\pi i (\gamma-\alpha-\beta)}\alpha^{\gamma-2\beta} &\frac{\Gamma(\beta+1-\alpha)\Gamma(\alpha+\beta-\gamma)}{\Gamma(\beta)\Gamma(\beta+1-\gamma)} \nonumber \\
&= \frac{-e^{i \pi (\gamma-\alpha-\beta)}}{\sin(\pi(\alpha+\beta-\gamma))} \alpha^{\gamma-2\beta} \frac{\Gamma(\beta+1-\alpha)}{\Gamma(\gamma+1-\alpha-\beta)} \frac{\pi}{\Gamma(\beta)\Gamma(\beta+1-\gamma)}, \nonumber \end{align}
and,
\begin{align} -e^{i \pi (\alpha+\beta-\gamma)}\alpha^{2\beta-\gamma} &\frac{\Gamma(\alpha+1-\beta)\Gamma(\gamma-\alpha-\beta)}{\Gamma(1-\beta)\Gamma(\gamma-\beta)} \nonumber \\
&= \frac{-e^{i \pi (\alpha+\beta-\gamma)}}{\sin(\pi(\gamma-\alpha-\beta))} \alpha^{2\beta-\gamma} \frac{\Gamma(\alpha+1-\beta)}{\Gamma(\alpha+\beta+1-\gamma)} \frac{\pi}{\Gamma(1-\beta)\Gamma(\gamma-\beta)},\nonumber \end{align}
respectively. As $\alpha \rightarrow \infty$, the dominant terms in these expressions are $e^{\mp i \pi \alpha}$ respectively; observe that, if arg$(\alpha) = \pm \frac{\pi}{2}$ then $e^{\pm i \pi \alpha} \rightarrow 0$ as $\alpha \rightarrow \infty$, as required. Finally, for the most important computations, we have:
\begin{align} \lim_{\substack{\alpha \rightarrow \infty \\ \text{arg}(\alpha) = - \frac{\pi}{2}}} &\underbrace{\frac{-e^{i \pi (\alpha+\beta-\gamma)}}{\sin(\pi(\gamma-\alpha-\beta))}}_{\text{\normalsize $\rightarrow 2i \text{ by (\ref{eq:f3})}$}} \underbrace{\alpha^{2\beta-\gamma} \frac{\Gamma(\alpha+1-\beta)}{\Gamma(\alpha+\beta+1-\gamma)}}_{\text{\normalsize $\rightarrow 1 \text{ by (\ref{eq:f1})}$}} \frac{\pi}{\Gamma(1-\beta)\Gamma(\gamma-\beta)}, \nonumber \\
&= \frac{2 \pi i}{\Gamma(1-\beta)\Gamma(\gamma-\beta)} \equiv \left(S_{-1}\right)_{2,1}, \nonumber \end{align}
and,
\begin{align} \lim_{\substack{\alpha \rightarrow \infty \\ \text{arg}(\alpha) = \frac{\pi}{2}}} &\underbrace{\frac{-e^{i \pi (\gamma-\alpha-\beta)}}{\sin(\pi(\alpha+\beta-\gamma))}}_{\text{\normalsize $\rightarrow 2i \text{ by (\ref{eq:f3})}$}} \underbrace{\alpha^{\gamma-2\beta} \frac{\Gamma(\beta+1-\alpha)}{\Gamma(\gamma+1-\alpha-\beta)}}_{\text{\normalsize $\rightarrow e^{i \pi (\gamma-2\beta)} \text{ by (\ref{eq:f1})}$}} \frac{\pi}{\Gamma(\beta)\Gamma(\beta+1-\gamma)}, \nonumber \\
&= \frac{2 \pi i e^{i \pi (\gamma-2\beta)}}{\Gamma(\beta)\Gamma(\beta+1-\gamma)} \equiv \left(S_{0}\right)_{1,2}, \nonumber \end{align}
as required by formulae \eqref{eq:kummers-1}.


\appendix
\section{Appendix A: Gauss monodromy data and  Mellin-Barnes integral}

Here, following  \cite{bateman}, \cite{ww} and \cite{andrews}, we re-derive the classical formulae (\ref{eq:ci0})-(\ref{eq:c10}). This is a worthwhile exercise as it gives a greater understanding of how to analytically continue solutions and compute their monodromy data.

We will work with the following Mellin-Barnes integral, 
\begin{align} \frac{1}{2 \pi i} \int^{+i \infty}_{-i \infty} I(s,x) \ ds \quad \text{where} \quad I(s,x) = \frac{\Gamma(\alpha+s)\Gamma(\beta+s)\Gamma(-s)}{\Gamma(c+s)}(-x)^{s}, \label{eq:is} \end{align}
with $|\text{arg}(-x)|<\pi$ and whose path of integration is along the imaginary axis with indentations as necessary so that the poles of $\Gamma(\alpha+s)\Gamma(\beta+s)$ lie on its left and the poles of $\Gamma(-s)$ lie on its right, as shown in Figure \ref{fig:poles} below. It is always possible to construct such a path as long as $\alpha$ and $\beta \notin \mathbb{Z}^{\leq 0}$, which is a general assumption since the case in which $\alpha$ or $\beta \in \mathbb{Z}^{\leq 0}$ corresponds to some of the solutions in (\ref{eq:y0})-(\ref{eq:yinf}) being polynomials. 
\begin{figure}[H] \begin{center}
\includegraphics[scale=.8]{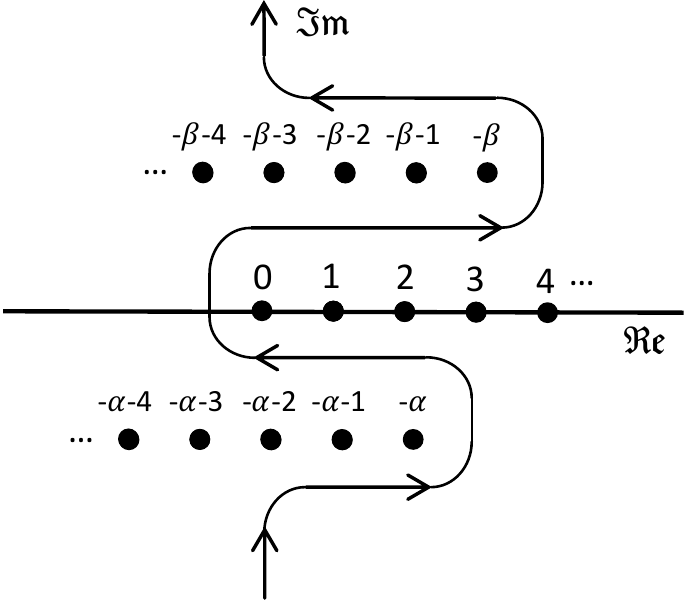}
\caption{\label{fig:poles}Path of integration with indentations as in (\ref{eq:is}).}
\end{center} \end{figure} 
We will prove the following proposition, which is sufficient to derive the connection formulae (\ref{eq:ci0})-(\ref{eq:c10}). 

\begin{proposition} \label{proposition:p} The integral as given by (\ref{eq:is}) satisfies the following properties:
\begin{enumerate} 
\item for $|\text{arg}(-x)| < \pi$,
\[\frac{1}{2 \pi i} \int^{+i \infty}_{-i \infty} I(s,x) \ ds,\]
defines an analytic function of $x$; 
\item for $|\text{arg}(-x)| < \pi$ and $|x| < 1$,
\begin{align} \frac{1}{2 \pi i} \int^{+i \infty}_{-i \infty} I(s,x) \ ds = \frac{\Gamma(\alpha)\Gamma(\beta)}{\Gamma(\gamma)} \ y_{2}^{(0)}(x), \nonumber \end{align} 
where $y_{2}^{(0)}(x)$ is the solution of Gauss equation as given by (\ref{eq:y0}). 
\item for $|\text{arg}(-x)|<\pi$ and $|x|>1$, 
\begin{align} \frac{1}{2 \pi i} \int^{+i \infty}_{-i \infty} I(s,x) \ ds &= \frac{\Gamma(\alpha)\Gamma(\beta-\alpha)}{\Gamma(\gamma-\alpha)} y_{1}^{(\infty)}(x) + \frac{\Gamma(\beta)\Gamma(\alpha-\beta)}{\Gamma(\gamma-\beta)} y_{2}^{(\infty)}(x), \nonumber \end{align}
where $y_{1}^{(\infty)}(x)$ and $y_{2}^{(\infty)}(x)$ are the solutions of Gauss equation as given by (\ref{eq:yinf}). \end{enumerate} \end{proposition}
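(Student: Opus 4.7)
My plan is to prove the three claims by standard Mellin--Barnes techniques: absolute convergence is controlled by Stirling's formula along the imaginary axis, and each series representation is extracted by closing the contour with a large semicircle (to the right for $|x|<1$, to the left for $|x|>1$) and summing residues.

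For (1), along $s=it$ I would use the classical asymptotic $|\Gamma(a+it)|\sim\sqrt{2\pi}\,|t|^{\text{Re}(a)-1/2}e^{-\pi|t|/2}$ on each of the four Gamma factors in $I(s,x)$. The three factors in the numerator contribute total exponential decay $e^{-3\pi|t|/2}$ while the denominator gives growth $e^{+\pi|t|/2}$, so their ratio is bounded by $|t|^{M}e^{-\pi|t|}$ for some exponent $M$ depending only on $\alpha,\beta,\gamma$. Since $|(-x)^{it}|=e^{-t\,\text{arg}(-x)}$, the integrand is bounded by $|t|^{M}e^{-(\pi-|\text{arg}(-x)|)|t|}$, which is absolutely integrable and uniformly so on compacta of $\{x:|\text{arg}(-x)|<\pi\}$. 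Analyticity in $x$ then follows by differentiation under the integral sign.

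For (2), close the contour to the right with semicircles of half-integer radius $R_k=k+\tfrac12$ so as to avoid the poles of $\Gamma(-s)$. On each arc I substitute the reflection formula $\Gamma(-s)=-\pi/[\sin(\pi s)\Gamma(1+s)]$ to remove the singular factor and then apply Stirling; the dominant $s$-dependence comes from $|(-x)^s|\asymp|x|^{\text{Re}(s)}$, which shrinks like $|x|^{R_k}\to 0$ precisely because $|x|<1$. The enclosed closed contour is clockwise, so the residue theorem yields
\begin{equation*}
\frac{1}{2\pi i}\int_{-i\infty}^{+i\infty}I(s,x)\,ds=-\sum_{n=0}^{\infty}\mathrm{Res}_{s=n}\,I(s,x).
\end{equation*}
Plugging in $\mathrm{Res}_{s=n}\Gamma(-s)=(-1)^{n+1}/n!$ and $\Gamma(a+n)=\Gamma(a)(a)_n$ for $a\in\{\alpha,\beta,\gamma\}$ collapses the sum to $(\Gamma(\alpha)\Gamma(\beta)/\Gamma(\gamma))\,{}_2F_1(\alpha,\beta;\gamma;x)$, which is precisely the claimed multiple of $y_2^{(0)}(x)$.

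For (3), close the contour to the left instead; now $|(-x)^s|$ decays as $\text{Re}(s)\to-\infty$ exactly because $|x|>1$, and the symmetric Stirling estimate makes the left arcs vanish. The enclosed poles are those of $\Gamma(\alpha+s)$ at $s=-\alpha-n$ and of $\Gamma(\beta+s)$ at $s=-\beta-n$, and the non-resonance hypothesis $\alpha-\beta\notin\mathbb{Z}$ keeps them simple and disjoint. Using $\mathrm{Res}_{s=-\alpha-n}\Gamma(\alpha+s)=(-1)^n/n!$, and expanding $\Gamma(\beta-\alpha-n)=(-1)^n\Gamma(\beta-\alpha)/(1+\alpha-\beta)_n$ and $\Gamma(\gamma-\alpha-n)=(-1)^n\Gamma(\gamma-\alpha)/(1+\alpha-\gamma)_n$ to re-express the shifted Gammas as Pochhammer ratios, the first family sums to
\begin{equation*}
\frac{\Gamma(\alpha)\Gamma(\beta-\alpha)}{\Gamma(\gamma-\alpha)}(-x)^{-\alpha}\sum_{n=0}^{\infty}\frac{(\alpha)_n(\alpha+1-\gamma)_n}{(\alpha+1-\beta)_n\,n!}\,x^{-n}=\frac{\Gamma(\alpha)\Gamma(\beta-\alpha)}{\Gamma(\gamma-\alpha)}\,y_1^{(\infty)}(x);
\end{equation*}
the second family is identical after swapping $\alpha\leftrightarrow\beta$, producing the $y_2^{(\infty)}(x)$ contribution. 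The main technical obstacle is the uniform vanishing of the semicircular arc integrals: a naive Stirling estimate on an arc of integer radius trips over poles of $\Gamma(-s)$ or of $\Gamma(\alpha+s),\Gamma(\beta+s)$, so one must deform to half-integer radii and apply the reflection formula before estimating. Once that analytic bookkeeping is in place, the extraction of each series is purely algebraic via standard Pochhammer identities.
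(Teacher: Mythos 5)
Your proposal follows essentially the same route as the paper's proof: a Stirling-type bound along the imaginary axis (yielding decay like $|t|^{M}e^{-(\pi-|\arg(-x)|)|t|}$) for analyticity, then closing the contour to the right on half-integer semicircles for $|x|<1$ and to the left for $|x|>1$, with the residues at $s=n$ and at $s=-\alpha-n$, $s=-\beta-n$ (simple and disjoint by $\alpha-\beta\notin\mathbb{Z}$) reproducing exactly the stated series, and your sign and Pochhammer bookkeeping are correct. The only step to tighten in execution is the vanishing of the arcs near the imaginary axis, where $|x|^{\mathrm{Re}(s)}$ alone gives no decay and one must invoke the margin $|\arg(-x)|\le\pi-\delta$ (the paper handles this by splitting the semicircle at $\theta=\pi/4$); this is a refinement of your estimate, not a different idea.
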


\begin{proof} This proof is organised into three parts to prove each statement consecutively.  

We start by proving the analyticity of the integral. We use Euler's reflection formula $\Gamma(-s)\Gamma(s+1) = - \pi \csc(\pi s)$ to re-write the integrand,
\begin{align} I(s,x) = -\frac{\Gamma(\alpha+s)\Gamma(\beta+s)}{\Gamma(c+s)\Gamma(s+1)}\frac{\pi}{\sin(\pi s)}(-x)^{s}. \label{eq:euler} \end{align}
Using the following asymptotic expansion of the Gamma function \cite{ww} \textsection 13.6,
\begin{align} \Gamma(s+a) = s^{s+a-\frac{1}{2}}e^{-s}\sqrt{2\pi} (1+ o(1)), \quad \text{with $|s|$ large,} \label{eq:gammaasy} \end{align}
which is valid for $|\text{arg}(s+a)| < \pi$, we deduce, 
\begin{align}\frac{\Gamma(\alpha+s)\Gamma(\beta+s)}{\Gamma(c+s)\Gamma(s+1)} = \mathcal{O}\left(|s|^{\alpha+\beta-\gamma-1}\right), \quad \text{as $|s| \rightarrow \infty$}. \label{eq:ded} \end{align}
Writing $\sin(\pi s) = \frac{1}{2i}(e^{i\pi s} - e^{-i\pi s})$ we also deduce, 
\begin{align} \sin(\pi s) = \mathcal{O}\left(e^{|s| \pi} \right), \quad \text{as $|s| \rightarrow \infty$}, \label{eq:ded2} \end{align}
along the contour of integration (the imaginary axis). Combining (\ref{eq:ded}) and (\ref{eq:ded2}), the integrand has the following asymptotic behavior, 
\[I(s,x) = \mathcal{O}\left(|s|^{\alpha+\beta-\gamma-1}e^{- |s| \pi} (-x)^{s} \right), \quad \text{as $|s| \rightarrow \infty$},\]
along the contour of integration, we therefore need only consider the analyticity of the following integral, 
\begin{align} &\int^{+i \infty}_{-i\infty} e^{-|s| \pi}(-x)^{s} \ ds \nonumber \\ 
&\quad \equiv i \int^{\infty}_{0}e^{-\sigma \pi} e^{i \sigma(\log |x| + i \text{arg}(-x))} \ d \sigma - i \int^{\infty}_{0} e^{- \sigma \pi} e^{- i \sigma (\log|x| + i \text{arg}(-x))} \ d \sigma. \label{eq:int} \end{align}
We recall the following lemma, see for instance \cite{ww} \textsection 5.32,

\begin{lemma}\label{lemma:ww} If $f: \mathbb{R} \rightarrow \mathbb{R}$ is a continuous function such that $|f(t)| \leq Ke^{rt}$ for constants $K$ and $r$, then the integral $\int_{0}^{\infty} f(t) e^{-\lambda t} \ dt$ defines an analytic function of $\lambda$ for $r<$ Re$(\lambda)$. \end{lemma}

Applying this lemma to the first integral in (\ref{eq:int}), with $r=-\pi$, $K=1$ and $\lambda = \text{arg}(-x)$, we find an analytic function for $- \pi < \text{arg}(-x)$. Applying this lemma to the second integral in (\ref{eq:int}), with $r=-\pi$, $K=1$ and $\lambda = -\text{arg}(-x)$, we find an analytic function for $\text{arg}(-x) < \pi$. This concludes the proof that the integral (\ref{eq:is}) defines an analytic function for $-\pi<\text{arg}(-x)<\pi$. 

We now represent $y_{2}^{(0)}(x)$ using a Mellin-Barnes integral.
We write $I(s,x)$ as in (\ref{eq:euler}) and consider the following integral, 
\[\frac{1}{2 \pi i} \int_{C_{N}} I(s,x) \ ds,\]
for $N \in \mathbb{N}^{\geq 0}$, where $C_{N}$ is the following semicircle, 
\[C_{N} = \left\{s = \left(N+\frac{1}{2}\right)e^{i \theta} : \theta \in \left[-\frac{\pi}{2} , \frac{\pi}{2} \right] \right\}.\]
Let $s \in C_{N}$, using formula (\ref{eq:gammaasy}) from above, we deduce the following asymptotic behavior, 
\begin{align} \frac{\Gamma(\alpha+s)\Gamma(\beta+s)}{\Gamma(\gamma+s)\Gamma(s+1)} = \mathcal{O}\left(N^{\alpha+\beta-\gamma-1}\right), \quad \text{as $N \rightarrow \infty$}, \label{eq:ded3} \end{align}
and, using $\sin(\pi s) = \frac{1}{2i}(e^{i \pi s} - e^{-i \pi s})$, 
\begin{align} \frac{(-x)^{s}}{\sin(\pi s)} = \mathcal{O}\left( \text{\large $e^{\left(N+\frac{1}{2}\right) \left(\cos(\theta) \log|x| - \sin(\theta) \text{arg}(-x) - \pi |\sin(\theta)|\right)}$}\right), \quad \text{as $N \rightarrow \infty$}. \label{eq:ded4} \end{align}
Since $|\text{arg}(-x)|<\pi$, we write $|\text{arg}(-x)| \leq \pi-\delta$ for some $\delta >0$, so that,
\begin{align} \pm \ \text{arg}(-x) + \pi \geq \delta \quad \quad &\Leftrightarrow \quad \quad \sin(\theta) \text{arg}(-x) + |\sin(\theta)| \pi \geq |\sin(\theta)| \delta, \nonumber \\
&\Leftrightarrow \quad \quad \text{\large $e^{-\sin(\theta) \text{arg}(-x) - \pi |\sin(\theta)|}$} \leq \text{\large $e^{-|\sin(\theta)| \delta}$}. \label{eq:ded5} \end{align}
Combining (\ref{eq:ded3})-(\ref{eq:ded5}), the integrand has the following asymptotic behaviour for $s \in C_{N}$, 
\[I(s,x) = \mathcal{O}\left(N^{\alpha+\beta-\gamma-1} \text{\large $e^{\left(N+\frac{1}{2}\right) \left(\cos(\theta) \log|x| - |\sin(\theta)|\delta\right)}$}\right), \quad \text{as $N \rightarrow \infty$}. \]
Since $\cos(\theta)$ and $|\sin(\theta)|$ are even functions, we need only consider $\theta \in \left[ 0,\frac{\pi}{2}\right]$. For $\theta \in \left[0,\frac{\pi}{4}\right]$, $\cos(\theta) \geq \frac{1}{\sqrt{2}}$ and for $\theta \in \left[\frac{\pi}{4} , \frac{\pi}{2} \right]$, $\sin(\theta) \geq \frac{1}{\sqrt{2}}$. Henceforth, we impose the condition that $|x|<1$, or equivalently $\log|x|<0$. For $s \in C_{N}$ we deduce: 
\begin{align}I(s,x) = \left\{ \begin{matrix*}[l] 
\mathcal{O}\left(N^{\alpha+\beta-\gamma-1} \text{\large $e^{\left(N+\frac{1}{2}\right) \frac{1}{\sqrt{2}} \log|x|}$}\right), & \theta \in \left[0,\frac{\pi}{4}\right), \\ 
\mathcal{O}\left(N^{\alpha+\beta-\gamma-1} \text{\large $e^{\left(N+\frac{1}{2}\right)\frac{1}{\sqrt{2}} \left(\log|x| - \delta \right)}$}\right), & \theta = \frac{\pi}{4}, \\
\mathcal{O}\left(N^{\alpha+\beta-\gamma-1} \text{\large $e^{-\left(N+\frac{1}{2}\right) \frac{1}{\sqrt{2}}\delta}$}\right), & \theta \in \left(\frac{\pi}{4} , \frac{\pi}{2} \right], \end{matrix*} \right. \nonumber \end{align}
as $N \rightarrow \infty$. This shows that the integral of $I(s,x)$ along the semicircle $C_{N}$ tends to zero as $N$ tends to infinity, for $|x|<1$ and $|\text{arg}(-x)|<\pi$. Due to Cauchy's theorem, we have,
\begin{align} \frac{1}{2 \pi i} \left(\int^{+i \infty}_{-i\infty} - \int^{+i\infty}_{\left(N+\frac{1}{2}\right)i} - \int_{C_{N}} - \int^{-\left(N+\frac{1}{2}\right) i }_{-i\infty} \right) I(s,x) \ ds = - \sum_{n=0}^{N} \underset{s=n}{\text{Res}} \ I(s,x). \label{eq:figfig} \end{align}
We note that there is a minus sign since the path of integration is a contour oriented clockwise, see Figure \ref{fig:figfig} below. 
\begin{figure}\begin{center}
\includegraphics[scale=.8]{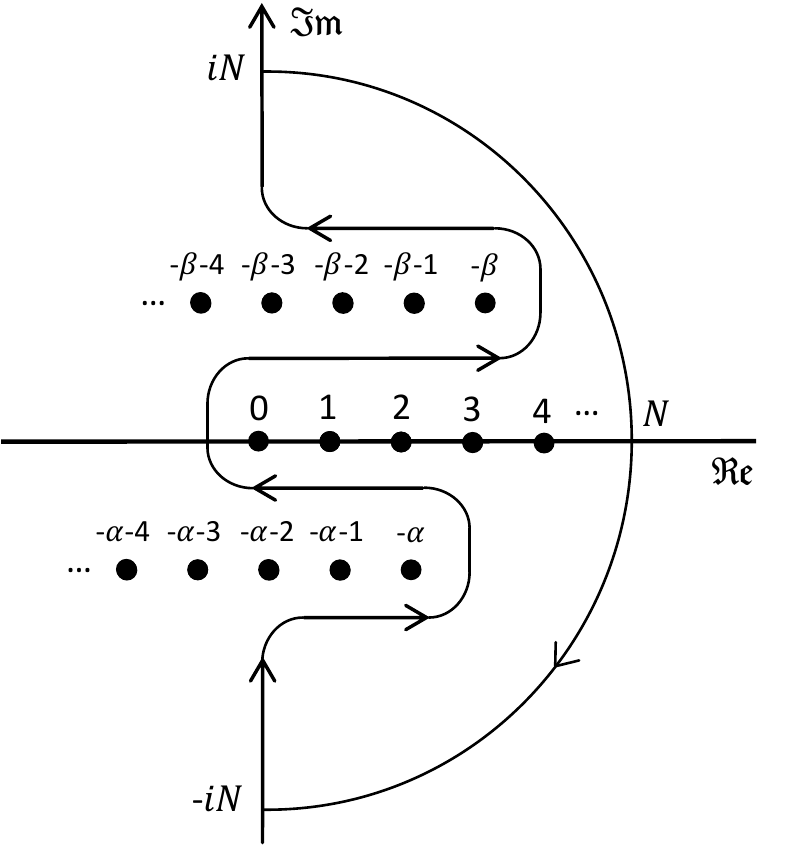}
\caption{\label{fig:figfig}Paths of integration along the imaginary axis and the semicircle $C_{N}$ as in (\ref{eq:figfig}).}
\end{center} \end{figure} 
Using $\underset{\lambda=-n}{\text{Res}}\Gamma(\lambda)=\frac{(-1)^{n}}{n!}$, for $n\geq0$, we compute the residues to find, 
\[\frac{1}{2 \pi i} \int^{+i\infty}_{-i\infty} I(s,x) \ ds = \lim_{N \rightarrow \infty} \sum_{n=0}^{N} \frac{\Gamma(\alpha+n)\Gamma(\beta+n)}{\Gamma(\gamma+n)\Gamma(n+1)}x^{n},\]
for $|x|<1$ and $|\text{arg}(-x)|<\pi$ and the desired result is proved after noting $\frac{\Gamma(\alpha+n)}{\Gamma(\alpha)} \equiv (\alpha)_{n}$. 

Finally, we carry out the analytic continuation of $y_{2}^{(0)}(x)$ for $|x|>1$.
The technique to derive the connection formulae is similar to that already used in the second part of this proof, the main difference being that we will now consider taking an integral on the left hand side of the imaginary axis. For $N \in \mathbb{N}$ consider the integral, 
\[\frac{1}{2 \pi i} \int_{C'_{N}} I(s,x) \ ds,\]
where $C'_{N}$ is the semicircle,
\[C'_{N} = \left\{ s = Ne^{i \theta} : \theta \in \left[ -\frac{3\pi}{2} , -\frac{\pi}{2}\right] \right\}.\]
We summarise the results, following a similar procedure as before. Using (\ref{eq:gammaasy}) we deduce, 
\[\frac{\Gamma(\alpha+s)\Gamma(\beta+s)\Gamma(-s)}{\Gamma(\gamma+s)} = \mathcal{O}\left(N^{\alpha+\beta-\gamma-1}\text{\large $e^{-N \pi |\sin(\theta)|}$}\right),\]
for $s \in C'_{N}$ as $N \rightarrow \infty$, and hence, 
\begin{align} I(s,x) &= \mathcal{O}\left(N^{\alpha+\beta-\gamma-1} \text{\large $e^{N\left(\cos(\theta)\log|x| -\sin(\theta)\text{arg}(-x) - \pi |\sin(\theta)|\right)}$}\right),\nonumber \\
&= \mathcal{O}\left(N^{\alpha+\beta-\gamma-1} \text{\large $e^{N \left(\cos(\theta)\log|x| - |\sin(\theta)| \delta \right)}$}\right), \nonumber \end{align}
where $\delta$ is a small positive number such that $|\text{arg}(-x)|\leq \pi - \delta$. Clearly $\cos(\theta)$ and $-|\sin(\theta)|$ are both non-positive for $\theta \in \left[-\frac{3\pi}{2} , -\frac{\pi}{2} \right]$ and they are never both simultaneously zero. Furthermore, for $|x|>1$ we have $\log|x|>0$, so that the integral of $I(s,x)$ along the semicircle $C'_{N}$ tends to zero as $N$ tends to infinity, for $|x|>1$ and $|\text{arg}(-x)|<\pi$. Due to Cauchy's theorem, we have,
\begin{align} &\frac{1}{2 \pi i} \left(\int^{+i\infty}_{-i\infty} - \int^{+i\infty}_{Ni} - \int_{C'_{N}} - \int^{-Ni}_{-i\infty}\right) I(s,x) \ ds \nonumber \\
&\quad \quad = \sum_{n=0}^{M_{1}(N)} \underset{s=\alpha-n}{\text{Res}} \ I(s,x) + \sum_{n=0}^{M_{2}(N)} \underset{s=\beta-n}{\text{Res}} \ I(s,x), \label{eq:figg} \end{align}
where $M_{1}(N)$ and $M_{2}(N)$ are the number of poles $-\alpha$, $-\alpha-1$, $\ldots$ and $-\beta$, $-\beta-1$, $\ldots$ which lie to the right of the semicircle respectively. Clearly $M_{1}(N)$ and $M_{2}(N)$ become infinite as $N$ tends to infinity, see Figure \ref{fig:figg} below. 
\begin{figure} \begin{center}
\includegraphics[scale=.8]{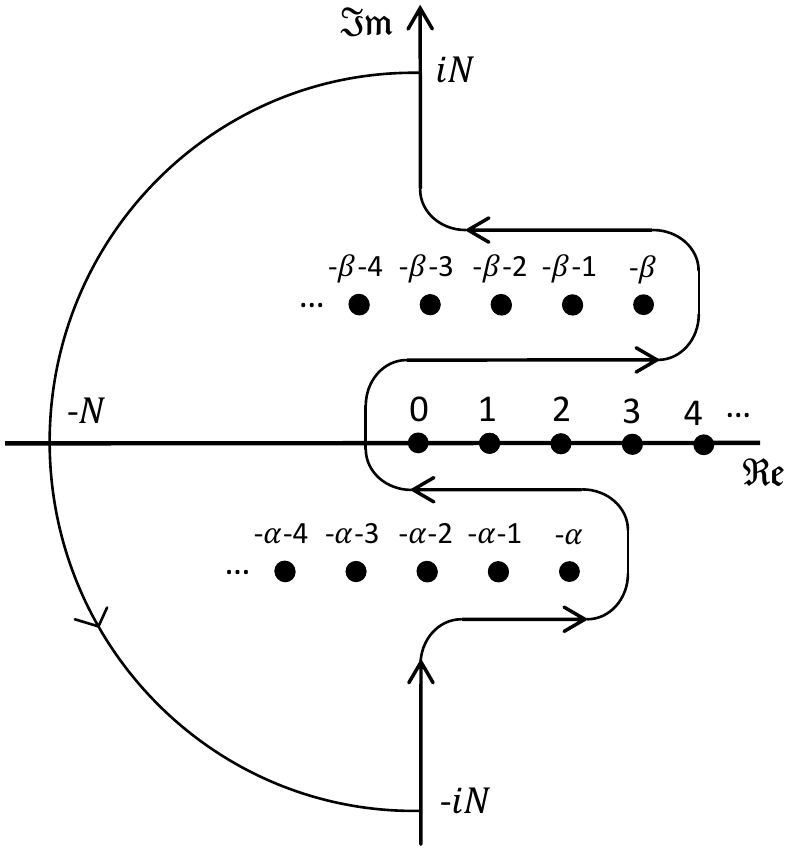}
\caption{\label{fig:figg}Paths of integration along the imaginary axis and the semicircle $C'_{N}$ as in (\ref{eq:figg}).}
\end{center} \end{figure} 
We compute the residues to find, 
\begin{align} \frac{1}{2\pi i} \int^{+i\infty}_{-i\infty}I(s,x) \ ds &= \frac{\Gamma(\alpha)\Gamma(\beta-\alpha)}{\Gamma(\gamma-\alpha)} (-x)^{-\alpha} \lim_{N \rightarrow \infty} \sum_{n=0}^{M_{1}(N)} \frac{(\alpha)_{n}(\alpha+1-\gamma)_{n}}{(\alpha+1-\beta)_{n}n!x^{n}} \nonumber \\
&\quad \quad + \frac{\Gamma(\beta)\Gamma(\alpha-\beta)}{\Gamma(\gamma-\beta)}(-x)^{-\beta} \lim_{N \rightarrow \infty} \sum_{n=0}^{M_{2}(N)} \frac{(\beta)_{n}(\beta+1-\gamma)_{n}}{(\beta+1-\alpha)_{n}n!x^{n}}, \nonumber \end{align}
for $|x|>1$ and $|\text{arg}(-x)|<\pi$ and the desired result is proved. \end{proof}

We conclude these computations by explaining how Proposition \ref{proposition:p} leads to the formulae (\ref{eq:ci0})-(\ref{eq:c10}). Let $\gamma_{j,k}$ be a curve as described at the beginning of this subsection. The second statement in proposition \ref{proposition:p} shows how to represent Gauss $\ _{2}F_{1}$ series using a Mellin-Barnes integral. Due to the analyticity of this integral, as shown in the first statement, the third statement provides the formula for the analytic continuation of Gauss hypergeometric series beyond its radius of convergence. That is to say,
\begin{align} \gamma_{0,\infty} \left[y_{2}^{(0)}\right](x) = \frac{\Gamma(\alpha-\beta)\Gamma(\gamma)}{\Gamma(\alpha-\gamma)\Gamma(\beta)} \ y_{1}^{(\infty)}(x) + \frac{\Gamma(\beta-\alpha)\Gamma(\gamma)}{\Gamma(\beta-\gamma)\Gamma(\alpha)} \ y_{2}^{(\infty)}(x).\nonumber \end{align}
By manipulating the parameters as follows: $\alpha \mapsto \alpha+1-\gamma$, $\beta \mapsto \beta+1-\gamma$, $\gamma \mapsto 2-\gamma$ and multiplying through by $x^{1-\gamma}$ we also deduce,
\begin{align} \gamma_{0,\infty}\left[y_{1}^{(0)}\right](x) &= -e^{-i \pi \gamma}\frac{\Gamma(\beta-\alpha)\Gamma(2-\gamma)}{\Gamma(1-\alpha)\Gamma(\beta+1-\gamma)} \ y_{1}^{(\infty)}(x)\nonumber \\
&\quad \quad - e^{-i \pi \gamma} \frac{\Gamma(\alpha-\beta)\Gamma(2-\gamma)}{\Gamma(1-\beta)\Gamma(\alpha+1-\gamma)} \ y_{2}^{(\infty)}(x), \nonumber \end{align}
recall that we have selected a branch of $\log(x)$ in the definition of our solutions (\ref{eq:y0}) around zero so $x^{1-\gamma}$ is well-defined. These factors constitute the entries of the connection matrix,
\begin{align} &\left(\gamma_{0,\infty}\left[y_{1}^{(0)}\right](x) , \ \gamma_{0,\infty}\left[y_{2}^{(0)}\right](x)\right) = \left(y_{1}^{(\infty)}(x) , \ y_{2}^{(\infty)}(x) \right) \ C^{\infty 0}, \nonumber \end{align}
where,
\begin{align} C^{\infty 0} = \left(\begin{array}{cc} -e^{-i \pi \gamma}\frac{\Gamma(\beta-\alpha)\Gamma(2-\gamma)}{\Gamma(1-\alpha)\Gamma(\beta+1-\gamma)} & \frac{\Gamma(\alpha-\beta)\Gamma(\gamma)}{\Gamma(\alpha-\gamma)\Gamma(\beta)} \\ - e^{-i \pi \gamma} \frac{\Gamma(\alpha-\beta)\Gamma(2-\gamma)}{\Gamma(1-\beta)\Gamma(\alpha+1-\gamma)} & \frac{\Gamma(\beta-\alpha)\Gamma(\gamma)}{\Gamma(\beta-\gamma)\Gamma(\alpha)} \end{array} \right), \nonumber \end{align}
which is indeed the inverse of the connection matrix $C^{0 \infty}$ as given by (\ref{eq:ci0}). To find the analytic continuation of the solutions around $x=1$ we manipulate the variable $x$ as well as the parameters. From the transformations $\alpha \mapsto \alpha$, $\beta \mapsto \beta$, $\gamma \mapsto \alpha+\beta+1-\gamma$ and $x \mapsto 1-x$, we have,
\begin{align} &\gamma_{1,\infty}\left[y_{2}^{(1)}\right](x) = \nonumber \\
&\quad \quad e^{-i \pi \alpha} \frac{\Gamma(\beta-\alpha)\Gamma(\alpha+\beta+1-\gamma)}{\Gamma(\beta)\Gamma(\beta+1-\gamma)} (1-x)^{-\alpha} \ _{2}F_{1}\left(\begin{array}{c} \alpha , \ \gamma-\beta \\ \alpha+1-\beta \end{array} ; (1-x)^{-1} \right) \nonumber \\
&\quad \quad + e^{-i \pi \beta} \frac{\Gamma(\alpha-\beta)\Gamma(\alpha+\beta+1-\gamma)}{\Gamma(\alpha)\Gamma(\alpha+1-\gamma)} (1-x)^{-\beta} \ _{2}F_{1}\left(\begin{array}{c} \beta , \ \gamma-\alpha \\ \beta+1-\alpha \end{array} ; (1-x)^{-1} \right), \nonumber \end{align}
and from the transformations $\alpha \mapsto \gamma-\alpha$, $\beta \mapsto \gamma-\beta$, $\gamma \mapsto \gamma+1-\alpha-\beta$ and $x \mapsto 1-x$,
\begin{align} &\gamma_{1,\infty}\left[y_{1}^{(1)}\right](x) = \nonumber \\
&\quad \quad e^{i \pi (\beta-\gamma)} \frac{\Gamma(\beta-\alpha)\Gamma(\gamma+1-\alpha-\beta)}{\Gamma(1-\alpha)\Gamma(\gamma-\alpha)} (1-x)^{-\alpha} \ _{2}F_{1}\left(\begin{array}{c} \alpha , \ \gamma-\beta \\ \alpha+1-\beta \end{array} ; (1-x)^{-1} \right) \nonumber \\
&\quad \quad + e^{i \pi (\alpha-\gamma)} \frac{\Gamma(\alpha-\beta)\Gamma(\gamma+1-\alpha-\beta)}{\Gamma(1-\beta)\Gamma(\gamma-\beta)} (1-x)^{-\beta} \ _{2}F_{1} \left(\begin{array}{c} \beta , \ \gamma-\alpha \\ \beta+1-\alpha \end{array} ; (1-x)^{-1} \right), \nonumber \end{align}
both for $|\text{arg}(x-1)|<\pi$ and $|x-1|>1$. After applying Kummer transformation, 
\[(1-x)^{-a} \ _{2}F_{1} \left(\begin{array}{c} a, \ c-b \\ a+1-b \end{array} ; (1-x)^{-1}\right) = (-x)^{-a} \ _{2}F_{1}\left(\begin{array}{c} a , \ a+1-c \\ a+1-b \end{array} ; x^{-1} \right),\]
which is valid for $|\text{arg}(x-1)|<\pi$, $|\text{arg}(-x)|<\pi$, $|x-1|>1$ and $|x|>1$, we deduce the connection matrix, 
\begin{align} &\left(\gamma_{0,\infty}\left[y_{1}^{(1)}\right](x) , \ \gamma_{0,\infty}\left[y_{2}^{(1)}\right](x)\right) = \left(y_{1}^{(\infty)}(x) , \ y_{2}^{(\infty)}(x) \right) \ C^{\infty 1}, \nonumber \end{align}
where, 
\begin{align} C^{\infty 1} = \left(\begin{array}{cc} e^{i \pi (\beta-\gamma)} \frac{\Gamma(\beta-\alpha)\Gamma(\gamma+1-\alpha-\beta)}{\Gamma(1-\alpha)\Gamma(\gamma-\alpha)} & e^{-i \pi \alpha} \frac{\Gamma(\beta-\alpha)\Gamma(\alpha+\beta+1-\gamma)}{\Gamma(\beta)\Gamma(\beta+1-\gamma)} \\ e^{i \pi (\alpha-\gamma)} \frac{\Gamma(\alpha-\beta)\Gamma(\gamma+1-\alpha-\beta)}{\Gamma(1-\beta)\Gamma(\gamma-\beta)} & e^{-i \pi \beta} \frac{\Gamma(\alpha-\beta)\Gamma(\alpha+\beta+1-\gamma)}{\Gamma(\alpha)\Gamma(\alpha+1-\gamma)} \end{array} \right), \nonumber \end{align}
which is indeed the inverse of the connection matrix $C^{1 \infty}$ as given by (\ref{eq:ci1}). The connection matrix $C^{01}$ as in (\ref{eq:c10}) can be deduced from the relation,
\[C^{01} = C^{\infty 1} C^{0\infty}.\]

\section{Appendix B: Mellin-Barnes integral for Kummer equation} \

In Appendix B, we follow the classical approach to show that these solutions can be expressed in closed form by certain Mellin-Barnes integrals and thus derive the connection matrices. This analysis allows us to explicitly compute the monodromy data, including Stokes matrices, of Kummer equation in the following section and thus obtain a richer understanding of Stokes phenomenon.

The remainder of this subsection is dedicated to deriving the classical formulae (\ref{eq:kummers-1})-(\ref{eq:kummerc0}). This is a valuable exercise in its own right as it gives us a richer understanding of Stokes phenomenon using a concrete example. Our approach is to use Mellin-Barnes integrals to represent the fundamental solutions $\widetilde{Y}^{(\infty,k)}(z)$, as defined in Theorem \ref{theorem:kummerst}, for which we are able to compute their analytic continuations. Our analysis of Mellin-Barnes integrals is based on Whittaker and Watson's \cite{ww} \textsection 16, who study a different form of the confluent hypergeometric differential equation but is equivalent to ours using analytic transformations. \\

Define the following functions,
\begin{align} &\begin{matrix*}[l] \tilde{y}_{1}^{(\infty,-1)}(z) = e^{-i \pi(\beta-\gamma)} e^{z} \varphi \left(\gamma-\beta, \gamma ; e^{i \pi}z \right), \\ \tilde{y}_{2}^{(\infty,-1)}(z) = -\varphi(\beta, \gamma ; z ), \end{matrix*} &&z \in \widetilde{\Sigma}_{-1}, \label{eq:true2} \\
&\begin{matrix*}[l] \tilde{y}_{1}^{(\infty,0)}(z) = e^{i \pi(\beta-\gamma)} e^{z} \varphi \left(\gamma-\beta, \gamma ; e^{-i \pi}z \right), \\ \tilde{y}_{2}^{(\infty,0)}(z) = -\varphi(\beta, \gamma ; z ), \end{matrix*} &&z \in \widetilde{\Sigma}_{0}, \label{eq:true1} \end{align}
where $\varphi$ is the Mellin-Barnes integral, 
\begin{align} \varphi (\beta,\gamma;z) = \frac{1}{2 \pi i} \int _{-i\infty}^{+i\infty} \frac{\Gamma(s)\Gamma(\beta-s)\Gamma(\beta+1-\gamma-s)}{\Gamma(\beta)\Gamma(\beta+1-\gamma)}z^{s-\beta} \ ds, \label{eq:mb} \end{align}
whose path of integration is along the imaginary axis with indentations as necessary so that the poles of $\Gamma(s)$ lie on its left and the poles of $\Gamma(\beta-s)\Gamma(\beta+1-\gamma-s)$ lie on its right, as shown in Figure \ref{fig:mbint} below. When dealing with $\varphi(\beta,\gamma;z)$ it is to be understood that arg$(z)$ belongs to an interval of length at most $2\pi$, as in (\ref{eq:true2}) and (\ref{eq:true1}), so that we have a well-defined function. 
\begin{figure}\begin{center}
\includegraphics[scale=.8]{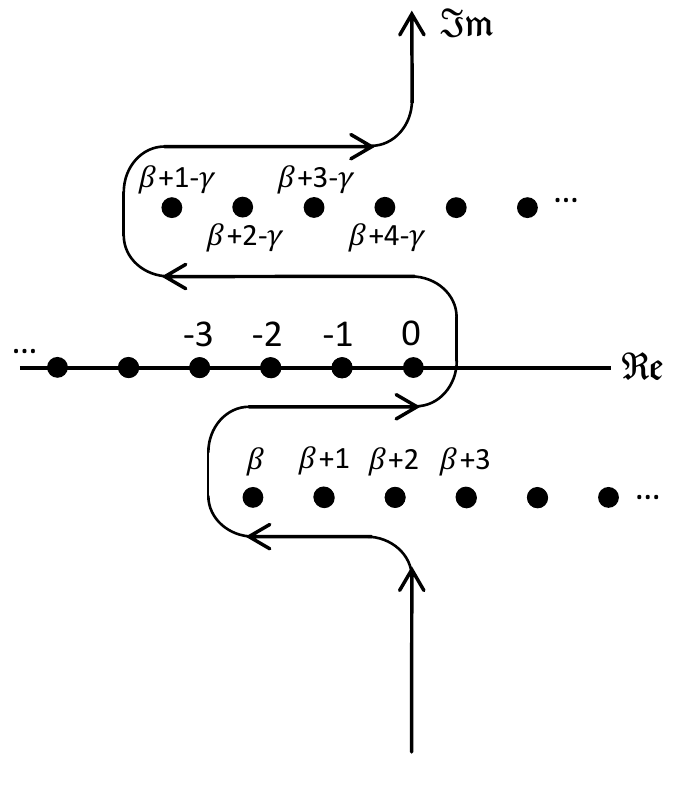}
\caption{\label{fig:mbint} Path of integration in the Mellin-Barnes integral $\varphi(\beta,\gamma;z)$, the dots represent the poles of the integrand.}
\end{center} \end{figure} 

\begin{proposition} \label{proposition:kp} Let $\widetilde{Y}^{(\infty,k)}(z)$ be the fundamental solutions defined in Theorem \ref{theorem:kummerst}. Also let $\tilde{y}_{1}^{(\infty,k)}(z)$ and $\tilde{y}_{2}^{(\infty,k)}(z)$, $k=-1,0$, be the functions defined in (\ref{eq:true2}) and (\ref{eq:true1}) and denote by $\widetilde{Y}\left(\tilde{y}_{1} , \tilde{y}_{2} ; z\right)$ the matrix function given by (\ref{eq:y2}). We have, 
\begin{align} &\widetilde{Y}\left( \tilde{y}_{1}^{(\infty,k)} , \tilde{y}_{2}^{(\infty,k)} ; z \right) = \widetilde{Y}^{(\infty,k)}(z), &&z \in \widetilde{\Sigma}_{k}, \label{eq:toshow} \end{align} 
for $k=-1,0$. \end{proposition}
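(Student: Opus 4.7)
The plan is to invoke the uniqueness clause of Theorem \ref{theorem:kummerst}: the canonical solution $\widetilde{Y}^{(\infty,k)}(z)$ is characterised solely by the asymptotic relation (\ref{eq:chgasy}) on $\widetilde{\Sigma}_k$. By Lemma \ref{lemma:l2}, the matrix $\widetilde{Y}(\tilde y_1^{(\infty,k)},\tilde y_2^{(\infty,k)};z)$ built via (\ref{eq:y2}) is a fundamental solution of (\ref{eq:chg1}) as soon as $\tilde y_1^{(\infty,k)}$ and $\tilde y_2^{(\infty,k)}$ are linearly independent solutions of the scalar Kummer equation (\ref{eq:kummer}). So the proof reduces to three steps: (i) show the two scalar functions in (\ref{eq:true2})--(\ref{eq:true1}) solve (\ref{eq:kummer}); (ii) show that on $\widetilde{\Sigma}_k$, as $z\to\infty$, they are asymptotic to the formal basis $\tilde y_{1,f}^{(\infty)}$, $\tilde y_{2,f}^{(\infty)}$ of Remark \ref{remark:formal}; (iii) deduce the matrix asymptotics (\ref{eq:chgasy}) and appeal to uniqueness.

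For (i), I would substitute $z^{s-\beta}$ into the Kummer operator $L=z\partial_z^2+(\gamma-z)\partial_z-\beta$ to obtain $L[z^{s-\beta}]=(s-\beta)(s-\beta-1+\gamma)z^{s-\beta-1}-sz^{s-\beta}$. Under the integral sign the first term is converted into $+s z^{s-\beta}$ by the shift $s\mapsto s+1$ together with $\Gamma(s+1)=s\Gamma(s)$ and the two identities $(\beta-s-1)\Gamma(\beta-s-1)=\Gamma(\beta-s)$, $(\beta-\gamma-s)\Gamma(\beta-\gamma-s)=\Gamma(\beta+1-\gamma-s)$; the two sign flips cancel and the two contributions to $L[\varphi]$ annihilate. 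Then $e^z\varphi(\gamma-\beta,\gamma;e^{\pm i\pi}z)$ also solves (\ref{eq:kummer}) via the standard substitution $\tilde y(z)=e^z\tilde w(-z)$, which sends a solution of Kummer with parameters $(\gamma-\beta,\gamma)$ to one with parameters $(\beta,\gamma)$. For (ii), I close the contour in (\ref{eq:mb}) to the left, collecting the poles of $\Gamma(s)$ at $s=-n$; the residue sum is
\[
\varphi(\beta,\gamma;z)\sim z^{-\beta}\sum_{n\ge 0}\frac{(\beta)_n(\beta+1-\gamma)_n}{n!}(-z^{-1})^n=z^{-\beta}\,{}_2F_0\bigl(\beta,\beta+1-\gamma;-z^{-1}\bigr),
\]
exactly $-\tilde y_{2,f}^{(\infty)}(z)$. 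The same residue calculation with parameters $(\gamma-\beta,\gamma)$ and argument $e^{\pm i\pi}z$, multiplied by the prefactor $e^{\mp i\pi(\beta-\gamma)}e^z$, reproduces $e^z z^{\beta-\gamma}\,{}_2F_0(\gamma-\beta,1-\beta;z^{-1})=\tilde y_{1,f}^{(\infty)}(z)$, since the factor $e^{\pm i\pi(\beta-\gamma)}$ coming from $(e^{\pm i\pi}z)^{\beta-\gamma}$ cancels the outer phase. Linear independence is immediate from the distinct exponential types, and step (iii) is automatic: Remark \ref{remark:formal} identifies the matrix built via $\widetilde\Psi$ from the formal scalar basis with the right-hand side of (\ref{eq:chgasy}).

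The main obstacle is the contour-deformation step in (ii): one must control the Mellin--Barnes integrand along the arcs closing to the left, using Stirling's asymptotic for the three Gamma factors. Along vertical lines the product of three Gammas decays like $e^{-3\pi|\mathrm{Im}(s)|/2}$, and combining this with $|z^s|=|z|^{\mathrm{Re}(s)}e^{-\mathrm{Im}(s)\arg z}$ one finds that the integral converges, and its value depends only on the homotopy class of the contour, precisely when $|\arg z|<3\pi/2$; this is the sector of validity of $\varphi(\beta,\gamma;z)$. The branch choices $e^{\pm i\pi}z$ in the definition of $\tilde y_1^{(\infty,k)}$ translate this strip by $\mp\pi$, and the intersection of the two sectors of validity for $\tilde y_1^{(\infty,k)}$ and $\tilde y_2^{(\infty,k)}$ turns out to be exactly $\widetilde{\Sigma}_k$ for $k\in\{-1,0\}$. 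This Stirling bookkeeping is entirely parallel to the argument carried out for the Gauss case in Proposition \ref{proposition:p} of Appendix A; everything else in the proof is algebraic manipulation of Gamma functions.
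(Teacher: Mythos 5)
Your overall strategy coincides with the paper's: represent the solutions by the Mellin--Barnes integral $\varphi$, check analyticity on $|\mathrm{arg}(z)|<\tfrac{3\pi}{2}$, verify the scalar Kummer equation by a unit shift of $s$ (your change of variables is the same manipulation as the paper's contour shift, and your homotopy remark together with the fact that the shifted integrand has no pole at $s=0$ justifies it), fix the phases so that the leading behaviour matches $\tilde y_{1,f}^{(\infty)},\tilde y_{2,f}^{(\infty)}$, and conclude by the uniqueness clause of Theorem \ref{theorem:kummerst}. Steps (i) and (iii) and the phase bookkeeping are fine.

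The genuine gap is in step (ii), the asymptotics. You propose to ``close the contour to the left, collecting the poles of $\Gamma(s)$ at $s=-n$'' and you assert that the Stirling bookkeeping is ``entirely parallel'' to Proposition \ref{proposition:p} in Appendix A. That cannot work as stated: the residues at $s=-n$ grow factorially (they are the terms of the divergent $\ _{2}F_{0}$ series), so the full residue sum diverges and the contribution of arcs receding to $\mathrm{Re}(s)=-\infty$ does not tend to zero --- the factors $\Gamma(\beta-s)\Gamma(\beta+1-\gamma-s)$ blow up super-exponentially as $\mathrm{Re}(s)\to-\infty$ and overwhelm $|z|^{\mathrm{Re}(s)}$ for any fixed $z$. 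This is exactly where the Kummer case differs from the Gauss case of Appendix A, where the large-arc contribution vanishes and the residue series converges. What is needed (and what the paper does) is a \emph{finite} displacement of the contour: integrate over the rectangle with vertical sides $\mathrm{Re}(s)=0$ and $\mathrm{Re}(s)=-N-\tfrac12$, pick up only the residues at $s=0,\dots,-N$, and bound the remaining vertical-line integral by $\mathcal{O}\bigl(|z|^{-N-\frac12-\beta}\bigr)$ uniformly for $z$ in the sector, using $|\mathrm{arg}(z)|<\tfrac{3\pi}{2}$ to control the horizontal sides as $\tau\to\infty$. Only this remainder estimate, valid for each finite $N$, yields the asymptotic relation in the sense required by (\ref{eq:chgasy}); the limits in $N$ and $\tau$ cannot be interchanged (the paper flags precisely this point in Remark \ref{remark:sub}). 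Your use of $\sim$ shows you intend an asymptotic statement, but the mechanism you describe would either prove nothing or prove a false convergent identity, so this step needs to be replaced by the finite-shift-plus-remainder argument; with that correction, the rest of your proof goes through as in the paper.
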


\begin{proof} We prove this proposition in three steps: we first show that the functions $\tilde{y}_{1}^{(\infty,k)}(z)$ and $\tilde{y}_{2}^{(\infty,k)}(z)$ are analytic on their respective sectors; using this fact, we secondly show that these functions satisfy Kummer equation (\ref{eq:kummer}); finally, we show that these functions have the correct asymptotic behaviour (\ref{eq:chgasy}). By the uniqueness statement of Theorem \ref{theorem:kummerst}, these conditions are sufficient to conclude (\ref{eq:toshow}).

\vskip 2mm
First step: analyticity of $\tilde{y}_{1}^{(\infty,k)}(z)$ and $\tilde{y}_{2}^{(\infty,k)}(z)$. 

We require formula (\ref{eq:gammaasy}) and Lemma \ref{lemma:ww}, as used in the derivation of Gauss monodromy data formulae. Using (\ref{eq:gammaasy}), we have the following behaviour in the integrand of $\varphi(a,c;z)$,
\begin{align}\Gamma(s)\Gamma(\beta-s)\Gamma(\beta+1-\gamma-s) = \mathcal{O}\left(e^{-\frac{3\pi}{2}|s|}|s|^{2\beta-\gamma-\frac{1}{2}}\right), \quad \text{as $|s|\rightarrow \infty$} \label{eq:behavior}\end{align}
along the contour of integration. We therefore need only consider the analyticity of the following integral, 
\begin{align} &\int_{-i\infty}^{+i\infty} e^{-\frac{3\pi}{2}|s|} z^{s-\beta} \ ds \nonumber \\ 
&\quad \equiv i \int_{0}^{\infty}e^{-\frac{3\pi}{2}|\sigma|} z^{-\beta}e^{i\sigma(\log|z| + i \text{arg}(z))} \ d\sigma - i \int_{0}^{\infty} e^{-\frac{3\pi}{2}|\sigma|} z^{-\beta}e^{-i\sigma(\log|z| + i \text{arg}(z))} \ d\sigma. \nonumber \end{align}
Applying Lemma \ref{lemma:ww} to the first integral, with $r=-\frac{3\pi}{2}$, $K=1$ and $\lambda = \text{arg}(z)$, we find an analytic function for $-\frac{3\pi}{2} < \text{arg}(z)$. Applying Lemma \ref{lemma:ww} to the second integral, with $r=-\frac{3\pi}{2}$, $K=1$ and $\lambda = -\text{arg}(z)$, we find an analytic function for arg$(z) < \frac{3\pi}{2}$. We conclude that $\varphi(\beta,\gamma;z)$ defines analytic functions $\tilde{y}_{2}^{(\infty,-1)}(z)$ and $\tilde{y}_{2}^{(\infty,0)}(z)$ on their respective sectors $\widetilde{\Sigma}_{-1}$ and $\widetilde{\Sigma}_{0}$. It therefore follows that $\tilde{y}_{1}^{(\infty,-1)}(z)$ and $\tilde{y}_{1}^{(\infty,0)}(z)$ are also analytic functions, since $\varphi \left(\gamma-\beta-1,\gamma;e^{i \pi z} \right)$ must be analytic on $z \in \widetilde{\Sigma}_{-1}$ and $\varphi\left(\gamma-\beta-1,\gamma;e^{-i \pi z}\right)$ must be analytic on $z \in \widetilde{\Sigma}_{0}$. 
\vskip 2mm
Second step:  Showing $\tilde{y}_{1}^{(\infty,k)}(z)$ and $\tilde{y}_{2}^{(\infty,k)}(z)$ satisfy the Kummer equation (\ref{eq:kummer}). 

We will now substitute $\varphi(\beta,\gamma;z)$ for $\tilde{y}(z)$ into the left hand side of Kummer equation (\ref{eq:kummer}) and show that the result is zero. Having established the analyticity of $\varphi(\beta,\gamma;z)$ on the sectors $\widetilde{\Sigma}_{-1}$ and $\widetilde{\Sigma}_{0}$, we can compute the derivatives of this integral by taking the derivatives inside the integral. After multiplying through by $2 \pi i \Gamma(\beta)\Gamma(\beta+1-\gamma)$ to cancel all multiplicative constant terms, we find,  
\begin{align} & \left(z \ \varphi''(\beta,\gamma;z) + (\gamma-z) \ \varphi'(\beta,\gamma;z) - \beta \ \varphi(\beta,\gamma;z)\right) 2 \pi i \Gamma(\beta)\Gamma(\beta+1-\gamma) \nonumber \\
&= \int_{-i\infty} ^{+i \infty} \Gamma(s)\Gamma(\beta+2-s)\Gamma(\beta+1-\gamma-s)z^{s-\beta-1} \ ds \nonumber \\
& \quad \quad \quad -\int_{-i\infty}^{+i\infty}\gamma\Gamma(s)\Gamma(\beta+1-s)\Gamma(\beta+1-\gamma-s) z^{s-\beta-1} \ ds \nonumber \\
& \quad \quad \quad + \int_{-\infty}^{+\infty} \Gamma(s)\Gamma(\beta+1-s)\Gamma(\beta+1-\gamma-s)z^{s-\beta} \ ds \nonumber \\
& \quad \quad \quad - \int_{-\infty}^{+\infty} (\beta)\Gamma(s)\Gamma(\beta-s)\Gamma(\beta+1-\gamma-s) z^{s-\beta} \ ds \nonumber \\
&= \int^{-1+i \infty}_{-1-i \infty} \Gamma(s+1) \Gamma(\beta-\gamma-s)z^{s-\beta} \left(\Gamma(\beta+1-s) - \gamma \Gamma(\beta-s) \right) \ ds  \nonumber \\
& \quad \quad \quad - \int_{-i \infty}^{+i \infty} \Gamma(s)\Gamma(\beta+1-\gamma-s) z^{s-\beta} \left((\beta)\Gamma(\beta-s)-\Gamma(\beta+1-s)\right)\ ds \nonumber \\
&= \left( \int^{-1+i \infty}_{-1-i \infty} - \int^{+i \infty}_{- i \infty} \right) \Gamma(s+1)\Gamma(\beta-s)\Gamma(\beta+1-\gamma-s)z^{s-\beta} \ ds. \label{eq:expression} \end{align}
Due to the choice of the path of integration, the final integrand has no poles between the contours of integration, see Figure \ref{fig:extra} below. Therefore, due to Cauchy's theorem, the expression equals zero and we have shown that $\varphi(\beta,\gamma;z)$ satisfies Kummer confluent hypergeometric equation (\ref{eq:kummer}) on $z \in \widetilde{\Sigma}_{-1}$ and $\widetilde{\Sigma}_{0}$. 
\begin{figure} \begin{center}
\includegraphics[scale=.8]{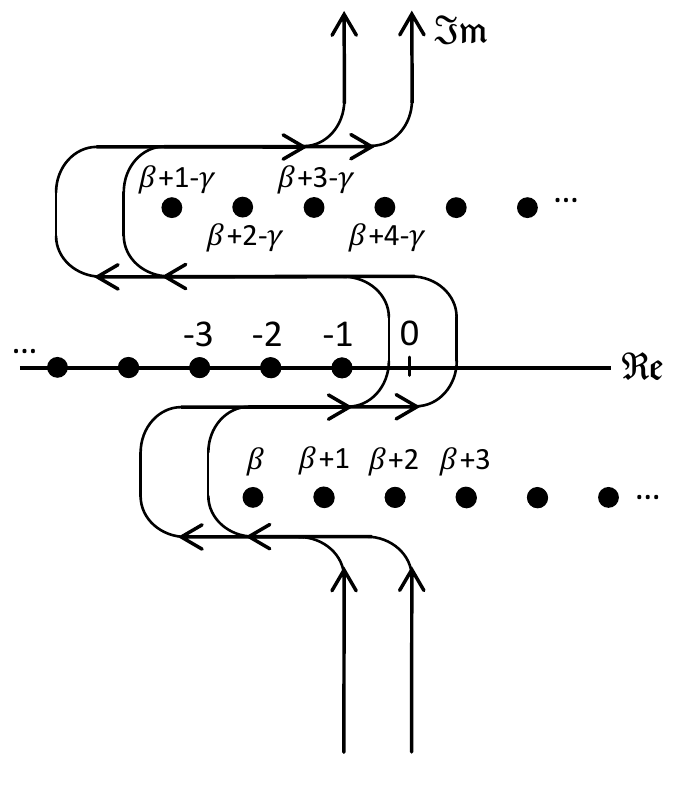}
\caption{\label{fig:extra}Paths of integration in (\ref{eq:expression}), the dots represent poles of the integrand. Note the crucial detail that $s=0$ is not a pole of the integrand, so there are no singularities between the two paths.}
\end{center} \end{figure} 

Observe the following differential identity, 
\begin{align} &z \ \frac{d^{2}}{dz^{2}}\left(e^{z}f(-z)\right) + (\gamma-z) \ \frac{d}{dz}\left(e^{z}f(-z)\right) - \beta \ e^{z}f(-z) \nonumber \\
&\quad \equiv e^{z} \left(z \ \frac{d^{2}}{dz^{2}}f(z) - (\gamma - (-z)) \ \frac{d}{dz}f(z) - (\gamma-\beta) \ f(z)\right). \nonumber \end{align} 
Given that $\varphi(\beta,\gamma;z)$ satisfies Kummer equation (\ref{eq:kummer}), it follows that the right hand side of this identity equals zero for $f(-z) = \varphi(\gamma-\beta,\gamma;-z)$. Looking at the left hand side of the identity, we deduce that $e^{z}\varphi(\gamma-\beta,\gamma;-z)$ also satisfies equation (\ref{eq:kummer}). 

\vskip 2mm Third step: Asymptotic behaviour of $\tilde{y}_{1}^{(\infty,k)}(z)$ and $\tilde{y}_{2}^{(\infty,k)}(z)$ for large $|z|$.

Recalling the formal solutions given in Remark \ref{remark:formal}, we will deduce the following asymptotics, for $j\in\{0,-1\}$:
\begin{align} y_{1}^{(\infty,j)}(z) &\sim e^{z}z^{\beta-\gamma} \ _{2}F_{0}\left(\gamma-\beta , \ 1-\beta ; z^{-1} \right), \quad \text{as } z \rightarrow \infty, \ z \in \widetilde{\Sigma}_{j}, \label{eq:firstasy} \\
y_{2}^{(\infty,j)}(z) &\sim -z^{-\beta} \ _{2}F_{0} \left(\beta , \ \beta+1-\gamma ; -z^{-1}\right), \quad \text{as } z \rightarrow \infty, \ z \in \widetilde{\Sigma}_{j}. \label{eq:secondasy} \end{align}
Denote the integrand of $\varphi(\beta,\gamma;z)$ by, 
\begin{align} I(s,z) = \frac{\Gamma(s)\Gamma(\beta-s)\Gamma(\beta+1-\gamma-s)}{\Gamma(\beta)\Gamma(\beta+1-\gamma)}z^{s-\beta}, \label{eq:integrand} \end{align}
and let $\tau$ be a large, positive real number. For $N \geq 0$, consider the path of integration along the rectangle $R$ with vertices at $\pm i \tau$ and $-N - \frac{1}{2} \pm i \tau$, with indentations so that the poles of the integrand are separated as usual and with a positive orientation as shown in Figure \ref{fig:curve7} below. 
\begin{figure} \begin{center}
\includegraphics[scale=.8]{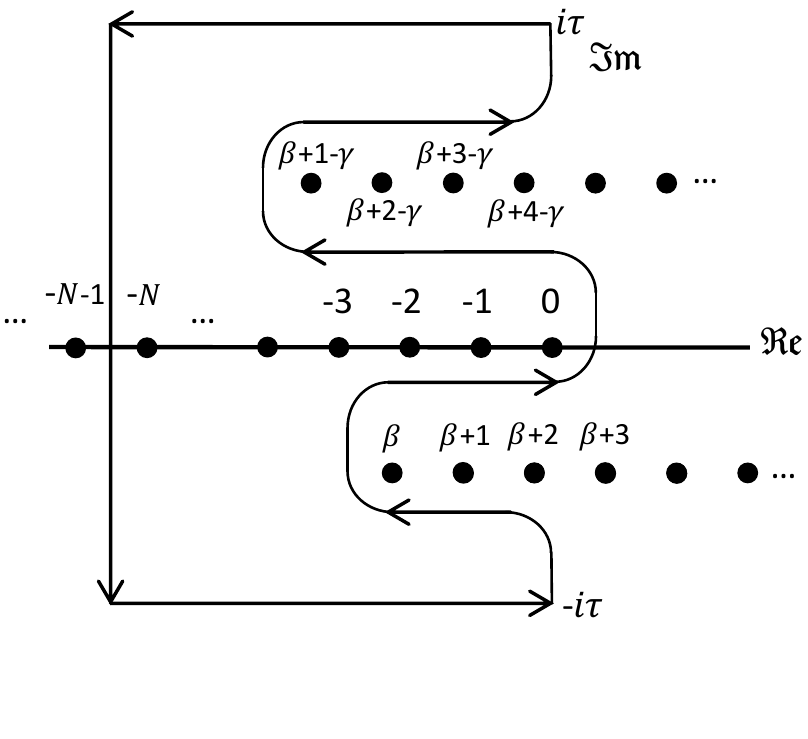}
\caption{\label{fig:curve7}Path of integration around the rectangle $R$, the dots represent the poles of the integrand of $\varphi(\beta,\gamma;z)$.}
\end{center} \end{figure} 
By Cauchy's theorem, we have, 
\begin{align} \frac{1}{2\pi i} \int_{R} I(s,z) \ ds &= \frac{1}{2\pi i} \left( \int_{-N-\frac{1}{2}-i \tau}^{-i \tau} + \int_{-i \tau}^{+i \tau} + \int_{+i \tau}^{-N -\frac{1}{2}+i \tau} + \int_{-N-\frac{1}{2}+i \tau}^{-N-\frac{1}{2}-i \tau} \right) I(s,z) \ ds. \nonumber \\
&= \sum _{n=0}^{N} \underset{s=-n}{\text{Res}}I(s,z), \nonumber \end{align}
We examine these integrals in the limit $\tau \rightarrow \infty^{+}$ one-by-one, using the asymptotics (\ref{eq:gammaasy}) of the Gamma function: 
\begin{enumerate} 
\item By writing $s=x-i \tau$ in the first integral we obtain, 
\begin{align} e^{\tau \left(\text{arg}(z)-\frac{3\pi}{2}\right)} \int_{-N-\frac{1}{2}}^{0}\mathcal{O}\left(|z|^{x}\tau^{\text{Re}(2\beta-\gamma)-x-\frac{1}{2}}\right) \ dx, \nonumber \end{align}
which tends to zero as $\tau \rightarrow \infty^{+}$, thanks to arg$(z) < \frac{3\pi}{2}$. \\
\item In the limit $\tau \rightarrow \infty^{+}$, the second integral becomes $\varphi(\beta,\gamma;z)$, by definition. \\
\item Similarly to the first integral, by writing $s=x+i\tau$ in the third integral, we obtain,
\begin{align} e^{-\tau \left(\text{arg}(z)+\frac{3\pi}{2}\right)} \int_{0}^{-N-\frac{1}{2}} \mathcal{O} \left(|z|^{x}\tau^{\text{Re}(2\beta-\gamma)-x-\frac{1}{2}} \right) \ dx, \nonumber \end{align}
which also tends to zero as $\tau \rightarrow \infty^{+}$, thanks to arg$(z) > -\frac{3\pi}{2}$.\\ 
\item We write $s=-N-\frac{1}{2}+i y$ in the fourth integral to obtain, 
\begin{align} \int_{-N-\frac{1}{2}+i\tau}^{-N-\frac{1}{2}-i \tau}I(s,z) \ ds &= i \left(\int_{0}^{-\tau} + \int_{\tau}^{0} \right) I\left(-N-\frac{1}{2}+iy,z\right) \ dy \nonumber \\
&= iz^{-N-\frac{1}{2}-\beta} \left( \int_{0}^{\tau} \mathcal{O} \left(|y|^{N+\text{Re}(2\beta-\gamma)-1}e^{-y\left(\frac{3\pi}{2}-\text{arg}(z)\right)}\right) \ dy  \right. \nonumber \\
& \quad \left. - \int^{\tau}_{0} \mathcal{O}\left( |y|^{N+\text{Re}(2\beta-\gamma)-1}e^{-y\left( \text{arg}(z)+\frac{3\pi}{2}\right)}\right) \ dy \right). \label{eq:important} \end{align}
Using the fact that $\lim_{\tau \rightarrow \infty^{+}} \int_{0}^{\tau}e^{-k y} \ dy$ for $k>0$ exists, the limit as $\tau \rightarrow \infty^{+}$ of the fourth integral exists and is of order $\mathcal{O}\left( |z|^{-N-\frac{1}{2}-\beta} \right)$ as $\tau \rightarrow \infty^{+}$, thanks to $|\text{arg}(z)|<\frac{3\pi}{2}$. \end{enumerate}
Summarising the above analysis, we have shown that for large $\tau$,  
\begin{align}\varphi(\beta,\gamma;z) &= \sum_{n=0}^{N} \underset{s=-n}{\text{Res}}I(s,z) + \mathcal{O}\left(|z|^{-N-\frac{1}{2}-\beta}\right), \label{eq:notfound} \\
&= z^{-\beta} \sum_{n=0}^{N} \frac{(\beta)_{n}(\beta+1-\gamma)_{n}}{(-z)^{n}n!} + \mathcal{O}\left(|z|^{-N-\frac{1}{2}-\beta}\right), \nonumber \end{align}
where we have used the formula $\underset{\lambda=-n}{\text{Res}}\Gamma(\lambda) = \frac{(-1)^{n}}{n!}$, for $n \geq 0$, to calculate the residues. This proves (\ref{eq:secondasy}). Moreover, for $N \geq 0$, we can immediately deduce, 
\[e^{\mp i \pi (\beta-\gamma)} e^{z} \varphi \left(\gamma-\beta, \gamma ; e^{\pm i \pi}z \right) = e^{z} z^{\beta-\gamma} \sum_{n=0}^{N} \frac{(\gamma-\beta)_{n}(1-\beta)_{n}}{z^{n}n!} + \mathcal{O}\left(e^{z}|z|^{-N-\frac{1}{2}+\beta-\gamma}\right),\]
which proves (\ref{eq:firstasy}). \end{proof}

\begin{remark} \label{remark:sub} The expression (\ref{eq:notfound}) is valid for all finite $N$. In order to take the limit as $N \rightarrow \infty$ it is important to understand that (\ref{eq:notfound}) becomes an asymptotic result. This is because the integrals in (\ref{eq:important}) depend on $N$ and, in particular, they diverge as $N \rightarrow \infty^{+}$, hence the interchange between limits $\lim_{N\rightarrow \infty^{+}}$ and $\lim_{\tau \rightarrow \infty^{+}}$ is not justified here. \end{remark}

Having established how to represent the fundamental solutions $\widetilde{Y}^{(\infty,k)}(z)$ using Mellin-Barnes integrals, we now show how to analytically continue them to $z=0$. We will prove the following proposition, which is sufficient to deduce the monodromy data formulae (\ref{eq:kummers-1})-(\ref{eq:kummerc0}).

\begin{proposition} \label{proposition:kpp} Let $\tilde{y}_{1}^{(0)}(z)$ and $\tilde{y}_{2}^{(0)}(z)$ be the solutions as given in (\ref{eq:yt0}). For $- \pi \pm\frac{\pi}{2} < \text{arg}(z) < \pi \pm \frac{\pi}{2}$, the integral as given by (\ref{eq:mb}) satisfies,
\[\varphi(\beta,\gamma;z) = \frac{\Gamma(\gamma-1)}{\Gamma(\beta)} \tilde{y}_{1}^{(0)}(z) + \frac{\Gamma(1-\gamma)}{\Gamma(\beta+1-\gamma)} \tilde{y}_{2}^{(0)}(z).\] \end{proposition}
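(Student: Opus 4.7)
The plan is to evaluate $\varphi(\beta,\gamma;z)$ by closing the contour of integration to the \emph{right} of the imaginary axis, picking up the residues of the two families of poles of $\Gamma(\beta-s)\Gamma(\beta+1-\gamma-s)$, namely $s=\beta+n$ and $s=\beta+1-\gamma+n$ for $n\geq 0$. This is the dual move to that used in the proof of Proposition \ref{proposition:kp}, where closing to the left at $s=-n$ produced the asymptotic series at $z=\infty$. Here closing to the right will produce convergent power series in $z$ and $z^{1-\gamma}$, which is exactly the analytic behaviour expected of a solution near the Fuchsian singularity $z=0$. The non-resonance assumption $\gamma \notin \mathbb{Z}$ ensures that the two families of poles do not collide, so every pole is simple and we can evaluate residues cleanly.

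Concretely, I would fix a large $N$ and a large $\tau$ and close the contour along the rectangle with vertices $\pm i\tau$ and $(N+\tfrac12)\pm i\tau$, oriented clockwise (with indentations so that the two families of right-hand poles lie inside). Cauchy's theorem gives
\[
\varphi(\beta,\gamma;z) \;=\; -\sum_{n=0}^{N}\Bigl(\mathop{\mathrm{Res}}_{s=\beta+n} I(s,z) \;+\; \mathop{\mathrm{Res}}_{s=\beta+1-\gamma+n} I(s,z)\Bigr) \;+\; E_{N,\tau}(z),
\]
where $I(s,z)$ is the integrand of \eqref{eq:mb} and $E_{N,\tau}$ collects the three integrals over the remaining sides of the rectangle. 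The residue at $s=\beta+n$, after using $\mathrm{Res}_{s=\beta+n}\Gamma(\beta-s)=-(-1)^n/n!$ and the identity $\Gamma(1-\gamma-n)=(-1)^n\Gamma(1-\gamma)/(\gamma)_n$, simplifies to
\[
-\,\frac{(\beta)_n}{(\gamma)_n\, n!}\,\frac{\Gamma(1-\gamma)}{\Gamma(\beta+1-\gamma)}\,z^n,
\]
whose sum produces $-\frac{\Gamma(1-\gamma)}{\Gamma(\beta+1-\gamma)}\,{}_1F_1(\beta;\gamma;z)$; by a completely parallel computation the residues at $s=\beta+1-\gamma+n$ sum to $-\frac{\Gamma(\gamma-1)}{\Gamma(\beta)}\,z^{1-\gamma}\,{}_1F_1(\beta+1-\gamma;\,2-\gamma;\,z)$. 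Recognising these as $-\tilde y_2^{(0)}(z)$ and $-\tilde y_1^{(0)}(z)$ (see \eqref{eq:yt0}) and inserting the overall minus sign from the clockwise orientation yields the claimed connection formula.

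The main obstacle is showing that $E_{N,\tau}(z)\to 0$ as $\tau\to\infty$ and then $N\to\infty$, and it is precisely this step that pins down the sector $-\pi\pm\tfrac\pi2 <\arg(z)<\pi\pm\tfrac\pi2$. Using the Stirling estimate \eqref{eq:gammaasy} on the integrand, and writing $s=x\pm i\tau$ on the horizontal sides, the two horizontal contributions are bounded by $e^{\tau(\arg(z)\mp 3\pi/2)}$ times an algebraically growing integral, which forces $|\arg(z)|<3\pi/2$ (equivalently $\arg(z)\in(-3\pi/2,\pi/2)$ or $\arg(z)\in(-\pi/2,3\pi/2)$, depending on the chosen branch of $z^s$). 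The remaining vertical side at $\mathrm{Re}(s)=N+\tfrac12$ is handled by the same type of estimate already used in the proof of Proposition \ref{proposition:kp}, yielding $\mathcal{O}(|z|^{N+1/2-\beta})$ uniformly on compact subsets of the open sector, so that it vanishes as $N\to\infty$ provided $|z|$ is sufficiently small. Since both sides of the claimed identity are analytic throughout the stated sector, the equality then extends by analytic continuation from a neighbourhood of $z=0$ to the full sector, completing the proof.
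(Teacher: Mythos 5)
Your proposal is correct and follows essentially the same route as the paper's proof: closing the contour to the right over the rectangle with vertices $\pm i\tau$ and $N+\tfrac12\pm i\tau$, killing the horizontal sides via the Stirling estimate (which is what enforces $|\text{arg}(z)|<\tfrac{3\pi}{2}$), showing the right-hand vertical side vanishes as $N\rightarrow\infty$ for $|z|$ small, and summing the residues at $s=\beta+n$ and $s=\beta+1-\gamma+n$ to obtain the two $\ _{1}F_{1}$ series. The only cosmetic difference is that you make the final analytic-continuation step from small $|z|$ to the full sector explicit, which the paper leaves implicit.
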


\begin{proof} Let $I(s,z)$ be the integrand of $\varphi(\beta,\gamma;z)$ as given by (\ref{eq:integrand}). For large $\tau>0$ and an integer $N>0$, we now consider the integral around the rectangle $R'$ with vertices $\pm i \tau$ and $N + \frac{1}{2} \pm i \tau$, with indentations along the imaginary axis as usual and with a negative orientation as shown in Figure \ref{fig:curve9} below. Our analysis of this integral is analogous to that of the integral around the rectangle $R$, which lies to the left of the imaginary axis. 
\begin{figure}\begin{center}
\includegraphics[scale=.8]{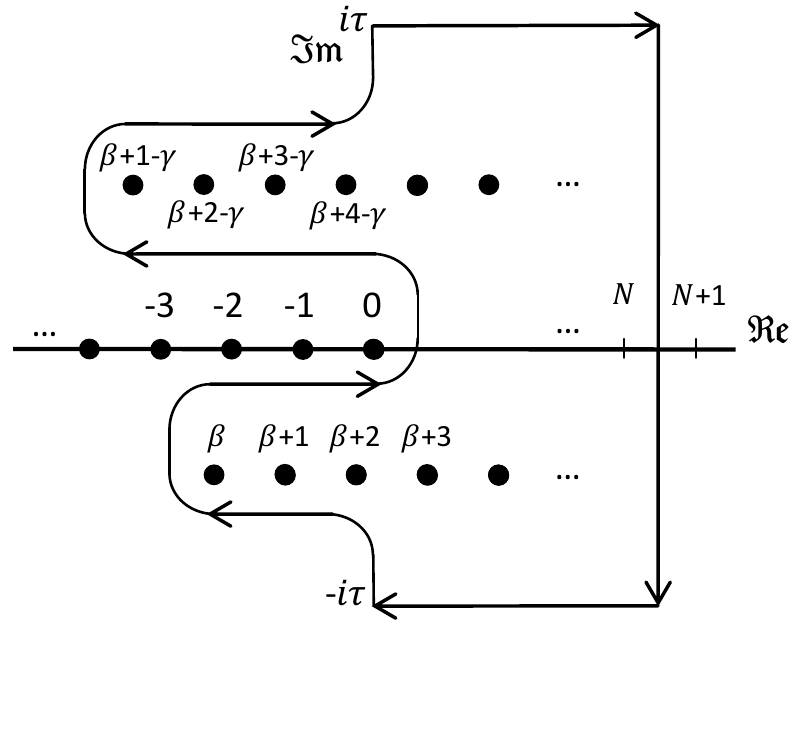}
\caption{\label{fig:curve9}Path of integration around the rectangle $R'$, the dots represent the poles of the integrand of $\varphi(\beta,\gamma;z)$.}
\end{center} \end{figure} 
By Cauchy's theorem, we have, 
\begin{align} \frac{1}{2\pi i} \int_{R'}I(s,z) \ ds &\equiv \frac{1}{2\pi i} \left( \int_{N + \frac{1}{2} -i \tau}^{-i \tau} + \int_{-i \tau}^{i \tau} + \int_{i \tau}^{N + \frac{1}{2}+i \tau} + \int_{N + \frac{1}{2} + i \tau}^{N + \frac{1}{2}-i \tau}\right) I(s,z) \ ds \nonumber \\
&= -\sum_{n=0}^{M_{1}(N)} \underset{s=\beta+1-\gamma+n}{\text{Res}}I(s,z) - \sum_{n=0}^{M_{2}(N)} \underset{s=\beta+n}{\text{Res}}I(s,z) \nonumber \end{align}
where $M_{1}(N)$ and $M_{2}(N)$ are the number of poles $\beta+1-\gamma$, $\beta+2-\gamma$, $\ldots$ and $\beta$, $\beta+1$, $\ldots$ which lie inside the rectangle respectively. We examine these integrals under the limit $\tau \rightarrow \infty^{+}$ one-by-one, using the asymptotics (\ref{eq:gammaasy}) of the Gamma function:
\begin{enumerate} \item By writing $s=x-i \tau$ in the first integral we obtain,
\[ e^{\tau \left(\text{arg}(z)-\frac{3\pi}{2}\right)} \int_{N + \frac{1}{2}}^{0} \mathcal{O} \left(|z|^{x} \tau^{\text{Re}(2\beta - \gamma)-x-\frac{1}{2}}\right) \ dx,\]
which tends to zero as $\tau \rightarrow \infty^{+}$, thanks to arg$(z) < \frac{3\pi}{2}$. \\
\item In the limit $\tau \rightarrow \infty^{+}$, the second integral becomes $\varphi(\beta,\gamma;z)$, by definition. \\ 
\item Similarly to the first integral, by writing $s=x+i\tau$ in the third integral, we obtain, 
\[e^{-\tau \left(\text{arg}(z)+\frac{3\pi}{2}\right)} \int_{i \tau}^{N + \frac{1}{2} + i \tau} \mathcal{O}\left(|z|^{x} \tau^{\text{Re}(2\beta-\gamma)-x-\frac{1}{2}}\right) \ dx, \]
which also tends to zero as $\tau \rightarrow \infty^{+}$, thanks to arg$(z) > -\frac{3\pi}{2}$. \\
\item We write $s=N + \frac{1}{2}+iy$ in the fourth integral, to obtain, 
\begin{align} \int_{N+\frac{1}{2}+i\tau}^{N+\frac{1}{2}-i \tau}I(s,z) \ ds &= i \left(\int_{0}^{-\tau} + \int_{\tau}^{0} \right) I\left(N+\frac{1}{2}+iy,z\right) \ dy \nonumber \\
&= iz^{N+\frac{1}{2}-\beta} \left( \int_{0}^{\tau} \mathcal{O} \left(|y|^{-N+\text{Re}(2\beta-\gamma)-2}e^{-y\left(\frac{3\pi}{2}-\text{arg}(z)\right)}\right) \ dy \right. \nonumber \\
& \quad \left. - \int_{0}^{\tau} \mathcal{O}\left( |y|^{-N+\text{Re}(2\beta-\gamma)-2}e^{-y\left( \text{arg}(z)+\frac{3\pi}{2}\right)}\right) \ dy \right). \label{eq:important2} \end{align}
Using the fact that $\lim_{\tau \rightarrow \infty^{+}} \int^{\tau}_{0}e^{-k y} \ dy$ for $k>0$ exists, the limit as $\tau \rightarrow \infty^{+}$ of fourth integral exists, thanks to $|\text{arg}(z)|< \frac{3\pi}{2}$. Moreover, for $|z|$ sufficiently small, this limit exists uniformly with respect to large $N$, due to the minus sign in the exponent of $|y|$. In particular, for $|z|$ sufficiently small, 
\[\lim_{N \rightarrow \infty^{+}} \int^{N+\frac{1}{2}-i \tau}_{N+\frac{1}{2}-i \tau} I(s,z) \ ds = 0.\] \end{enumerate}

Summarising the above analysis, we have shown the following,
\begin{align} \varphi(\beta,\gamma;z) = -\sum_{n=0}^{M_{1}(N)} \underset{s=\beta+1-\gamma-n}{\text{Res}}I(s,z) - \sum_{n=0}^{M_{2}(N)} \underset{s=\beta-n}{\text{Res}}I(s,z) + \lim_{\tau \rightarrow \infty^{+}} \int_{N + \frac{1}{2}-i \tau}^{N + \frac{1}{2}+i \tau}I(s,z) \ ds,\nonumber \end{align}
where the convergence of the limit of this integral is uniform with respect to $N \rightarrow \infty^{+}$. As such, we may interchange the limits $\lim_{\tau\rightarrow\infty^{+}}$ and $\lim_{N \rightarrow \infty^{+}}$ as follows,
\begin{align} \varphi(\beta,\gamma;z) 
&= -\lim_{N \rightarrow \infty^{+}}\sum_{n=0}^{M_{1}(N)} \underset{s=\beta+1-\gamma+n}{\text{Res}}I(s,z) - \lim_{N \rightarrow \infty^{+}} \sum_{n=0}^{M_{2}(N)} \underset{s=\beta+n}{\text{Res}}I(s,z) \nonumber \\
&\hspace{80pt} + \lim_{N \rightarrow \infty^{+}} \lim_{\tau \rightarrow \infty^{+}} \int_{N + \frac{1}{2}-i \tau}^{N + \frac{1}{2}+i \tau}I(s,z) \ ds,\nonumber \\
&= -\sum_{n=0}^{\infty} \underset{s=\beta+1-\gamma+n}{\text{Res}}I(s,z) - \sum_{n=0}^{\infty} \underset{s=\beta+n}{\text{Res}}I(s,z) \nonumber \\
&\hspace{80pt} + \lim_{\tau \rightarrow \infty^{+}} \lim_{N \rightarrow \infty^{+}} \int_{N + \frac{1}{2}-i \tau}^{N + \frac{1}{2}+i \tau}I(s,z) \ ds,\nonumber \\
&= -\sum_{n=0}^{\infty} \underset{s=\beta+1-\gamma+n}{\text{Res}}I(s,z) - \sum_{n=0}^{\infty} \underset{s=\beta+n}{\text{Res}}I(s,z) + \lim_{\tau \rightarrow \infty^{+}} 0. \nonumber \end{align}
We compute the residues to find,
\begin{align} \varphi(\beta,\gamma;z) &= \frac{\Gamma(\gamma-1)}{\Gamma(\beta)} z^{1-\gamma} \sum_{n=0}^{\infty} \frac{(\beta+1-\gamma)_{n}z^{n}}{(2-\gamma)_{n}n!} + \frac{\Gamma(1-\gamma)}{\Gamma(\beta+1-\gamma)} \sum_{n=0}^{\infty} \frac{(\beta)_{n}z^{n}}{(\gamma)_{n}n!}, \nonumber \end{align}
for $z \in \widetilde{\Sigma}_{-1}$ and $\widetilde{\Sigma}_{0}$ and the desired result is proved. \end{proof}

\begin{remark} \label{remark:subtle} Continuing with the issue raised in Remark \ref{remark:sub}, the fact is that integrating along the rectangle $R$ to the left of the imaginary axis is only able to produce an asymptotic result because we do not have uniform convergence with respect to $N$ in the integrals (\ref{eq:important}). This is to be expected, since we know $\varphi(\beta,\gamma;z)$ is analytic on sectors $\widetilde{\Sigma}_{-1}$ and $\widetilde{\Sigma}_{0}$, it certainly cannot be equal to a divergent $\ _{2}F_{0}$ series. However, when integrating along the rectangle $R'$ to the right of the imaginary axis we produce an equality with a linear combination of convergent series, namely this is the analytic continuation of the solutions at $z=\infty$ to $z=0$. This is shown in (\ref{eq:important2}), because the integrals here converge as $\tau\rightarrow \infty^{+}$ uniformly with respect to large $N$. \end{remark}

We conclude these computations by using Proposition \ref{proposition:kpp} to prove the formulae (\ref{eq:kummers-1})-(\ref{eq:kummerc0}) of Lemma \ref{lemma:stokes}. 

\begin{proofyyy} Recall from the definitions (\ref{eq:true2}) and (\ref{eq:true1}) of solutions,
\[\tilde{y}_{2}^{(\infty,0)}(z) = -\varphi(\beta,\gamma;z) \quad \text{and} \quad \tilde{y}_{1}^{(\infty,0)}(z) = e^{i \pi (\beta-\gamma)} e^{z} \varphi \left(\gamma-\beta, \gamma ; e^{- i \pi} z \right), \quad \quad z \in \widetilde{\Sigma}_{0}.\]
Let $\gamma_{\infty,0}$ be a curve as described at the beginning of this subsection. Proposition \ref{proposition:kp} shows how to represent the solutions of Kummer equation (\ref{eq:kummer}) around $z=\infty$ using a Mellin-Barnes integral. Due to the analyticity of this integral, as shown in the first part of the proof of Proposition \ref{proposition:kp}, Proposition \ref{proposition:kpp} provides the formula for the analytic continuation of these solutions to $z=0$. That is to say,
\begin{align} \gamma_{\infty,0} \left[\tilde{y}_{2}^{(\infty,0)}\right](z) = -\frac{\Gamma(\gamma-1)}{\Gamma(\beta)} \tilde{y}_{1}^{(0)}(z) - \frac{\Gamma(1-\gamma)}{\Gamma(\beta+1-\gamma)} \tilde{y}_{2}^{(0)}(z). \nonumber \end{align}
By manipulating the parameters and variable as follows: $\beta \mapsto \gamma-\beta$, $\gamma \mapsto \gamma$, $z \mapsto e^{i \pi}z$, we also deduce,
\begin{align} \gamma_{\infty,0}\left[\tilde{y}_{1}^{(\infty,0)}\right](z) &= e^{i \pi (\beta-\gamma)} \frac{\Gamma(\gamma-1)}{\Gamma(\gamma-\beta)} e^{-i \pi (1-\gamma)} z^{1-\gamma}e^{z} \ _{1}F_{1} \left(\begin{array}{c} 1-\beta \\ 2-\gamma \end{array} ; -z \right) \nonumber \\
& \quad \quad + e^{i \pi (\beta-\gamma)} \frac{\Gamma(1-\gamma)}{\Gamma(1-\beta)} e^{z} \ _{1}F_{1} \left(\begin{array}{c}\gamma-\beta \\ \gamma \end{array} ; -z \right). \nonumber \end{align}
After applying Kummer transformation, 
\begin{align} e^{z} \ _{1}F_{1} \left(\begin{array}{c} a \\ c \end{array} ; -z \right) \equiv \ _{1}F_{1} \left(\begin{array}{c} c-a \\ c \end{array} ;z \right), \nonumber \end{align}
we deduce the connection matrix as given in (\ref{eq:kummerc0}), namely,
\begin{align} \left( \gamma_{\infty,0} \left[\tilde{y}_{1}^{(\infty,0)}\right](z) , \ \gamma_{\infty,0} \left[\tilde{y}_{2}^{(\infty,0)}\right](z) \right) = \left(\tilde{y}_{1}^{(0)}(z) , \ \tilde{y}_{2}^{(0)}(z) \right) \ \widetilde{C}^{0 \infty}, \nonumber \end{align}
where,
\begin{align} \widetilde{C}^{0 \infty} = \left(\begin{array}{cc} e^{i \pi (\beta-1)} \frac{\Gamma(\gamma-1)}{\Gamma(\gamma-\beta)} & -\frac{\Gamma(\gamma-1)}{\Gamma(\beta)} \\ e^{i \pi (\beta-\gamma)} \frac{\Gamma(1-\gamma)}{\Gamma(1-\beta)} & -\frac{\Gamma(1-\gamma)}{\Gamma(\beta+1-\gamma)} \end{array} \right). \nonumber \end{align}

We now turn our attention to proving the formulae (\ref{eq:kummers-1}) for Stokes matrices. By Definition \ref{definition:kumst} of the Stokes matrices $\widetilde{S}_{k}$ and by the asymptotic behaviour (\ref{eq:chgasy}) of the fundamental solutions $\widetilde{Y}^{(\infty,k)}(z)$, we have, 
\[\left(\begin{array}{cc} z^{\beta-\gamma} e^{z} & 0 \\ 0 & z^{1-\beta} \end{array} \right) \widetilde{S}_{k} \left(\begin{array}{cc} z^{\gamma-\beta} e^{-z} & 0 \\ 0 & z^{\beta-1}\end{array}\right) \sim I, \quad \text{as } z \rightarrow \infty, \ \text{arg}(z) - k \pi \in \left(\frac{\pi}{2} , \frac{3\pi}{2}\right).\]
From this relation we easiy deduce that $\widetilde{S}_{-1}$ is lower triangular and $\widetilde{S}_{0}$ is upper triangular, both with unit diagonals. Denote by $\tilde{s}_{-1}$ and $\tilde{s}_{0}$ the $(2,1)$ and $(1,2)$ elements of the matrices $\widetilde{S}_{-1}$ and $\widetilde{S}_{0}$ respectively. With the knowledge of the connection matrix $\widetilde{C}^{0 \infty}$, we use the cyclic relation (\ref{eq:cycrel}) as follows,
\begin{align} &\widetilde{C}^{\infty 0} e^{2 \pi i \widetilde{\Theta}_{0}} \widetilde{C}^{0 \infty} = \left(\widetilde{S}_{-1}\right)^{-1} e^{-2 \pi i \widetilde{\Theta}_{\infty}} \left(\widetilde{S}_{0}\right)^{-1}, \nonumber \\
&\quad \Leftrightarrow \left(\begin{array}{cc} e^{2 \pi i (\beta-\gamma)} & \frac{-2 \pi i e^{- i \pi \gamma}}{\Gamma(\beta)\Gamma(\beta+1-\gamma)} \\ \frac{-2 \pi i e^{2 \pi i (\beta-\gamma)}}{\Gamma(1-\beta)\Gamma(\gamma-\beta)} & 1 - e^{2 \pi i (\beta-\gamma)} + e^{2 \pi i (1-\gamma)} \end{array} \right) \nonumber \\
&\quad \hspace{80pt} = \left(\begin{array}{cc} 1 & 0 \\ -\tilde{s}_{-1} & 0 \end{array} \right) \left(\begin{array}{cc} e^{2 \pi i (\beta-\gamma)} & 0 \\ 0 & e^{2 \pi i (1-\beta)} \end{array} \right) \left(\begin{array}{cc} 1 & -\tilde{s}_{0} \\ 0 & 1 \end{array}\right), \nonumber \\
&\quad \Leftrightarrow \left\{ \begin{matrix*}[l] \tilde{s}_{-1} = \frac{2 \pi i}{\Gamma(1-\beta)\Gamma(\gamma-\beta)} , \\ \tilde{s}_{0} = \frac{2 \pi i}{\Gamma(\beta)\Gamma(\beta+1-\gamma)}e^{i \pi(\gamma-2\beta)}, \end{matrix*} \right. \end{align}
which are indeed the Stokes multipliers found in the formulae (\ref{eq:kummers-1}) for the Stokes matrices. \end{proofyyy}

\begin{remark} If we had chosen to normalise the monodromy data of Kummer equation with respect to the fundamental solution $\widetilde{Y}^{(\infty,-1)}(z)$ then the signs of the exponents in $\widetilde{C}^{0 \infty}$ would be inverted. Furthermore, the monodromy matrix around infinity would change as $\widetilde{M}_{\infty} \mapsto \widetilde{S}_{0}^{-1} \widetilde{M}_{\infty}\widetilde{S}_{0}$. \end{remark}

\subsubsection{Gevrey Asymptotics and a result of Ramis and Martinet} 
We close this subsection about Kummer confluent hypergeometric differential equation by examining Gevrey asymptotics and stating a result of Ramis and Martinet \cite{ramis}. This also gives us the opportunity to show a contempory approach to the theory of Stokes phenomenon, which we have learned from \cite{balser, vanderput}. The contents of this additional subsection will not be necessary for our main theorems in Section \ref{sec:hgconf}, we include it for the curiosity of the reader. \\

We recall some definitions and facts regarding asymptotic theory. In the following, keep in mind that the role of the letter $k$ will mirror the concept of a linear differential equation having a pole of Poincar\'{e} rank $k$, so that for Kummer equation we are specifically concerned with $k=1$. Denote by $\mathbb{C}[[z^{-1}]]$ the field of formal series in $z^{-1}$. 
\begin{definition} \label{definition:gevreyasy} Let $f$ be a function analytic in a sector $\widetilde{\Sigma}$. We say that $f$ has the series $\widehat{f}=\sum_{n= 0}^{\infty}f_{n}z^{-n} \in \mathbb{C}[[z^{-1}]]$ as its Gevrey asymptotic expansion of order $k^{-1}$ as $z \rightarrow \infty$, $z \in \widetilde{\Sigma}$, denoted $f \simeq_{\frac{1}{k}} \widehat{f}$, if for every closed subsector $\sigma$ of $\widetilde{\Sigma}$, there exists a constant $K>0$ such that, for all $N \in \mathbb{N}$ and $z \in \sigma$,
\begin{eqnarray} \left| z^{N} \left(f(z) - \sum_{n =0}^{N-1}f_{n}z^{-n} \right) \right| \leq K^{N} \Gamma \left(1+\frac{N}{k}\right). \label{eq:gevrey} \end{eqnarray}
We denote by $\mathcal{A}_{\frac{1}{k}}(\widetilde{\Sigma})$ the set of analytic functions on $\widetilde{\Sigma}$ which have a Gevrey asymptotic expansion of order $k^{-1}$. \end{definition}	
Gevrey asymptotics is a stronger defintion than the usual one of Poincar\'{e} because it specifies how the right hand side of the inequality (\ref{eq:gevrey}) depends on $N$. In Poincar\'{e}'s definition of an asymptotic series the precise dependence on $N$ is not relevant. If we denote by $\mathcal{A}(\widetilde{\Sigma})$ the set of analytic functions on a sector $\widetilde{\Sigma}$ which admit an asymptotic expansion then we have, \begin{align} \mathcal{A}(\widetilde{\Sigma}) \supset \mathcal{A}_{1}(\widetilde{\Sigma}) \supset \mathcal{A}_{\frac{1}{2}}(\widetilde{\Sigma}) \supset \mathcal{A}_{\frac{1}{3}}(\widetilde{\Sigma}) \supset \ldots, \label{eq:recall} \end{align}
since the asymptotic expansion (\ref{eq:gammaasy}) of the Gamma function implies:
\[\frac{\Gamma \left(1+\frac{N}{k+1} \right)}{\Gamma \left(1+\frac{N}{k}\right)} \rightarrow 0 \text{ as } N \rightarrow \infty.\]

We note that, if $f \in \mathcal{A}_{\frac{1}{k}}(\widetilde{\Sigma})$, with $f \simeq_{\frac{1}{k}} \sum_{n =0}^{\infty}f_{n}z^{-n}$, then these coefficients satisfy $|f_{n}| < K^{n}\Gamma \left(1+\frac{n}{k}\right)$, for some positive constant $K$ and $n \geq 1$. To see this, we add the following inequalities: 
\begin{align} \left| f(z) - \sum_{n=0}^{N-1} f_{n}z^{-n} \right| &\leq |z|^{-N}K^{N} \Gamma \left( 1 + \frac{N}{k} \right), \nonumber \\
\left| f(z) - \sum_{n=0}^{N} f_{n}z^{-n}\right| &\leq |z|^{-N-1} K^{N+1}\Gamma\left(1+\frac{N+1}{k}\right), \nonumber \end{align}
to obtain the following inequality for $f_{N}$, 
\[\left| f_{N} \right| \leq K^{N} \Gamma \left(1+\frac{N}{k} \right) + |z|^{-1} K^{N+1} \Gamma \left(1+\frac{N+1}{k } \right),\]
from which we immediately find the claimed property by taking the limit $z \rightarrow \infty$. This motivates the following definition.
\begin{definition} We call a series $\widehat{f} = \sum_{n =0}^{\infty} f_{n}z^{-n} \in \mathbb{C}[[z]]$ a Gevrey series of order $k^{-1}$ if there exists a positive constant $K$ such that, $|f_{n}| < K^{n}\Gamma \left( 1 + \frac{n}{k}\right)$ for all $n \geq 1$. We denote by $\mathbb{C}[[z]]_{\frac{1}{k}}$ the set of all Gevrey series of order $k^{-1}$. \end{definition}
Consider the map $J : \mathcal{A}_{\frac{1}{k}}(\widetilde{\Sigma}) \rightarrow \mathbb{C}[[z]]_{\frac{1}{k}}$ which maps an analytic function $f$ on the sector $\widetilde{\Sigma}$ to its Gevrey asymptotic expansion of order $k^{-1}$. We recall the following result, see for instance \cite{balser,vanderput}. 
\begin{theorem} \label{theorem:bv} Assume $k>\frac{1}{2}$. The set $\mathcal{A}_{\frac{1}{k}}(\widetilde{\Sigma})$ is a differential algebra and the map $J$ is a homomorphism. Moreover, if the sector $\widetilde{\Sigma}$ has an opening less than $\frac{\pi}{k}$, then $J$ is surjective, otherwise, if $\widetilde{\Sigma}$ has an opening greater than $\frac{\pi}{k}$, then $J$ is injective. \end{theorem}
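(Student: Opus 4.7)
The plan is to establish each of the four assertions separately, handling algebraic closure first, then the Borel--Ritt--Gevrey-type surjectivity, and finally the Watson-type injectivity. Throughout I would work with closed subsectors $\sigma \subset \widetilde{\Sigma}$, since the defining inequality (\ref{eq:gevrey}) is stated on these, and exploit the observation (recorded just before the statement) that every $f \in \mathcal{A}_{1/k}(\widetilde{\Sigma})$ has Gevrey coefficients bounded by $|f_n| \leq K^n \Gamma(1+n/k)$.

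For the differential algebra structure, closure under sums is immediate by adding the two defining estimates. For products I would write
\[
(fg)(z) - \sum_{n=0}^{N-1}(\hat f\,\hat g)_n z^{-n} = \Big(f(z) - \sum_{n<N}f_n z^{-n}\Big)g(z) + \sum_{n<N} f_n z^{-n}\Big(g(z) - \sum_{m<N-n}g_m z^{-m}\Big),
\]
and reduce the required Gevrey bound to the convolution estimate $\sum_{j=0}^{N}\Gamma(1+j/k)\Gamma(1+(N-j)/k) \leq C^N \Gamma(1+N/k)$, which follows from Stirling by splitting the sum at $j = N/2$. Closure under differentiation comes from the Cauchy integral formula on a disk of radius $c|z|$ contained in a slightly larger subsector, which upgrades a Gevrey bound on $f$ to one on $f'$ of the same order; termwise differentiation of the asymptotic series then gives $J(f') = (Jf)'$, so $J$ is an algebra homomorphism commuting with $d/dz$.

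For surjectivity when the opening of $\widetilde{\Sigma}$ is less than $\pi/k$, I would invoke the Borel--Ritt--Gevrey construction. Given a Gevrey series $\hat f = \sum f_n z^{-n}$, form the $k$-Borel transform $\mathcal{B}_k\hat f(\zeta) = \sum_{n\geq 0} f_n \zeta^{n}/\Gamma(1+n/k)$, which by the Gevrey hypothesis is analytic in a disk around $\zeta=0$. Define $f(z)$ by a truncated $k$-Laplace integral of $\mathcal{B}_k\hat f$ along a ray whose bisector lies in $\widetilde{\Sigma}$; the condition $\mathrm{op}(\widetilde{\Sigma})<\pi/k$ ensures that, after the substitution $w = z^k$, the exponential kernel $\exp(-(\zeta/z)^k)$ stays decaying on a half-plane, giving convergence and the Gevrey asymptotic expansion $\hat f$. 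As an alternative one can use a truncation $f(z) = \sum_n f_n \chi_n(z) z^{-n}$ with cut-offs $\chi_n$ vanishing when $|z|$ is small compared to $n^{1/k}$; the remainder is then controlled termwise via Stirling.

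For injectivity when the opening exceeds $\pi/k$, suppose $f \simeq_{1/k} 0$ on $\widetilde{\Sigma}$. On any closed subsector $\sigma$ the estimate $|f(z)| \leq K^N \Gamma(1+N/k)|z|^{-N}$ holds for every $N$; optimising in $N$ by choosing $N \sim (|z|/K')^{k}$ and applying Stirling yields the exponential decay bound $|f(z)| \leq A \exp(-B|z|^k)$ on $\sigma$. Pulling back by $w = z^k$ (which is allowed since $k>\tfrac12$ keeps $\pi/k<2\pi$, so $\widetilde{\Sigma}$ maps to a genuine sector of opening $>\pi$ in one sheet), the function $f$ acquires exponential decay on a sector of opening exceeding $\pi$ and is of finite order; Phragm\'en--Lindel\"of then forces $f \equiv 0$. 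The main technical obstacle I anticipate is the Borel--Ritt--Gevrey step, where the precise tuning of either the cut-offs $\chi_n$ or the truncation length in the $k$-Laplace integral is delicate; this is exactly the place where the hypothesis $k>\tfrac12$ is used, and I would appeal to the detailed treatments in Balser or van der Put--Singer to handle these estimates rigorously.
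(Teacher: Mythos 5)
The paper does not prove this statement: Theorem \ref{theorem:bv} is explicitly \emph{recalled} from the literature (``see for instance \cite{balser,vanderput}''), so there is no internal proof to compare against. Your sketch is essentially the standard textbook argument from exactly those references, and its three pillars are all sound: the differential-algebra part via the convolution bound $\sum_{j=0}^{N}\Gamma(1+j/k)\Gamma(1+(N-j)/k)\leq (N+1)\Gamma(1+N/k)$ (which follows even more directly from $\Gamma(1+a)\Gamma(1+b)\leq\Gamma(1+a+b)$ than from Stirling) together with the Cauchy-integral trick on nested subsectors for closure under $d/dz$; Borel--Ritt--Gevrey for surjectivity on narrow sectors; and the optimisation $N\sim(|z|/K')^{k}$ giving $|f(z)|\leq A e^{-B|z|^{k}}$ followed by Phragm\'en--Lindel\"of for injectivity on wide sectors. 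Two minor corrections. First, your Laplace kernel is written with the wrong variable: for expansions in $z^{-1}$ as $z\to\infty$ the decaying kernel is $\exp\left(-(\zeta z)^{k}\right)$ (equivalently $\exp\left(-(\zeta/t)^{k}\right)$ with $t=1/z\to 0$), not $\exp\left(-(\zeta/z)^{k}\right)$, which tends to $1$. Second, the hypothesis $k>\tfrac12$ is not where the Borel--Ritt--Gevrey construction becomes delicate -- surjectivity holds for every $k>0$; it is there so that $\pi/k<2\pi$ and a planar sector of opening greater than $\pi/k$ actually exists, i.e.\ so that the injectivity half of the statement is non-vacuous without passing to the Riemann surface of the logarithm. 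With those two points adjusted, your outline is a faithful reconstruction of the proof in the cited sources.
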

This remarkable theorem draws the connection between Gevrey asymptotics and Stokes phenomenon. Given a formal Gevrey series of order $k^{-1}$, this theorem shows that there is a unique analytic function on a sector of opening greater than $\frac{\pi}{k}$ which has that series as its Gevrey asymptotic expansion of order $k^{-1}$. Observe that this is exactly parallel to the theory of Stokes phenomenon: given a differential equation with a pole of Poincar\'{e} rank $k$ and a formal fundamental series solution at that point, there are unique analytic fundamental solutions on a sectors of openings greater than $\frac{\pi}{k}$ with the prescribed formal series as their asymptotic expansions. \\

Let $\varphi(\beta,\gamma;z)$ be defined as in (\ref{eq:mb}). Ramis and Martinet prove the following result.
\begin{theorem} \label{theorem3:ramis} The function $z^{a} \varphi(a,c;z)$ has $_{2}F_{0} \left(a,a+1-c;-z^{-1}\right)$ as its Gevrey asymptotic expansion of order one as $z \rightarrow \infty$ with $|\text{arg}(z)| < \frac{3\pi}{2}$. Similarly, $(-z)^{c-a} \varphi(c-a,c;-z)$ has $_{2}F_{0}\left(c-a,1-a;z^{-1}\right)$ as its Gevrey asymptotic expansion of order one with $|\text{arg}(-z)|<\frac{3\pi}{2}$. \end{theorem}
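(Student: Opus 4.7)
The plan is to revisit the Mellin--Barnes representation of $\varphi(a,c;z)$ from \eqref{eq:mb} and, instead of merely tracking an $\mathcal{O}(|z|^{-N-1/2-a})$ remainder as in Proposition \ref{proposition:kp}, to produce a quantitative estimate on the remainder of the form $C\,K^{N}\,\Gamma(N+1)\,|z|^{-N-\operatorname{Re}(a)}$, uniformly on every closed subsector of $|\arg z|<\tfrac{3\pi}{2}$. Once this is in place, multiplication by $z^{a}$ turns the partial sum into the desired $\sum_{n=0}^{N-1}\frac{(a)_{n}(a+1-c)_{n}}{n!}(-z^{-1})^{n}$ and the remainder bound becomes precisely the Gevrey-$1$ estimate \eqref{eq:gevrey} with $k=1$.

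First I would fix $N\geq 1$ and deform the contour of \eqref{eq:mb} past the first $N$ poles $s=0,-1,\dots,-(N-1)$ of $\Gamma(s)$ to the vertical line $\operatorname{Re}(s)=-N+\tfrac{1}{2}$. Exactly as in the proof of Proposition \ref{proposition:kp} (closing via the rectangle $R$ with $\tau\to\infty^{+}$), Cauchy's theorem gives
\begin{equation*}
\varphi(a,c;z)=z^{-a}\sum_{n=0}^{N-1}\frac{(a)_{n}(a+1-c)_{n}}{n!}(-z)^{-n}+R_{N}(z),\quad R_{N}(z)=\frac{1}{2\pi i}\int_{-N+\tfrac{1}{2}-i\infty}^{-N+\tfrac{1}{2}+i\infty}\!\!I(s,z)\,ds.
\end{equation*}
The analogous contour-closure argument (valid for $|\arg z|<\tfrac{3\pi}{2}$, as the bounds in Proposition \ref{proposition:kp} already show) justifies the vanishing of the horizontal pieces, so the identity holds throughout the full sector.

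The heart of the proof is the bound on $R_{N}(z)$. Parametrising $s=-N+\tfrac{1}{2}+iy$, I would apply Euler's reflection formula to the problematic $\Gamma(s)$ factor:
\begin{equation*}
|\Gamma(-N+\tfrac{1}{2}+iy)|=\frac{\pi}{\cosh(\pi y)\,|\Gamma(N+\tfrac{1}{2}-iy)|}.
\end{equation*}
Combining this with Stirling's asymptotics \eqref{eq:gammaasy} applied separately to $\Gamma(a+N-\tfrac{1}{2}-iy)$, $\Gamma(a+1-c+N-\tfrac{1}{2}-iy)$ and $\Gamma(N+\tfrac{1}{2}-iy)$, the ratio of Gamma functions factors into a purely $N$-dependent piece and a purely $y$-dependent piece up to polynomial corrections: the $N$-dependent piece is $\Gamma(N+1)\,N^{\,2\operatorname{Re}(a)-\operatorname{Re}(c)-1}$, while the $y$-dependent exponential factors collapse to $e^{-3\pi|y|/2}\,(1+|y|)^{C_{0}}$ for large $|y|$ (the $\cosh(\pi y)$ contributes $e^{-\pi|y|}$ and Stirling contributes two copies of $e^{-\pi|y|/2}$ from the two ``large'' Gamma functions, against one copy of $e^{+\pi|y|/2}$ from the denominator $\Gamma(N+\tfrac{1}{2}-iy)$). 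Finally $|z^{s-a}|=|z|^{-N+\tfrac{1}{2}-\operatorname{Re}(a)}\,e^{-y\arg z}$, so on any closed subsector $|\arg z|\leq \tfrac{3\pi}{2}-\delta$ the $y$-integral is controlled by $\int e^{-(3\pi/2-|\arg z|)|y|}(1+|y|)^{C_{0}}\,dy$, which is uniformly bounded.

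Assembling these estimates yields $|R_{N}(z)|\leq C\,K^{N}\,\Gamma(N+1)\,|z|^{-N-\operatorname{Re}(a)}$ on the subsector, whence
\begin{equation*}
\left|\,z^{N}\Big(z^{a}\varphi(a,c;z)-\sum_{n=0}^{N-1}\frac{(a)_{n}(a+1-c)_{n}}{n!}(-z^{-1})^{n}\Big)\,\right|\leq C\,K^{N}\,\Gamma(N+1),
\end{equation*}
which is exactly Definition \ref{definition:gevreyasy} with $k=1$ for the expansion $_{2}F_{0}(a,a+1-c;-z^{-1})$. The second statement then follows from the first: applying it with $(a,c)\mapsto(c-a,c)$ and $z\mapsto -z$ (and checking that the condition $|\arg z|<\tfrac{3\pi}{2}$ transforms into $|\arg(-z)|<\tfrac{3\pi}{2}$) gives the Gevrey-$1$ asymptotic $(-z)^{c-a}\varphi(c-a,c;-z)\simeq_{1}{}_{2}F_{0}(c-a,1-a;z^{-1})$.

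The main obstacle I anticipate is the careful bookkeeping in the Stirling estimate: one must track the interaction between the $N\to\infty$ and $|y|\to\infty$ regimes uniformly so that the global majorant splits as (factorial in $N$)$\times$(integrable in $y$). The delicate point is that three Gamma functions contribute exponential factors in $|y|$ that must combine with $\cosh(\pi y)$ to dominate $e^{-y\arg z}$ strictly inside the sector; this is precisely where the opening $\tfrac{3\pi}{2}$ (and no more) enters, matching the Ramis--Sibuya threshold $\pi/k$ for $k=1$.
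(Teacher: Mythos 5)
The paper itself does not prove Theorem \ref{theorem3:ramis}: it is quoted as a result of Ramis and Martinet with a citation to \cite{ramis}, so there is no internal proof to compare against. Your proposal is nevertheless the natural one in the context of this paper: it is exactly the quantitative sharpening of the third step of the proof of Proposition \ref{proposition:kp} and of Remark \ref{remark:sub}, where the authors stop at a Poincar\'e-type remainder $\mathcal{O}(|z|^{-N-\frac{1}{2}-\beta})$ for each fixed $N$ and explicitly note that the $N$-dependence of the constants is what prevents them from saying more. Tracking that $N$-dependence through the shifted Mellin--Barnes contour is the standard route to the Gevrey-$1$ statement, the reduction of the second claim to the first via $(a,c)\mapsto(c-a,c)$, $z\mapsto -z$ is correct, and the residue bookkeeping matches the paper's.

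There is, however, one step that fails as literally written: the claim that the $y$-dependence of the integrand on the line $\operatorname{Re}(s)=-N+\tfrac{1}{2}$ collapses to $e^{-3\pi|y|/2}(1+|y|)^{C_{0}}$ \emph{uniformly in $N$}. The estimate (\ref{eq:behavior}) (equivalently, Stirling applied to $\Gamma(a+N-\tfrac12-iy)$ etc.) only yields the factor $e^{-\pi|y|/2}$ per large-argument Gamma once $|y|\gg N$; in the intermediate range $1\ll|y|\lesssim N$ one has $|\Gamma(N+u-iy)|\approx\Gamma(N+u)e^{-y\arctan(y/N)}$, so the net decay from the three Gamma factors and $\cosh(\pi y)$ is only about $e^{-\pi|y|-cy^{2}/N}$ rather than $e^{-3\pi|y|/2}$. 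Consequently, for $\pi<|\arg z|<\tfrac{3\pi}{2}-\delta$ the factor $e^{-y\arg z}$ is \emph{not} dominated pointwise in that range, and the $y$-integral is not uniformly bounded: it picks up an extra contribution of size roughly $e^{c(\delta)N}$ from $|y|$ of order $N$. The theorem survives because this extra factor is still geometric in $N$ and can be absorbed into the $K^{N}$ of Definition \ref{definition:gevreyasy}, but your proof must say so explicitly --- the split ``(factorial in $N$) times (integrable in $y$)'' has to be replaced by ``(factorial in $N$) times $K^{N}$ times (integrable in $y$)'', with the middle factor coming precisely from the region $|y|\sim N$. A second, minor point: with the contour at $\operatorname{Re}(s)=-N+\tfrac12$ you obtain $|R_{N}(z)|\lesssim K^{N}\Gamma(N+1)|z|^{-N+\frac12-\operatorname{Re}(a)}$, not $|z|^{-N-\operatorname{Re}(a)}$; this half-power offset is harmless for Gevrey-$1$ (reindex $N\mapsto N+1$ or place the contour at $-N-\tfrac12$ as the paper does), but the bound should not be stated with the stronger power.
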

We have seen in the first part of the proof of Proposition \ref{proposition:kp} that $\varphi(a,c;z)$ and $\varphi(c-a,c;-z)$ are analytic in the sectors $\widetilde{\Sigma}_{-1}$ and $\widetilde{\Sigma}_{0}$. In particular, since these sectors have openings greater than $\pi$, Theorem \ref{theorem:bv} states that the map $J : \mathcal{A}_{1} \left( \widetilde{\Sigma}_{+} \right) \rightarrow \mathbb{C}[[z]]_{1}$ is injective. In other words, there are unique analytic functions on these sectors which have the formal series solutions,
\begin{align} z^{-a} \ _{2}F_{0} \left(a,a+1-c;-z^{-1}\right) \quad \text{and} \quad (-z)^{a-c} e^{z} \ _{2}F_{0}\left(c-a,1-a;z^{-1}\right), \label{eq:fs} \end{align}
as their Gevrey asymptotic expansions of order $1$. Since we have seen that Gevrey asymptotics imply asymptotics in the usual sense, recall (\ref{eq:recall}), this implies that such analytic functions on these sectors are in fact solutions to Kummer equation (\ref{eq:kummer}), by the uniqueness statement in Theorem \ref{theorem:kummerst}. Since the formal series solutions (\ref{eq:fs}) are clearly linearly independent, Ramis and Martinet's Theorem shows that the functions,
\[\varphi(a,c;z) \quad \text{and} \quad e^{z} \varphi(c-a,c;-z),\]
constitute a fundamental set of solutions of Kummer equation. Compared with our proof of this fact, stated as Proposition \ref{proposition:kp}, it is satisfying to deduce this from a different perspective.

\end{document}